\newcommand{\N}{\mathbb{N}}
\newtheorem{theorem}{Theorem}[section]
\newtheorem{lemma}[theorem]{Lemma}
\newtheorem{corollary}[theorem]{Corollary}
\newtheorem{prop}[theorem]{Proposition}
\newtheorem{definition}[theorem]{Definition}
\newtheorem{remark}[theorem]{Remark}
\let\orgdescriptionlabel\descriptionlabel
\renewcommand*{\descriptionlabel}[1]{%
  \let\orglabel\label
  \let\label\@gobble
  \phantomsection
  \edef\@currentlabel{#1}%
  \let\label\orglabel
  \orgdescriptionlabel{#1}%
}
\renewcommand{\P}{\mathbb{P}}
\newcommand{\x}{\boldsymbol{x}}
\newcommand{\y}{\boldsymbol{y}}
\newcommand{\1}{\mathbbm{1}}
\newcommand{\G}{\mathscr{G}}
\newcommand{\scrG}{\G}
\newcommand{\scrV}{\mathscr{V}}
\newcommand{\scrE}{\mathscr{E}}
\newcommand{\X}{\boldsymbol{X}}
\newcommand{\Y}{\boldsymbol{Y}}
\newcommand{\E}{\mathbb{E}}
\newcommand{\0}{\mathbf{o}}
\renewcommand{\o}{\0}
\newcommand{\R}{\mathbb{R}}
\renewcommand{\d}{\mathrm{d}}
\newcommand{\Z}{\mathbb{Z}}
\newcommand{\cG}{\mathcal{G}}
\newcommand{\scrM}{\mathscr{M}}
\newcommand{\p}{\mathbf{p}}
\newcommand{\deff}{\delta_\textup{eff}}
\newcommand{\cL}{\mathcal{L}}
\newcommand{\cF}{\mathcal{F}}
\newcommand{\cE}{\mathcal{E}}
\newcommand{\scrL}{\mathscr{L}}
\newcommand{\wlc}{\widehat{\lambda}_{\textsf{c}}}
\newcommand{\lc}{\lambda_{\textsf{c}}}
\newcommand{\eps}{\varepsilon}
\title{Subcritical annulus crossing in spatial random graphs}
\author{
Emmanuel Jacob\thanks{\'Ecole Normale Sup\'erieure de Lyon, 46 all\'ee d'Italie, 69007 Lyon, France} \\ emmanuel.jacob@ens-lyon.fr
\and
Benedikt Jahnel \orcidlink{0000-0002-4212-0065}\thanks{Technische Universit\"at Braunschweig, Universit\"atsplatz 2, 38106 Braunschweig, Germany}\thanksgap{0.4ex} \thanks{Weierstrass Institute for Applied Analysis and Stochastics, Anton-Wilhelm-Amo-Str.\ 39, 10117 Berlin, Germany} \\ benedikt.jahnel@tu-braunschweig.de
\and
Lukas L\"{u}chtrath \orcidlink{0000-0003-4969-806X}\thanksmark{2}\\lukas.luechtrath@wias-berlin.de \\
}
\date{March 16, 2026}
\begin{document}

\maketitle

\begin{spacing}{0.9}
\begin{abstract} 
 \noindent We consider general continuum percolation models obeying sparseness, translation invariance, and spatial decorrelation. In particular, this includes models constructed on general point sets other than the standard Poisson point process or the Bernoulli-percolated lattice. Moreover, in our setting the existence of an edge may depend not only on the two end vertices but also on a surrounding vertex set and models are included that are not monotone in some of their parameters. 
 We study the critical \emph{annulus-crossing} intensity \(\widehat{\lambda}_{\rm c}\), which is smaller or equal to the classical critical percolation intensity \(\lambda_{\rm c}\) and derive a condition for \(\widehat{\lambda}_{\rm c}>0\) by relating the crossing of annuli to the occurrence of long edges. This condition is sharp for models that have a modicum of independence. In a nutshell, our result states that annuli are either not crossed for small intensities or crossed by a single edge. Our proof rests on a multiscale argument that further allows us to directly describe the decay of the annulus-crossing probability with the decay of long edges probabilities. 

 \smallskip
 
 \noindent We apply our result to a number of examples from the literature. Most importantly, we extensively discuss the \emph{weight-dependent random connection model} in a generalised version, for which we derive sufficient conditions for the presence or absence of long edges that are typically easy to check. These conditions are built on a decay coefficient \(\zeta\) that has recently seen some attention due to its importance for various proofs of global graph properties.

 \medskip
\noindent\footnotesize{{\textbf{AMS-MSC 2020}: Primary: 60K35; Secondary: 90B15, 05C80}

\medskip
\noindent\textbf{Key Words}: Phase transition, Euclidean diameter, weight-dependent random connection model, Boolean model, long-range percolation, interference graphs, ellipses percolation, worm percolation, Cox point processes, strong decay regime}
\end{abstract}
\end{spacing}

\pagebreak

\tableofcontents

\section{Introduction and main results}\label{sec:intro}
The standard objects studied in continuum percolation theory are random graphs \(\G\) on the points of a homogeneous Poisson point process on \(\R^d\) of intensity \(\lambda>0\). The spatial embedding of the vertices enters the connection probability in a way that spatially close vertices are connected by an edge with a higher probability than those farther apart. This is typically done in a monotone manner, i.e., increasing \(\lambda\) leads to more edges on average.

The central question in percolation theory is whether there exists a non-trivial critical Poisson intensity \(\lc\in(0,\infty)\) such that the connected component or \emph{cluster} of the origin (added to the graph if necessary) is infinite with a positive probability for all \(\lambda>\lc\), but is finite almost surely for \(\lambda<\lc\). We call the regime \((0,\lc)\) the \emph{subcritical (percolation) phase} and \((\lc,\infty)\) the \emph{supercritical (percolation) phase}. The ergodicity of the model ensures the almost-sure existence of an infinite cluster in \(\G\) if \(\lambda>\lc\) and the almost-sure absence of such a cluster if \(\lambda<\lc\), respectively. Further, an existing infinite cluster is typically (but not always) unique~\cite{BurtonKeane89,chebunin_Last_2024_uniqueness}. 

In dimensions \(d\geq 2\) percolation models often contain a supercritical phase~\cite{MeesterRoy1996} and proving a non-trivial phase transition \(\lc\in(0,\infty)\) reduces to showing \(\lc>0\). As an example, consider the sparse Boolean model in which each vertex is the centre of a ball with an independent and identically distributed volume of finite expectation. Each pair of vertices is connected by an edge if their associated balls intersect. In order to show \(\lc>0\) for this model, Gou\'{e}r\'{e} introduced the alternative critical \emph{annulus-crossing} intensity \(\wlc\)~\cite{Gouere08}, which is the smallest \(\lambda\) for which the probability of finding a path starting in the centred ball \(B(r)\) and ending outside the ball \(B(2r)\) is bounded from zero for all \(r\). Put differently, for \(\lambda>\wlc\), annuli of radii \(r\) and \(2r\) are crossed. By monotonicity \(\wlc\leq \lc\), and the existence of a subcritical annulus-crossing phase implies a subcritical percolation phase. Despite the elegance of the proof of \(\wlc>0\), another advantage in looking at annuli is that the method immediately quantifies the decay of the annulus-crossing probability and further deduces the tail of the Euclidean diameter of a typical subcritical cluster. In recent works, the equality of \(\wlc\) and \(\lc\) for the Boolean model has been studied~\cite{AhlbergTassionTeixeira2018,DCopinRaoufiTassion2020,DembinTassion2022} and \(\wlc=\lc\) was shown with some technical exceptions if the balls are generated by Pareto distributed radii~\cite{DembinTassion2022}.

We build on the work in~\cite{Gouere08} and study annulus crossings for general spatial random graphs on the vertices of stationary point processes. We show that \(\wlc>0\) whenever correlations are weak at large distances and the occurrence of certain long edges is unlikely for small intensities. On the contrary, if long edges are likely for all intensities then \(\wlc=0\). Our results show that the existence of a subcritical annulus-crossing phase is strongly related to the long-range properties of a model. In fact, strong long-range effects can remove a subcritical annulus-crossing phase entirely. Indeed, in many situations, either annuli are crossed with a single edge or are not crossed at all for small intensities. Put differently, \(\wlc=0\) is often equivalent to the presence of `long edges'. However, the long edges alone can typically not remove the subcritical percolation phase. Unlike in the Boolean model, in models with many long edges both intensities are therefore not always the same. More precisely, some of the models we consider have
\[0=\wlc<\lc,\]
however it is unclear whether any of our models could feature the property $0<\wlc<\lc$. Let us mention that we do not require the graphs to be monotone in \(\lambda\) and \(\wlc\) may not be the unique phase transition. However, \(\wlc>0\) still implies \(\lc>0\) and our results give an easy condition for the existence of a subcritical phase. 

We apply our results to an extensive list of examples from the literature. The most important model class we discuss is a generalisation of the \emph{weight-dependent random connection model}~\cite{GHMM2022}. This class contains many well-established models with heavy-tailed degree distributions and/or long-range interactions, and is an active field of research. We derive `easy checkable' conditions for \(\wlc>0\) and \(\wlc=0\), respectively. These conditions rest on quantifying, on a polynomial scale, the amount of vertices located in large balls being incident to edges equally long as the ball's radius. Although slightly differently derived, as we allow for more correlated models, the found exponent coincides with the one introduced in~\cite{JorritsmaKomjathyMitsche2023,jorritsmaKomjathyMitsche2023_LRP,jorritsmaKomjathyMitsche2024LDP} to describe the cluster-size decay in large but finite supercritical components when many long edges are present. The exponent extends and specifies the notion of the \emph{effective decay exponent} introduced~\cite{GraLuMo2022,Moench2024}, which makes a model comparable to classical long-range percolation~\cite{Schulman1983} and has for instance been used to identify the existence of supercritical percolation phases in one dimension~\cite{GraLuMo2022} or to prove continuity of the percolation function in regimes with many long edges~\cite{Moench2024}. In conclusion, all these works combined underline the significance of long-edges and long-range interactions on a graphs connectivity properties.   

We explain our general framework in Section~\ref{sec:framework} and formulate our main results in Section~\ref{sec:mainResults}. We discuss the weight-dependent random connection model in detail in Section~\ref{sec:wdrcm}. In Section~\ref{sec:correlatedExamples} we exemplary discuss additional and more correlated examples. Finally, we give the proofs of our results in Section~\ref{sec:proofs}. Throughout the manuscript, we use the standard Landau notation as well as the convention that \(f\asymp g\) stands for \(f=\Theta(g)\). We further denote by \(\sharp A\) the cardinality of a countable set \(A\) and by \(|x|\) the Euclidean norm of \(x\in\R^d\). Additional notation used in the manuscript is introduced where it is needed first.  
 
 \subsection{Framework} \label{sec:framework}
 In this section, we define the basic properties of all considered graphs. We aim to study countably infinite (undirected) random graphs embedded into \(\R^d\) in a wide spectrum that not only contains the classical models of continuum percolation but models from other contexts as well. More precisely, we consider a graph \(\scrG=(\scrV,\scrE)\) whose vertices are given via a simple, stationary point process on \(\R^d\) of intensity \(\lambda>0\). Edges are drawn with probabilities depending on the vertices locations, the point cloud around the vertices and additional features of the vertices. Let us first give the basic properties of \(\scrG\) that we will always assume for all models throughout the manuscript without further mentioning them.  
 
 \begin{description}
 	\item[(G1)\label{G:Point_process}] 
 		The locations of the vertices in \(\scrV\) are given by a \emph{locally finite, stationary, and simple point process} on \(\R^d\) of intensity \(\lambda>0\). That is, the intensity measure is the Lebesgue measure multiplied by \(\lambda\)~\cite{LastPenrose2017}. Note that this refers to the location of the vertices only and the vertices may carry additional marks or weights. In fact, in most examples, vertex marks are needed to model a vertex' attraction or influence in some way. Throughout the manuscript, we denote vertices by \(\x\in\scrV\). For a vertex \(\x\in\scrV\) we denote its location by \(x\in\R^d\). Although \(\scrV\) refers to the whole vertex set and may contain additional markings, we still write \(\x\in\scrV\cap A\) for a vertex \(\x\) with location \(x\in A\subset\R^d\).  
 	\item[(G2)\label{G:Translation}] 
 		We assume that \(\G\) is \emph{translation invariant}. That is, \(\G\) and \(\G+x\) have the same distribution for each \(x\in\R^d\), where \(\G+x\) is the graph constructed on the points of \(\scrV+x\). That is, the connection mechanism does not change if the location of each vertex is shifted by \(x\). 	
 	\item[(G3)\label{G:locallyFinite}] 
 		The graph \(\scrG\) is \emph{locally finite}, i.e.\ all vertices have finite degree, almost surely. 
 \end{description}
 
 We denote the underlying probability measure by \(\P_\lambda\) and write \(\E_\lambda\) for the associated expectation operator.
 
 \begin{remark} \label{rem:Assumptions} ~\
 	\begin{enumerate}[(i)]
 		\item 
 			Our setup can easily be adapted to vertex sets based on infinite subsets of \(\Z^d\). In that case, translation invariance is with respect to shifts \(x\in\Z^d\) and the subset is to be generated via some stationary site-percolation process. Here, the intensity is simply given as \(\lambda:=\P(o\in \scrV)\). The standard example is Bernoulli site-percolation with retention parameter \(\lambda\in(0,1]\) \cite{Grimmett1999}. We will discuss other and more correlated examples in Section~\ref{sec:correlatedExamples}. We only require one technical assumptions for our proof to work. Instead of working with the Euclidean norm, we have to work under the norm \(||\cdot||_\infty\) on the lattice in order to guarantee appropriate scalings of annulus boundaries. However, due to equivalence of norms, this causes no lack of generality. To keep notation concise, we will only use the point process notation and the Euclidean norm throughout the manuscript. We comment in Remark~\ref{rem:proofMain} after the proof of our main theorem on the changes that have to be made when replacing the point process with a percolated lattice. 
 		\item 
 			The assumption that the graph is locally finite, i.e.\ Property~\ref{G:locallyFinite}, is not strictly necessary to formulate our results. Indeed, our main assumptions concerning long-range effects (cf.\ Definitions~\ref{def:mixing} and~\ref{def:longEdges} below) imply Property~\ref{G:locallyFinite} whenever a subcritical annulus-crossing phase exists. Nonetheless, we retain local finiteness as a standing assumption, both for clarity of exposition and because our motivation and results are primarily meaningful in this setting. We also note that local finiteness of both, the graph and the underlying vertex set, entails that connection probabilities decay with distance: vertices that are close together are, in general, more likely to be connected than an otherwise comparable pair separated by a much larger distance. This decay, however, need not be monotone. 		
 	\end{enumerate}   	
\end{remark}
 	
Another graph property satisfied by many models in the literature is \emph{monotonicity} in \(\lambda\). Although, we already mentioned that not all of the considered models are monotone, we shall see that the monotonicity property allows for stronger statements. We call an event \(\mathcal{E}\) \emph{increasing} if for all realisations \(\omega\in\mathcal{E}\) and any other realisation \(\omega'\) with \(\scrV(\omega)\subset \scrV(\omega')\) and \(\scrE(\omega)\subset\scrE(\omega')\), we have \(\omega'\in\mathcal{E}\). Here, \(\scrV(\omega)\) denotes the vertex set and \(\scrE(\omega)\) the edge set of \(\omega\). 
 		We say that \(\scrG\) is \emph{monotone} if 
 	\begin{equation*} \tag{Mon}\label{G:monotone}
 		\text{ for all }\lambda<\lambda' \text{ and any increasing event }\mathcal{E}\text{, we have } \P_\lambda(\mathcal{E})\leq \P_{\lambda'}(\mathcal{E}).
 	\end{equation*} 	
 
 \paragraph{Properties related to long-range effects.}	
 Having dealt with the more standard properties, we next discuss more specific properties subject to long-range effects of the graph. This aspect is crucial for our main result relating the presence or absence of long edges to the non-existence or existence of a subcritical annulus-crossing phase. Let us first specify some notation and properly introduce the critical annulus-crossing intensity. To this end, denote by \(B(x,r)\) the Euclidean ball of radius \(r\) centred at \(x\), where \(B(r)=B(o,r)\) represents the ball centred at the origin $o$. Denote further by \(B(x,r)^\textsf{c}=\R^d\setminus B(x,r)\) the complement of a ball and let \(\x\sim \y\) be the event that vertices in \(\scrV\) located at \(x\) and \(y\) are connected by an edge in \(\scrG\). The event that \(\x\) and \(\y\) are connected by a self-avoiding path is denoted by \(\x\xleftrightarrow[]{} \y\). Similarly, 
 	\[
 		A\xleftrightarrow[]{} B \quad :\Leftrightarrow \quad  \exists \x\in A\cap \scrV, \y\in B\cap\scrV\colon \x\xleftrightarrow[]{}\y
 	\] 
 	is the event that two measurable sets \(A\) and \(B\) are connected by a path, meaning there exists a vertex in each respective set that lies on the same path. We define the \emph{critical annulus-crossing intensity} by 
 	\begin{equation*}
 		\wlc = \inf\big\{\lambda>0\colon \limsup_{r\to\infty}\P_{\lambda}\big(B(r)\xleftrightarrow[]{}B(2r)^\textsf{c}\big)>0\big\}.
 	\end{equation*}
	If \(\wlc>0\), the defining probability converges to zero for all \(\lambda<\wlc\). However, if the graph is \emph{not monotone}, this does not imply that the limit of said probability is bounded from zero for all \(\lambda>\wlc\). In fact, there could in principle be another phase \((\lambda_1,\lambda_2)\) such that \(\wlc<\lambda_1<\lambda_2\leq\infty\) and \(\P_{\lambda}\big(B(r)\xleftrightarrow[]{}B(2r)^\textsf{c}\big)\to 0\) for all \(\lambda\in(\lambda_1,\lambda_2)\). Monotonicity in the sense of~\eqref{G:monotone} on the other hand clearly guarantees \(\wlc\) to be the unique such critical point.

	Finally, let us introduce the main properties of \(\scrG\) that quantify the long-range interactions. Typically, there are two ways in which these interactions can arise. One is through the presence of long edges in the graph that connect vertices far apart from each other. The other is via correlations of the local configurations of the graph across distant regions. Naturally, both factors can strongly influence the crossings of certain annuli. While the main scope of this manuscript lies on the effects of long edges, we also deal with correlations of the latter type for full generality. More precisely, let us denote by \(\scrG\cap B(x,r)\) the subgraph of \(\scrG\) induced by the vertices located in \(B(x,r)\), and define the \emph{local annulus-crossing} event by 
	\begin{equation}\label{eq:local_annulus_cross}
		\cG(x,r) := \big\{B(x,r)\xleftrightarrow[]{} B(x,2r)^\textsf{c} \text{ in }\scrG\cap B(x,3r)\big\}.
	\end{equation}
	In other words, the annulus with inner and outer radii \(r\) and \(2r\) centred in \(x\) is crossed by the graph using only vertices and edges within \(B(x,3r)\). We abbreviate \(\cG(o,r)=\cG(r)\). For our purpose, we require the correlations between two such local annulus-crossings to be sufficiently weak at greater distances.
    
    \begin{definition}[Mixing] \label{def:mixing} 
		We say that \(\scrG\) is \emph{mixing} for \(\lambda>0\) if, for all \(|x|\ge 10\), we have 
  		\begin{equation*}\tag{\(\scrM\)}\label{G:lambda-mix}
  		 	\lim_{r\to\infty}\big|\operatorname{Cov}_\lambda\big(\1_{\cG(r)}, \1_{\cG(xr,r)}\big)\big| = 0.
  		\end{equation*}
  		We denote this property by \(\scrM_\lambda\). We say that the graph is \emph{uniformly mixing} for \(\lambda>0\), denoted by \(\overline{\scrM}_\lambda\), if, for any \(|x|\ge 10\), we have
  		\begin{equation*}\tag{\(\overline{\scrM}\)}\label{G:mixing}	            \lim_{r\to\infty}\sup_{\lambda'<\lambda}\big|\operatorname{Cov}_{\lambda'}\big(\1_{\cG(r)}, \1_{\cG(xr,r)}\big)\big| = 0.
  		\end{equation*}
  		In some situations we require a polynomial decay of the correlations. If this is the case, we say that \(\scrG\) is \emph{polynomially mixing} for \(\lambda>0\) with exponent \(\xi=\xi_\lambda<0\) if, for any \(|x|\ge 10\), we have
  		\begin{equation*}\tag{\({{\mathscr{P}}\!\!{\mathscr{M}}}\)}\label{G:P_mixing}
 		\limsup_{r\to\infty}\frac{\log\big|\operatorname{Cov}_{\lambda}\big(\1_{\cG(r)}, \1_{\cG(xr,r)}\big)\big|}{\log r}\leq \xi
 		\end{equation*}
 		and denote this property by \({{\mathscr{P}}\!\!{\mathscr{M}}}_\lambda^{\xi}\). 
	\end{definition}
    Let us mention that \(|x|\geq 10\) in the definition ensures that the balls of radius \(3r\) subject to the two local annulus crossing events are disjoint and of order \(r\) apart. This is the important property for our proof to work and the particular choice \(|x|\geq 10\) could be replaced by any other bound guaranteeing it. 
    
    Many important models from the literature actually have the following independence assumptions in their construction, which immediately yields~\eqref{G:monotone} as well as~\ref{G:mixing}\(_\lambda\) and~\ref{G:P_mixing}\(_\lambda^\xi\) for all $\lambda>0$ and $\xi\ge -\infty.$

    \begin{definition}[Independent setting]\label{def:IndS} 
		We call the following setup the \emph{independent setting}: 
		Let the underlying point process have independent increments and is thus either given by a Poisson point process, cf.~\cite[Thm.~6.12]{LastPenrose2017}, or by a Bernoulli site-percolated lattice. Further, for a given collection of mutually different vertex pairs \(\{\x_{1_1},\x_{1_2}\},\dots,\{\x_{k_1},\x_{k_2}\}\), we have 
	\begin{equation*}\tag{Ind}\label{eq:independentEdges}
\P_\lambda^{\x_{1_1},\x_{1_2},\dots,\x_{k_2}}\Big(\bigcap_{j=1}^k \{\x_{i_1}\sim \x_{i_2}\} \, \Big| \, \scrV \Big) = \prod_{i=1}^k \mathbf{p}(\x_{i_1},\x_{i_2}), 
	\end{equation*} 
	for a suitable, symmetric function \(\mathbf{p}\). Here, \(\P_\lambda^{\x_{1_1},\x_{1_2},\dots,\x_{k_2}}\) denotes the law where the given points \(\x_{1_1},\dots,\x_{k_2}\) have been added to the graph in the Poisson point process case, or the law conditioned on the event that said vertices survived the site percolation, respectively. 
	\end{definition}

    We stress that the above definition can be seen as combining three independence properties:
    \begin{itemize}
        \item The independent increments property of the vertex set itself.
        \item Given the vertex set, the independence of the occurrence of the edges \(\{\x_{i_1}\sim \x_{i_2}\}\) for every unordered pair of vertices \(\{\x_{i_1},\x_{i_2}\}\).
        \item Given an unordered pair of vertices \(\{\x_{i_1},\x_{i_2}\}\) in the vertex set, the independence of the probability \(\mathbf{p}(\x_{i_1},\x_{i_2})\) of observing the edge from the surrounding vertex set.
    \end{itemize}

    The independent setting implies a natural coupling between the graph \(\scrG_\lambda\) and \(\scrG_{\lambda'}\) for $\lambda'<\lambda,$ where \(\scrG_{\lambda'}\) is obtained from \(\scrG_\lambda\) by performing a site percolation with retention parameter $\lambda'/\lambda$. In other words, we obtain \(\scrG_{\lambda'}\) by keeping each vertex of \(\scrG_\lambda\) independently with probability $\lambda'/\lambda,$ and further keeping all existing edges between the remaining vertices. In this coupling \(\scrG_{\lambda'}\) is a subgraph of \(\scrG_{\lambda}\), and we thus immediately obtain that the graph model is monotone.

    The independent setting also implies both~\ref{G:mixing}\(_\lambda\) and~\ref{G:P_mixing}\(_\lambda^{-\infty}\) for all \(\lambda>0\), as for \(|x|>2\) the local annulus crossing events \(\cG(r)\) and \(\cG(xr,r)\) are then independent and thus uncorrelated.

    However, the independent setting does not allow for models on correlated vertex sets, cf.\ Sections~\ref{sec:Cox} and~\ref{sec:FRI}, or models where the presence of an edge depends on vertices surrounding its end vertices, cf.~Section~\ref{sec:BoolInterferences}. In all these cases our results rely on the mixing assumptions~\ref{G:mixing}\(_\lambda\) or~\ref{G:P_mixing}\(_\lambda^\xi\).  
    	
	Let us finally introduce properly the notion of long edges and their occurrence. For the scope of the manuscript this is arguably the most interesting property as its influence on \(\wlc\) is studied in greatest detail. 
		
	\begin{definition} \label{def:longEdges}
	We define for all \(r>0\) and \(c>0\) the \emph{presence of long edges} event as
		\begin{equation}\label{eq:longEdges}
			\mathcal{L}(r,c):=\big\{\exists\ \x\sim \y\colon |x|<r, |x-y|>cr \big\}.
		\end{equation} 
		That is, we can find an edge of length at least \(cr\) in \(\scrG\) with one endpoint being located in \(B(r)\). We say that \(\scrG\) has 
		\begin{equation*}\tag{\(\scrL\)}\label{G:noLongEdge}
			\text{Property } \mathscr{L}_\lambda \quad \text{ if } \quad \limsup_{r\to\infty}\P_\lambda(\mathcal{L}(r,c))=0 \quad \text{ for all }c>0.  
		\end{equation*}
		If Property \(\scrL_{\lambda'}\) is \emph{uniformly} fulfilled for all \(\lambda'\leq \lambda\), we say that the graph has Property \(\overline{\scrL}_\lambda\). More precisely, \(\scrG\) has 
		\begin{equation*}\tag{\(\overline{\scrL}\)} \label{G:noLongEdgeUnif}
			\text{Property } \overline{\scrL}_\lambda \quad \text{ if }\quad \limsup_{r\to\infty}\sup_{\lambda'\leq \lambda}\P_{\lambda'}(\mathcal{L}(r,c))=0 \quad \text{ for all }c>0.
		\end{equation*}
	\end{definition}

	The Property~\ref{G:noLongEdge}\(_\lambda\) essentially says that finding long edges, i.e., edges of comparable length to the radius of the ball in which we search, is unlikely. Property~\ref{G:noLongEdgeUnif}\(_\lambda\) says this remains true for all smaller intensities. It can therefore be seen as a weak form of monotonicity (for small intensities). The following simple properties make this heuristic description actually quite pertinent. 
	\begin{lemma}[Properties of \ref{G:noLongEdge}\(_\lambda\)] \label{lem:PropL} ~\
		\begin{enumerate}[(i)]
			\item If the monotonicity assumption~\eqref{G:monotone} is fulfilled, then \ref{G:noLongEdge}\(_\lambda\) implies \ref{G:noLongEdgeUnif}\(_\lambda\).
			\item If \(\limsup_{r\to \infty} \P_{\lambda}(\cL(r,c))=0\) for some \(c>0\), then Property~\ref{G:noLongEdge}\(_\lambda\) holds. 
			\item If \(\limsup_{r\to \infty} \sup_{\lambda'<\lambda} \P_{\lambda}(\cL(r,c))=0\) for some \(c>0\), then Property~\ref{G:noLongEdgeUnif}\(_\lambda\) holds. 
		\end{enumerate}
	\end{lemma}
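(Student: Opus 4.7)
The plan is to handle the three parts by observing that $\cL(r,c)$ is an increasing event and then using a simple covering and translation-invariance argument to trade one value of $c$ for another.

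For part (i), the key observation is that $\cL(r,c)$ is an increasing event in the sense of \eqref{G:monotone}: adding points to $\scrV$ or edges to $\scrE$ cannot remove a witnessing pair $\x\sim\y$ with $|x|<r$ and $|x-y|>cr$. Hence, assuming \eqref{G:monotone}, $\P_{\lambda'}(\cL(r,c))\leq \P_\lambda(\cL(r,c))$ for every $\lambda'\le\lambda$, so $\sup_{\lambda'\leq\lambda}\P_{\lambda'}(\cL(r,c))=\P_\lambda(\cL(r,c))$ and \ref{G:noLongEdge}$_\lambda$ transports verbatim to \ref{G:noLongEdgeUnif}$_\lambda$.

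For parts (ii) and (iii), I would fix the value $c>0$ given by the hypothesis and let $c'>0$ be arbitrary. The case $c'\geq c$ is immediate from the inclusion $\cL(r,c')\subseteq \cL(r,c)$. For the non-trivial case $c'<c$, set $R := c'r/(2c)$ so that $cR = c'r/2 < c'r$. Cover $B(r)$ by a collection $B(z_1,R),\ldots,B(z_N,R)$ with $N=N(c,c',d)$ bounded by a constant depending only on the dimension and the ratio $c/c'$. If $\cL(r,c')$ occurs with witnessing edge $\x\sim\y$, then $x\in B(z_i,R)$ for some $i$ and $|x-y|>c'r>cR$, which is precisely the translate of $\cL(R,c)$ centred at $z_i$. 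By translation invariance \ref{G:Translation} together with a union bound,
\[
\P_{\lambda'}(\cL(r,c')) \leq N\,\P_{\lambda'}(\cL(R,c)) \quad \text{for every }\lambda'.
\]
Letting $r\to\infty$ yields $R\to\infty$, and since $N$ is independent of $r$ and $\lambda'$, the hypothesis of (ii) gives \ref{G:noLongEdge}$_\lambda$, while taking the supremum over $\lambda'\le\lambda$ before passing to the limit delivers (iii).

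The only mildly delicate point is the $c'<c$ reduction, where the natural inclusion of events goes the wrong way; the covering argument with the carefully chosen scale $R=c'r/(2c)$ is the mechanism that reverses this direction without incurring any $\lambda$-dependent constants, which is what makes the uniform statement in (iii) fall out for free.
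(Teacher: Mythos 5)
Your proposal is correct and follows essentially the same route as the paper: part (i) is immediate because \(\cL(r,c)\) is an increasing event, and parts (ii)--(iii) use the inclusion \(\cL(r,c')\subset\cL(r,c)\) for \(c'\ge c\) together with a covering of \(B(r)\) by boundedly many smaller balls, translation invariance~\ref{G:Translation}, and a union bound, with the covering count independent of \(r\) and \(\lambda'\). The only cosmetic difference is your choice of scale \(R=c'r/(2c)\) with a factor-two slack, where the paper takes radius \(\varepsilon r\) with \(\varepsilon=c'/c\) so that \(c'r=c\varepsilon r\) exactly; both work.
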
 
	\begin{proof}
		The first part of the claim is immediate. The second and the third part can be proven simultaneously. First note that, for all \(c'>c\), we clearly have \(\mathcal{L}(r,c')\subset\mathcal{L}(r,c)\) and thus
		 \(
		 	\P_\lambda(\cL(r,c'))\leq \P_\lambda(\cL(r,c))
		 \)  
		 for all \(\lambda\). Consider now \(c'<c\). We can cover the ball \(B(r)\) with finitely many, say \(C\), balls of radius \(\varepsilon r\), where \(\varepsilon=c'/c\). Note that \(C\) does not depend on \(r\). Using a union bound and translation invariance~\ref{G:Translation} yields
		\[
			\P_\lambda(\mathcal{L}(r,c'))\leq C \, \P_\lambda(\exists \ \x\sim \y: |x|<\varepsilon r, |x-y|>c' r) = C \, \P_\lambda (\mathcal{L}(\varepsilon r, c)),
		\]
		for all \(\lambda\). This concludes the proof.
	\end{proof}  

	An important consequence of the claim is that one only has to check the limit of \(\P_{\lambda}(\cL(r,1))\) in order to decide whether the Properties~\ref{G:noLongEdge}\(_\lambda\) and~\ref{G:noLongEdgeUnif}\(_\lambda\) hold or not.

	\subsection{Main results}\label{sec:mainResults}
	Having established the framework and all necessary properties, we have now everything in place to formulate our main results relating Property~\ref{G:noLongEdgeUnif}\(_\lambda\) with the positivity of \(\wlc\) for a mixing graph.
	
	\pagebreak[3]
	
	\begin{theorem}[Existence of subcritical annulus-crossing phase]\label{thm:main} ~\
		\begin{enumerate}[(i)]
			\item If the uniform mixing Property~\ref{G:mixing}\(_\lambda\) and the no-long-edges Property~\ref{G:noLongEdgeUnif}\(_\lambda\) hold for some \(\lambda>0\), then \(\wlc>0\).
			\item If on the contrary Property~\ref{G:noLongEdge}\(_\lambda\) does not hold for any \(\lambda>0\), then \(\wlc=0\).
		\end{enumerate}	
	\end{theorem}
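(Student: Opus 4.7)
I split the proof by direction. For claim (ii), suppose Property~\ref{G:noLongEdge}\(_\lambda\) fails for some \(\lambda>0\), so there exist \(c,\delta>0\) and \(r_n\to\infty\) with \(\P_\lambda(\cL(r_n,c))\geq \delta\). On the event \(\cL(r_n,c)\), the witnessing edge \(\x\sim\y\) with \(|x|<r_n\) and \(|x-y|>cr_n\) by itself produces an annulus crossing around \(x\) between radii \(cr_n/4\) and \(cr_n/2\). Covering \(B(r_n)\) by \(N=O((1/c)^d)\) balls of radius \(cr_n/4\) and combining translation invariance~\ref{G:Translation} with a union bound gives \(\P_\lambda(B(r_n^\ast)\leftrightarrow B(2r_n^\ast)^{\textup c})\geq\delta/N\) for \(r_n^\ast=cr_n/4\to\infty\). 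Hence \(\wlc\leq\lambda\); since the hypothesis is supposed to hold for every \(\lambda>0\), this forces \(\wlc=0\).

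The plan for claim (i) is a Gouéré-type multiscale induction tailored to the mixing setup. Fix \(\lambda>0\) satisfying~\ref{G:mixing}\(_\lambda\) and~\ref{G:noLongEdgeUnif}\(_\lambda\), a large integer \(K\), a base scale \(r_0\), and write \(r_n:=K^n r_0\) and \(\pi_{\lambda'}(r):=\P_{\lambda'}(B(r)\leftrightarrow B(2r)^{\textup c})\). The geometric heart is the following waypoint observation: any self-avoiding path from \(B(r)\) to \(B(2r)^{\textup c}\) whose edges all have length at most \(r/(10K)\) is contained in \(B(3r)\), and by walking along it one extracts two of its vertices \(x_1,x_2\) with \(|x_1-x_2|\geq 10\,r/K\) such that both local annulus-crossings \(\cG(x_1,r/K)\) and \(\cG(x_2,r/K)\) of~\eqref{eq:local_annulus_cross} occur. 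Covering \(B(3r)\) by an \(O(K^d)\)-grid of candidate centres, a union bound combined with the covariance bound from~\ref{G:mixing}\(_\lambda\) then yields, uniformly in \(\lambda'\leq\lambda\),
\begin{equation*}
  \pi_{\lambda'}(r_{n+1})\;\leq\;\P_{\lambda'}\bigl(\cL(3r_{n+1},\tfrac{1}{30K})\bigr)+CK^{2d}\,\pi_{\lambda'}(r_n)^2+\varepsilon^\scrM_n,
\end{equation*}
where \(\varepsilon^\scrM_n\to0\) by~\ref{G:mixing}\(_\lambda\) and the first summand vanishes uniformly by~\ref{G:noLongEdgeUnif}\(_\lambda\).

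To start the iteration I exploit sparsity in place of monotonicity: Markov's inequality applied to the number of vertices in \(B(r_0)\) together with~\ref{G:Point_process} gives \(\pi_{\lambda'}(r_0)\leq \lambda'\,|B(r_0)|\), so picking \(\lambda^\ast\in(0,\lambda]\) small enough forces \(\sup_{\lambda'\leq\lambda^\ast}\pi_{\lambda'}(r_0)\) below \(1/(4CK^{2d})\); simultaneously choosing \(r_0\) large drives all error terms below the same threshold. An elementary induction on the recursion then delivers \(\sup_{\lambda'\leq\lambda^\ast}\pi_{\lambda'}(r_n)\to 0\) geometrically along the subsequence \((r_n)\), and intermediate radii \(r\in(r_n,r_{n+1}]\) are handled by comparison on an enlarged ball; this gives \(\wlc\geq\lambda^\ast>0\). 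The main obstacle I anticipate is calibrating the waypoint lemma so that the extracted local crossings live in balls \(B(x_i,3r/K)\) on which~\ref{G:mixing}\(_\lambda\) is applicable, i.e.\ disjoint and with centres at distance at least \(10\,r/K\), while keeping the combinatorial factor \(CK^{2d}\) and the edge-length threshold \(r/(10K)\) mutually compatible and the entire bookkeeping uniform in \(\lambda'\leq\lambda^\ast\) without the aid of monotonicity in the connection mechanism.
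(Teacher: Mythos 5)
Your proposal is essentially the paper's own argument: part~(ii) is the same one-long-edge observation (the paper shortcuts the covering step by invoking Lemma~\ref{lem:PropL} to pass to \(\cL(r,3)\), whose witnessing edge crosses the annulus directly), and part~(i) is the same Gou\'er\'e-type renormalisation with the identical error decomposition — uniform long-edge term from~\ref{G:noLongEdgeUnif}\(_\lambda\), covariance term from~\ref{G:mixing}\(_\lambda\), plus a constant times the squared crossing probability at the previous scale — initialised exactly as you do via Markov's inequality on the vertex count for small intensity and closed by the same elementary iteration. The only substantive differences are parametrisation: the paper fixes the scale ratio \(10\) and anchors the two local crossings at deterministic covering points of the spheres \(\partial B(10r)\) and \(\partial B(20r)\) (a waypoint vertex within distance \(r\) of such a point, with all edges shorter than \(r\), yields \(\cG(r\cdot i,r)\) with centres automatically at distance at least \(10r\), which is precisely the calibration you flag as your remaining obstacle), and it handles intermediate radii by running the recursion uniformly over the continuum of base scales \([r_0,10r_0]\) rather than by an enlarged-ball comparison.
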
 
	
	Our result becomes strongest for models in the independent setting.  
	
	\begin{theorem}\label{thm:equiStatement}
        Assume the independent setting in the sense of Definition~\ref{def:IndS}. Then we have \(\P_1(\cL(r,1))\asymp \P_\lambda(\cL(r,1))\) for all \(\lambda>0\). Further, Property~\ref{G:noLongEdge}\(_\lambda\) does not depend on \(\lambda\) and \ref{G:noLongEdge}\(_1\) is equivalent to \(\wlc>0\).
	\end{theorem}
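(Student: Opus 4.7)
The plan is to first establish the comparison $\P_1(\cL(r,1)) \asymp \P_\lambda(\cL(r,1))$ via the thinning coupling that comes for free in the independent setting, and then deduce the two remaining statements as direct consequences of this comparison combined with Theorem~\ref{thm:main} and Lemma~\ref{lem:PropL}.

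For the comparison, I fix $\lambda' \leq \lambda$ and use the coupling described below Definition~\ref{def:IndS}: first construct $\scrG_\lambda$, then obtain $\scrG_{\lambda'}$ by independently retaining each vertex of $\scrV_\lambda$ with probability $\lambda'/\lambda$, keeping every edge between retained vertices. Because in the independent setting the retention marks can be drawn independently of the edge marks of~\eqref{eq:independentEdges}, they are independent of $\scrG_\lambda$ given $\scrV_\lambda$. Now pick any measurable selection rule $\sigma$ that, on the event $\cL(r,1)$ in $\scrG_\lambda$, outputs a distinguished long edge of $\scrG_\lambda$ (a lexicographic choice works since $\scrG_\lambda$ is locally finite). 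Conditionally on $\scrG_\lambda$, the event that both endpoints of $\sigma$ survive the thinning has probability $(\lambda'/\lambda)^2$, and when this happens $\sigma$ still witnesses $\cL(r,1)$ in $\scrG_{\lambda'}$. Integrating yields
\begin{equation*}
\P_{\lambda'}(\cL(r,1)) \ \geq\ (\lambda'/\lambda)^2 \, \P_\lambda(\cL(r,1)),
\end{equation*}
which, combined with the monotone upper bound $\P_{\lambda'}(\cL(r,1)) \leq \P_\lambda(\cL(r,1))$ and applied at $(\lambda,\lambda') = (\max(1,\mu), \min(1,\mu))$, gives $\P_1(\cL(r,1)) \asymp \P_\mu(\cL(r,1))$ for every $\mu>0$ with constants depending only on $\mu$.

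The remaining claims then follow quickly. Lemma~\ref{lem:PropL}(ii) reduces $\scrL_\lambda$ to $\limsup_{r\to\infty} \P_\lambda(\cL(r,1)) = 0$, and since the comparison preserves the vanishing of this $\limsup$, the property $\scrL_\lambda$ does not depend on $\lambda$. For the forward direction of the equivalence with $\wlc>0$, if $\scrL_1$ holds then $\scrL_\lambda$ holds for every $\lambda$; combined with monotonicity in the independent setting and Lemma~\ref{lem:PropL}(i) this upgrades to $\overline{\scrL}_\lambda$, and since the balls $B(3r)$ and $B(xr,3r)$ are disjoint whenever $|x|\ge 10$, the independence built into the model also yields $\overline{\scrM}_\lambda$, so Theorem~\ref{thm:main}(i) delivers $\wlc>0$. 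Conversely, if $\scrL_1$ fails then $\scrL_\lambda$ fails for every $\lambda$, and Theorem~\ref{thm:main}(ii) gives $\wlc=0$.

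There is no real obstacle here; the only subtlety is being explicit that the retention marks can be realised as a family of Bernoulli variables independent of everything used to construct $\scrG_\lambda$ given $\scrV_\lambda$, which is a matter of specifying the probability space. Once this bookkeeping is fixed, the selection rule and the thinning bound combine mechanically with the previously established results.
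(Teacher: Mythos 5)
Your proposal is correct and follows essentially the same route as the paper: the thinning coupling from the independent setting yields the monotone upper bound and the $(\lambda'/\lambda)^2$ lower bound (the paper states this directly, you merely make the distinguished-edge selection explicit), and the equivalence with $\wlc>0$ is then read off from Theorem~\ref{thm:main}, with the independent setting supplying the uniform mixing and, via monotonicity and Lemma~\ref{lem:PropL}, the uniform no-long-edges property.
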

 
	The above theorem essentially states that, in the independent setting, 
    each annulus is either crossed by a single edge or it is always possible to prevent the crossing by sufficiently reducing the intensity. While we have equivalence of \ref{G:noLongEdge}\(_1\) and \(\wlc>0\) in that case, we present an example in Section~\ref{sec:kNN} demonstrating that, in general, we cannot weaken either of the Properties~\ref{G:noLongEdgeUnif}\(_\lambda\) or~\ref{G:mixing}\(_\lambda\) in Part~(i) of Theorem~\ref{thm:main}. 
		
	Having established \(\wlc>0\) in the latter two theorems, we now have a closer look on the decay of the probability of the annulus-crossing event within the subcritical phase. 

	\begin{theorem} \label{thm:Diamter}
		Assume \(\wlc>0\), and that, for some \(\lambda<\wlc\), we have the \emph{polynomially mixing} Property~\ref{G:P_mixing}\(_{\lambda}^\xi\) for some \(\xi=\xi_\lambda<0\), as well as the existence of some $\zeta=\zeta_\lambda<0$ such that	
		\[
			\limsup_{r\to\infty}\frac{\log \P_\lambda(\cL(r,1))}{\log r}\le \zeta.
		\] 
		Then, we have 
		\[
			\limsup_{r\to\infty}\frac{\log \P_{\lambda}\big(B(r)\xleftrightarrow[]{}B(2r)^{\textsf{c}}\big)}{\log r}\leq \zeta \vee \xi.
		\]
	\end{theorem}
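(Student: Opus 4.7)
The plan is to derive a self-bounding multiscale inequality for $f_\lambda(r) := \P_\lambda(B(r) \xleftrightarrow[]{} B(2r)^{\textsf{c}})$ that separates the contribution of long edges from the contribution of short-edge crossings, and then iterate it starting from the information $f_\lambda(r) \to 0$ (which follows from $\lambda < \wlc$) in order to extract the desired polynomial decay rate. The first ingredient is a geometric decomposition lemma in the spirit of Gou\'er\'e: for a suitably chosen small constant $c > 0$ and large constant $K$, any configuration realising $\{B(r) \xleftrightarrow[]{} B(2r)^{\textsf{c}}\} \setminus \cL(3r, c)$ contains two local annulus crossings $\cG(z_1, r/K)$ and $\cG(z_2, r/K)$ centred at grid points $z_1, z_2 \in B(3r)$ with $|z_1 - z_2| \ge 10 \cdot r/K$. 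Indeed, in the absence of long edges the connecting path consists of edges shorter than $cr$ and must traverse the annulus $B(2r) \setminus B(r)$ of width $r$ while remaining in $B(3r)$, so two crossings of sub-annuli of radius $r/K$ situated well apart along the path can be extracted. A union bound over the $O(K^{2d})$ relevant pairs of centres, together with translation invariance~\ref{G:Translation}, yields
\[
 f_\lambda(r) \le C \, \P_\lambda(\cL(3r, c)) + C_K \sup_{|z_1 - z_2| \ge 10 r/K} \P_\lambda\bigl(\cG(z_1, r/K) \cap \cG(z_2, r/K)\bigr).
\]

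Next, I would apply the polynomial mixing assumption~\ref{G:P_mixing}$_\lambda^\xi$ together with the inclusion $\cG(\rho) \subset \{B(\rho) \xleftrightarrow[]{} B(2\rho)^{\textsf{c}}\}$ and translation invariance to obtain
\[
 \P_\lambda\bigl(\cG(z_1, r/K) \cap \cG(z_2, r/K)\bigr) \le f_\lambda(r/K)^2 + (r/K)^{\xi + o(1)}.
\]
Combined with the polynomial decay $\P_\lambda(\cL(3r, c)) \le r^{\zeta + o(1)}$, obtained from the hypothesis on $\cL(r, 1)$ via Lemma~\ref{lem:PropL}, this leads to the recursion
\[
 f_\lambda(r) \le C_1 \, r^{(\zeta \vee \xi) + o(1)} + C_K \, f_\lambda(r/K)^2.
\]
To close the argument, I exploit that $\lambda < \wlc$ entails $f_\lambda(r) \to 0$, so for any prescribed $\epsilon > 0$ there exists $r_0$ beyond which $f_\lambda(r/K)^2 \le \epsilon \, f_\lambda(r/K)$. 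Writing $g(r) := f_\lambda(r) \, r^{-(\zeta \vee \xi) - \delta}$ for an arbitrarily small $\delta > 0$, the recursion becomes $g(r) \le C_1' + \mu \, g(r/K)$ with $\mu := C_K \epsilon K^{-(\zeta \vee \xi) - \delta}$. Choosing $\epsilon$ so small that $\mu < 1$ (feasible since $K^{-(\zeta \vee \xi) - \delta}$ is a fixed constant), iterating down to the base scale $r_0$ gives $g(r) \le C_1'/(1 - \mu) + \mu^{\lfloor \log_K(r/r_0) \rfloor} g(r_0) = O(1)$, which, letting $\delta \to 0$, is equivalent to the stated $\limsup \log f_\lambda(r)/\log r \le \zeta \vee \xi$.

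The main obstacle I anticipate is making the geometric decomposition lemma fully rigorous: namely, showing that every crossing in $\{B(r) \xleftrightarrow[]{} B(2r)^{\textsf{c}}\} \setminus \cL(3r, c)$ yields two well-separated local annulus crossings at scale $r/K$, supported by a number of relevant pairs of centres that is bounded independently of $r$. This requires a careful covering/pigeonhole argument over the traversed annulus and the identification of sub-paths lying in the respective small balls $B(z_i, 3r/K)$. The constants $c$, $K$, and the shrinkage factor must be tuned in concert so that neither the $O(K^{2d})$ combinatorial overhead nor the $(r/K)^\xi$ covariance factor erodes the targeted rate $\zeta \vee \xi$.
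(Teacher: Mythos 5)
Your proposal is correct, and its first half is essentially the paper's own strategy: the geometric decomposition you flag as the main obstacle is precisely the inclusion~\eqref{eq:FactoriseP(G)} from the proof of Theorem~\ref{thm:main} (scale factor $10$, with finite coverings $I,J$ of $\partial B(10)$ and $\partial B(20)$ playing the role of your grid), which together with translation invariance and mixing yields the recursion~\eqref{eq:renorm_f_g}, \(f(r)\le g(r)+f(r/10)^2\); the paper's proof of Theorem~\ref{thm:Diamter} simply reuses this, so no new geometric work is needed and your tuning of \(c\) and \(K\) is already settled there. Two small points you should make explicit: the covariance bound must be uniform over the pairs of centres, which holds because the rescaled displacements \((z_2-z_1)/(r/K)\) range over a finite, \(r\)-independent set of vectors of norm at least \(10\) (as with \(I,J\) in the paper), so~\ref{G:P_mixing}\(_\lambda^\xi\) applies to each and a maximum over finitely many suffices; and passing from \(\cL(r,1)\) to \(\cL(3r,c)\) preserves the exponent \(\zeta\) by the covering argument in the proof of Lemma~\ref{lem:PropL}. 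Where you genuinely diverge is in closing the recursion: the paper follows Gou\'er\'e and proves that \(r^{s-1}\P_\lambda\big(B(r)\xleftrightarrow[]{}B(2r)^{\textsf{c}}\big)\) is integrable for every admissible \(s\), by integrating the recursion, changing variables \(r\mapsto r/10\) and absorbing half of the integral, and then converts integrability into the pointwise rate using monotonicity of \(r\mapsto\P_\lambda(\cE(r))\); you instead iterate pointwise, using \(f(r)\to0\) (valid for \(\lambda<\wlc\)) to linearise \(f(r/K)^2\le\epsilon f(r/K)\) and the weight \(r^{-(\zeta\vee\xi)-\delta}\) to get a contraction with factor \(\mu=C_K\epsilon K^{-(\zeta\vee\xi)-\delta}<1\) for \(\epsilon\) small. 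Your iteration is sound (the base-scale term \(\mu^n g(r/K^n)\) stays bounded since \(r/K^n\in[r_0,Kr_0)\)), and it has the mild advantage of not requiring monotonicity of the crossing probability in \(r\); the paper's integral route, on the other hand, is the form that directly feeds into Gou\'er\'e-type statements on the tail of the Euclidean diameter.
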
 

	The theorem states in words that (under polynomial mixing and polynomial decay of \(\P_\lambda(\cL(r,1))\)) the annulus-crossing probability, defining \(\wlc\), decays at most polynomially, determined by either the rate of mixing or the rate of decay of the long-edges probability. This particularly implies that the probability of an annulus crossing and the probability of finding long edges both decay at the same speed at least whenever \(\P_\lambda(\cL(r,1))\) decays slower than \(r^{\xi}\). This is, for instance, always the case in the independent setting. 
		
	Furthermore, we note that the annulus-crossing event is closely related to the \emph{Euclidean diameter} of the components of the vertices located in \(B(r)\). Indeed, if the annulus crossing \(B(r)\xleftrightarrow[]{}B(2r)^\textsf{c}\) occurs, then at least one vertex located in \(B(r)\) belongs to a component of Euclidean diameter no less than \(r\). The result becomes strongest, when neither \(\zeta\) nor \(\xi\) depend on \(\lambda\) and the respective properties hold through the whole subcritical annulus-crossing phase.
	
	\paragraph{Relation to subcritical percolation.} 
	Let us relate our result to the \emph{classical percolation phase transition} defined by	
	\[
		\lc=\inf \big\{\lambda>0\colon \P_{\lambda}\big(\exists \text{ an infinite connected component in }\scrG)>0\big\}.
	\]
	Again, \(\lc\) is only guaranteed to be the unique such phase transition if the graph is monotone with respect to $\lambda$. Clearly, any existing infinite connected component must intersect infinitely many annuli, due to stationarity and local finiteness; hence \(\wlc\leq \lc\). 

    \begin{corollary}[Existence of a subcritical percolation phase]
		If the uniform mixing Property~\ref{G:mixing}\(_\lambda\) and the no-long-edges Property~\ref{G:noLongEdgeUnif}\(_\lambda\) hold for some \(\lambda>0\), then \(\lc>0\). 
	\end{corollary}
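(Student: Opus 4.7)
The plan is to derive the corollary directly from Theorem~\ref{thm:main}(i) together with the general inequality $\wlc \le \lc$ already recorded in the paragraph preceding the statement.

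The first step is to apply Theorem~\ref{thm:main}(i) to the intensity $\lambda$ provided by the hypothesis. Its required assumptions, namely the uniform mixing Property~\ref{G:mixing}$_\lambda$ and the no-long-edges Property~\ref{G:noLongEdgeUnif}$_\lambda$, are exactly those granted by the corollary, so the theorem immediately yields $\wlc>0$.

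The second step is to justify $\wlc \le \lc$, which then closes the argument. Fix any $\lambda' > \lc$; by ergodicity, at intensity $\lambda'$ the graph $\scrG$ almost surely contains an infinite connected component. By stationarity~\ref{G:Translation} together with~\ref{G:Point_process}, the probability that $B(r)$ contains at least one vertex lying in this infinite component is bounded below by a positive constant uniformly in $r$ (in fact it tends to one as $r\to\infty$). On this event, the local finiteness assumption~\ref{G:locallyFinite} guarantees a self-avoiding path in the infinite cluster starting from a vertex in $B(r)$ and leaving every bounded region, in particular reaching $B(2r)^\textsf{c}$. Hence $\limsup_{r\to\infty}\P_{\lambda'}\big(B(r)\xleftrightarrow[]{}B(2r)^\textsf{c}\big) > 0$, so $\lambda' \ge \wlc$. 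Taking the infimum over $\lambda' > \lc$ gives $\lc \ge \wlc > 0$, as required.

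There is essentially no obstacle: the content of the corollary is concentrated in Theorem~\ref{thm:main}(i) and in the elementary domination $\wlc \le \lc$. The only minor subtlety is to ensure that the infinite cluster contributes a genuine graph path reaching $B(2r)^\textsf{c}$ rather than merely existing as an abstract unbounded connected object, and this is handled immediately by the local finiteness of $\scrG$.
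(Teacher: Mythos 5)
Your overall route is the paper's route: Theorem~\ref{thm:main}(i) gives \(\wlc>0\) under the stated hypotheses, and the corollary then follows from the domination \(\wlc\leq\lc\), which the paper justifies exactly as you do in spirit (an infinite cluster must intersect \(B(r)\) with probability bounded from zero for large \(r\), and by local finiteness of the point process it contains a vertex outside \(B(2r)\), so the annulus is crossed). That part of your argument, including the remark that only finitely many vertices lie in \(B(2r)\), is fine.

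There is, however, one step that does not survive the generality of the framework: you fix an arbitrary \(\lambda'>\lc\) and claim that ``by ergodicity'' the graph at intensity \(\lambda'\) almost surely contains an infinite component. This implicitly uses monotonicity in \(\lambda\) (so that the set \(\{\lambda\colon\P_\lambda(\exists\text{ infinite component})>0\}\) is an interval above \(\lc\)), and it also invokes ergodicity; neither is assumed here --- the paper explicitly allows non-monotone models, and the vertex process is only assumed stationary, not ergodic. Without monotonicity, a given \(\lambda'>\lc\) may well have no infinite cluster at all, so your premise can fail. The repair is immediate and is what the paper's one-line argument amounts to: take any \(\lambda'\) in the defining set \(\{\lambda\colon\P_\lambda(\exists\text{ infinite component})>0\}\) (whose infimum is \(\lc\)); for such \(\lambda'\) you only need \emph{positive} probability of an infinite cluster, since then \(\P_{\lambda'}\big(B(r)\xleftrightarrow[]{}B(2r)^{\textsf{c}}\big)\geq\P_{\lambda'}\big(\text{infinite cluster meets }B(r)\big)\to\P_{\lambda'}(\exists\text{ infinite component})>0\), so \(\lambda'\geq\wlc\); taking the infimum over this set gives \(\lc\geq\wlc>0\). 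With that substitution (and dropping the unnecessary ergodicity claim) your proof is correct and coincides with the paper's.
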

    We shall see in our examples that the reverse does not generally hold true. More precisely, we present examples where \(\wlc=0\) but still \(\lc>0\). An interesting open question remains as to whether $0<\wlc<\lc$ is possible for some of the models discussed in this work.

 In the following section, we apply our main results to a special but still large class of spatial random graphs and derive adjusted conditions for the existence of subcritical phases.


	\section{The weight-dependent random connection model} \label{sec:wdrcm}
	
	The \emph{weight-dependent random connection model} (WDRCM) is a continuum percolation model that was first introduced in~\cite{GHMM2022} as a generalisation of the standard \emph{random connection model}~\cite{Penrose1991}. In the latter, the connection probability is a decreasing function of the distance between the vertices that are to be connected. In the weight-dependent version, however, vertices additionally carry i.i.d.\ vertex marks that influence the connection probability, thus modelling the vertices' attractiveness. This allows for greater inhomogeneity in the model and notably includes models with power-law degree distribution. By specifying how marks enter the connection probability, one can regain versions of many established models from the literature and we present an extensive list below in Table~\ref{tab:interPol}. 
	
	To introduce the model formally, the WDRCM is the independent setting of Definition~\ref{def:IndS} where the vertex set \(\scrV\) is a standard Poisson point process on \(\R^d\times(0,1)\); each Poisson point is located in \(\R^d\) and is independently marked uniformly on \((0,1)\). Two given vertices \(\x=(x,u_x)\) and \(\y=(y,u_y)\) are then connected with probability
	\begin{equation} \label{eq:classicPhi}
		\mathbf{p}(\x,\y) = \varphi(u_x,u_y,|x-y|^d),
	\end{equation}
	where we assume that \(\varphi:(0,1)\times(0,1)\times(0,\infty)\to[0,1]\) is
	\begin{enumerate}[(i)]
		\item symmetric and non-increasing in the first two arguments and non-increasing in the third argument and 	
		\item integrable, i.e.,
			\[
				\int_0^1 \d u \int_0^1 \d v \int_0^\infty \d r \, \varphi(u,v,r)<\infty.
			\] 
	\end{enumerate}
    Note that the basic properties \ref{G:Point_process} and \ref{G:Translation} are evident, while (ii) implies Property~\ref{G:locallyFinite}. Property~(i) entails that vertices are more likely to be connected by an edge if they are located close to each other or if they have small marks. Hence, smaller marks lead to higher degrees on average and the marks can be thought of as the \emph{inverse weights} of the vertices, giving the model its name. The analogy of a small mark being equivalent to a strong influence is clearest in the special case of the \emph{age-dependent random connection model}, for which this parametrisation was first applied~\cite{GGLM2019}. Here, marks represent the vertices' birth times, and an early birth time indicates an old, thus influential, vertex. Finally, recall that the independent setting implies monotonicity and mixing, and hence an understanding of the occurrence of long edges remains the missing ingredient to apply our main results.
		
	 \subsection{Verifying the no-long-edges condition using the effective decay exponent.} \label{sec:deff}
	 In order to apply our results to WDRCMs, we need to verify the Properties~\ref{G:noLongEdge}\(_\lambda\) and~\ref{G:noLongEdgeUnif}\(_\lambda\). In this context, we can provide relatively simple sufficient conditions for verifying or excluding these properties, based on natural exponents associated with the model. We begin with a heuristic explanation. Consider the two sets \(\mathscr{B}_1\) and \(\mathscr{B}_2\) of vertices, where \(\mathscr{B}_1\) consists of all vertices located within \(B(r)\) and \(\mathscr{B}_2\) consists of those located in the annulus \(B(3r)\setminus B(2r)\). We compute the expected number of edges between these sets using Campbell's formula as
 	\[
 		\int_{B(r)}\d x \int_{B(3r)\setminus B(2r)} \d y \int_0^1 \d u \int_0^1 \d v \, \varphi(u,v,|x-y|^d). 	
 	\]
 	Note that all vertices in \(\mathscr{B}_1\) and \(\mathscr{B}_2\) are roughly distance \(r\) apart, so we may approximate $|x-y|$ by $r$, resulting in the order
 	\[
   		r^{2d}\int_0^1 \d u \int_0^1 \d v \, \varphi(u,v,r^d).
 	\]
 	A first naive approach would be to assume a Poisson structure of the edges and, as a result, estimating the probability of finding an edge between the two regions as \[1-\exp\Big(-r^{2d}\int_0^1 \d u \int_0^1 \d v \, \varphi(u,v,r^d)\Big).\] However, an important contribution to this expectation may come from the strongest vertices ---those with lowest marks--- which are rare but, when present, may connect to many long edges. Put differently, the full expectation may not capture the likelihood of long edges adequately. We refer the reader to~\cite{GraLuMo2022} for a more detailed discussion on this matter. 
 	
 	To address this, we introduce a free parameter $\mu\in (-\infty,1)$ and instead count:
	\begin{enumerate}[(i)]
    	\item the number of \emph{edges} between `weak' vertices with marks at least $r^{-d+d\mu}$, and 
    	\item the number of `strong' \emph{vertices} with marks at most $r^{-d+d\mu}$.
	\end{enumerate}
	On average, there are of order
	\[
		I(\mu,r):=r^{2d}\int_{r^{-d+d\mu}}^1 \d u \int_{r^{-d+d\mu}}^1 \d v \, \varphi(u,v,r^d)
	\]
	such edges between weak vertices, and of order $r^{d\mu}$ such strong vertices. We then select a $\mu$ that minimises the combined contributions. More precisely, we define
 	\begin{equation}\label{eq:psi}
 		\psi(\mu) := \limsup_{r\to\infty} \frac{\log \Big(\int_{r^{-d+d\mu}}^1 \d u \int_{r^{-d+d\mu}}^1 \d v \  \varphi(u, v, r^d)\Big)}{d\log r}
 	\end{equation}
 	and
 	\begin{equation} \label{eq:zeta}
 		\zeta :=\sup\{\mu\in (-\infty,1)\colon \mu < 2+\psi(\mu)\}.
 	\end{equation}
 	It is worth noting here that \(\psi\) is a non-increasing function with values in \([-\infty,-1]\). 
  Thus, $\zeta$ is well-defined, and $I(\zeta, r)+r^{d\zeta}$ yields the smallest order in $r$. Furthermore, for any $\mu>\zeta$, we have \(r^d/(r^{-d+d\mu}-r^{-d+d\zeta})=r^{2d-d\mu}/(1-r^{d(\zeta-\mu)})\), and therefore by monotonicity of \(\varphi\) in the mark arguments
     \begin{equation}\label{eq:expectedEdges}
     	\begin{aligned}
     		r^{d}\hspace{-0.18 cm} \int\limits^1_{r^{-d+d\mu}}\hspace{-0.18 cm}\d v \ \varphi(r^{-d+d\mu},v,r^d)
     		&
     			\leq \tfrac{r^{2d-d\mu}}{1-r^{d(\zeta-\mu)}} \hspace{-0.18 cm}\int\limits_{r^{-d+d\mu}}^1 \hspace{-0.18 cm} \d v \hspace{-0.18 cm}\int\limits_{r^{-d+d\zeta}}^{r^{-d+d\mu}} \hspace{-0.18 cm} \d u \ \varphi(u,v,r^d)
     			\leq \tfrac{1}{1-r^{d(\zeta-\mu)}} r^{-d\mu}I(\mu,r) 
     			<1,
     	\end{aligned}
     \end{equation}
     for large \(r\). This indicates that a vertex in \(\mathscr{B}_1\) with mark $u=r^{-d+d\mu}$ will, on average, only connect by a long edge to fewer than one weaker vertex in \(\mathscr{B}_2\) with mark at least \(u\). Conversely, a vertex with mark $u=r^{-d+d\mu}$ with $\mu<\zeta$ will typically be incident to more than one long edge with weaker target vertex. This characteristic naturally motivates the definition of $\zeta$ as it essentially estimates the expected number of vertices in \(\mathscr{B}_1\) incident to at least one long edge with weaker target vertex. This very definition is actually used in~\cite{jorritsmaKomjathyMitsche2024LDP,JorritsmaKomjathyMitsche2023} to introduce the parameter $\zeta_{\text{long}}$, which quantifies the cluster-size decay of finite components in supercritical WDRCMs in the regimes where long edges appear frequently (\(\zeta_\text{long}>0\)). It can be verified that we always have $\zeta_{\text{long}}=\zeta$. However, our alternative definition of $\zeta$ is more transparent for our approach and heuristics, particularly facilitating the generalisations to more correlated models in Section~\ref{sec:genWDRCM} below.
     
     We further elaborate on the heuristic link between $\zeta$ and $\P_\lambda(\cL(r,1))$. Suppose first that $\zeta<0$. In this case, the expectation above is approximately $r^{d\zeta}\ll 1$, implying that the probability of finding a long edge is bounded by $r^{d\zeta}$, yielding Property~\ref{G:noLongEdge}\(_\lambda\).

	Suppose now that $\zeta>0$. Then, the expectation above is approximately $r^{d\zeta}\gg 1$, so we typically expect to find many long edges. Assuming sufficient continuity to have $\zeta=2+\psi(\zeta)$ for simplicity, then long edges should only be absent if either
	\begin{enumerate}[(i)]
		\item no vertex has mark smaller than \(r^{-d+d\zeta}\), which occurs with probability roughly $\exp(-r^{d\zeta})$, or
		\item there is no edge between `weak' vertices with larger marks.
	\end{enumerate} 
	If we accept that $r^d \1_{[r^{-d+d\zeta},1]}$ is well enough approximated by the empirical distributions of these weaker vertices marks, then the number of such edges should be given by a Poisson random variable with mean $I(\zeta,r)$. Thus, the probability of finding no such edge is roughly $\exp(-r^{d(2 +\psi(\zeta))})=\exp(-r^{d\zeta})$. Formalising these heuristics, we obtain the following theorem.
	
	\pagebreak

 \begin{theorem}[Subcritical annulus-crossing phases for WDRCMs] \label{thm:WDRCM}
 	Let \(\scrG\) be the weight-dependent random connection model.
 	\begin{enumerate}[(i)]
 		\item 
 			If \(\zeta<0\), then
 			\[
 				\limsup_{r\to\infty}\frac{\log\P_1(\cL(r,1))}{\log r} = {d\zeta}
 			\]
 			and thus \(\wlc>0\). Moreover, for all \(\lambda<\wlc\), we have
 			\[
 				\limsup_{r\to\infty}\frac{\log \P_\lambda\big(B(r)\xleftrightarrow[]{}B(2r)^{\textsf{c}}\big)}{\log r} = d\zeta.
 			\]
 		\item 
 			If \(\zeta>0\), then 
 			\[
 				\limsup_{r\to\infty}\frac{\log\  \lvert\log \P_{1}(\neg \cL(r,1))\rvert}{\log r}\ge d\zeta>0.
 			\] 
 			In particular, \ref{G:noLongEdge}\(_\lambda\) does not hold for any \(\lambda>0\) and thus \(\wlc=0\).
 	\end{enumerate}
 \end{theorem}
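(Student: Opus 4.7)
The plan is to implement the strong/weak vertex heuristic rigorously, parametrising by a mark threshold $r^{-d+d\mu}$ with $\mu$ placed on the appropriate side of $\zeta$ in each part. Throughout, the independent setting automatically supplies monotonicity and the mixing properties, while Lemma~\ref{lem:PropL} and Theorem~\ref{thm:equiStatement} allow me to transfer estimates freely between intensities and between $\cL(r,c)$ for different values of $c$.

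For the upper bound in Part (i), fix $\mu \in (\zeta,0)$ and decompose $\cL(r,1)$ according to whether one or both endpoints of the offending long edge carry mark below $r^{-d+d\mu}$. Markov's inequality applied to the expected count $\asymp r^{d\mu}$ of strong vertices in $B(r)$ handles the strong-endpoint case; for the mixed and weak–weak cases, Campbell's formula combined with a dyadic decomposition over edge-length scales and the defining inequality $2+\psi(\mu)\leq \mu$ (valid since $\mu > \zeta$) likewise yields $r^{d\mu+o(1)}$. Letting $\mu \downarrow \zeta$ gives $\limsup_{r\to\infty} \log\P_1(\cL(r,1))/\log r \leq d\zeta$. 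For the matching lower bound, fix $\mu<\zeta$ and let $N_r$ count the edges between two weak vertices with one endpoint in $B(r)$ and the other in $B(3r)\setminus B(2r)$. Along a subsequence realising the $\limsup$ in the definition of $\psi(\mu)$, $\E[N_r]\gtrsim r^{d(2+\psi(\mu))}\geq r^{d\zeta+o(1)}$; the conditional independence of edges given the vertex process then makes a standard second-moment calculation give $\P_1(N_r\geq 1)\gtrsim \min(1,\E[N_r])$, completing the equality. With this in hand, $\zeta<0$ yields $\overline{\scrL}_1$ and hence $\wlc>0$ by Theorem~\ref{thm:main}(i); the decay statement for $\lambda<\wlc$ follows from Theorem~\ref{thm:Diamter} with $\xi=-\infty$ for the upper bound, while the inclusion $\cL(r,3) \subseteq \{B(r)\xleftrightarrow[]{} B(2r)^\textsf{c}\}$ together with the scaling $\P_\lambda(\cL(r,3))\gtrsim \P_\lambda(\cL(3r,1))$ from Lemma~\ref{lem:PropL} supplies the matching lower bound.

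For Part (ii), fix $\mu<\zeta$ and let $N_r$ be as above; since $\mu$ belongs to the defining set of $\zeta$, $2+\psi(\mu)\geq\zeta$, and along the subsequence realising the $\limsup$ we obtain $\E[N_r]\geq c\, r^{d\zeta}$. Conditioning on the vertex process and using edge independence yields $\P(N_r=0\mid \eta)\leq \exp(-\E[N_r\mid \eta])$; a variance estimate for the bilinear Poisson functional $\E[N_r\mid\eta]$ (computed via Mecke's formula) removes the conditioning and delivers $\P_1(N_r=0)\leq \exp(-c' r^{d\zeta})$ along the subsequence. Since $\neg\cL(r,1)\subseteq\{N_r=0\}$, this is the claimed $\liminf$ estimate; the failure of $\scrL_\lambda$ at every intensity follows from Theorem~\ref{thm:equiStatement}, and Theorem~\ref{thm:main}(ii) then gives $\wlc=0$.

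The principal technical hurdle is the passage from the conditional Poisson bound to the unconditional one $\P(N_r=0)\lesssim \exp(-c\,\E[N_r])$: the conditional bound is immediate from edge independence, but removing the conditioning demands concentration of $\E[N_r\mid\eta]$ around its mean via a variance computation that must not damage the exponent $d\zeta$. A related subtlety is to extract a genuine constant $c>0$ in the exponent rather than only $r^{-o(1)}$, which is handled either by a mild continuity assumption on $\psi$ at $\zeta$ or by choosing $\mu$ strictly below $\zeta$ so that $2+\psi(\mu)>\zeta$ holds strictly.
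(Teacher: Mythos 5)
Your upper-bound arguments (Part (i) upper bound via Markov/Campbell with $2+\psi(\mu)\le\mu$ for $\mu>\zeta$, the transfer between intensities via Theorem~\ref{thm:equiStatement}, the use of Lemma~\ref{lem:PropL} and the inclusion $\cL(r,3)\subset\{B(r)\xleftrightarrow[]{}B(2r)^{\textsf{c}}\}$, and Theorem~\ref{thm:Diamter} with $\xi=-\infty$) all run parallel to the paper. The genuine gap is in your lower bound for Part (i). The inequality you invoke, $\P_1(N_r\ge 1)\gtrsim \min(1,\E[N_r])$ ``by a standard second-moment calculation'', is false for exactly the heavy-tailed kernels this theorem is meant to cover. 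Take the Boolean/scale-free Gilbert case ($\alpha=0$, $\delta=\infty$) with $\gamma\in(1/2,1)$, so $\zeta=1-1/\gamma\in(-1,0)$: even after discarding marks below $r^{-d+d\mu}$ with $\mu<\zeta$, a vertex with mark in $[r^{-d+d\mu},r^{-d/\gamma}]$ is still ``weak'' in your sense yet connects to order $r^d$ vertices of the annulus, so $\E[N_r]\asymp r^{d(1+\zeta)}\to\infty$ while $\P_1(N_r\ge1)\le\P_1(\cL(r,1))\asymp r^{d\zeta}\to0$. Your claimed bound would thus yield a lower bound of constant order and contradict the (correct) upper bound; the conditional independence of edges given the vertex process does not control the second moment, because the shared-endpoint terms $\E\big[\sum_{\x}\deg_{\rm long}(\x)^2\big]$ dominate whenever a single moderately strong vertex carries many long edges. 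A genuine Paley--Zygmund bound $(\E N_r)^2/\E[N_r^2]$ might still give $r^{d\mu+o(1)}$, but establishing that requires precisely the mark-resolved second-moment computation you skipped, i.e.\ controlling $r^{3d}\int \d u\,\big(\int \d v\,\varphi(u,v,r^d)\big)^2$ against $I(\mu,r)^2$.

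The paper circumvents this by splitting $I(\mu,r)=I^{(1)}+I^{(2)}+I^{(3)}$ according to whether the dominant contribution comes from a rare strong(ish) vertex (one mark below $r^{-d+d\varepsilon}$) or from many weak--weak pairs. In the first case the correct lower bound is \emph{not} the expected edge count but (number of strong vertices)$\times\min(1,\Phi(u_0))$: one exhibits a pivot mark $u_0$ with $u_0\Phi(u_0)\ge r^{-d+d\mu}$ and bounds $\P_1(\bar\cL(r))\ge c\,r^du_0\,(\Phi(u_0)\wedge1)\ge c\,r^{d\mu}$. Only in the weak--weak case is the probability comparable to the expectation, and there the paper gets it not from a second moment but from the exponential bound $\P_1(\neg\bar\cL(r)\cap\mathcal A)\le \P_1(\mathcal A)\exp(-CI^{(3)}(r))$, where $\mathcal A$ is the $\varepsilon$-regularity event for the empirical mark distribution (proved by Chernoff bounds); this is also how Part (ii) is handled, with $\mu$-regularity. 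Your Part (ii) sketch (conditional Poisson bound plus concentration of the bilinear functional $\E[N_r\mid\eta]$) is in the same spirit and is plausible there because for $0<\mu<\zeta$ the relevant vertex counts diverge, but as written it is vaguer than the paper's regularity argument, and you correctly flag the issue of extracting a true constant in the exponent. To repair the proposal you must replace the false $\min(1,\E[N_r])$ step by either the paper's case distinction with the pivot-mark argument or a full second-moment computation that explicitly beats the shared-endpoint terms.
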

 
 We give the proof of Theorem~\ref{thm:WDRCM} in Section~\ref{sec:proofsExamples}.
 
 \begin{remark}\label{rem:thmWDRCM}
 	Let us comment on the above result. 
 	\begin{enumerate}[(a)]
 		\item 
 			An equivalent statement to Part~(ii) is that, for all \(\varepsilon>0\), the event \(\cL(r,1)\) occurs with probability at least $1-\exp(-r^{d\zeta-\varepsilon})$ for \emph{some}  arbitrarily large values of \(r\). Additionally, if the \(\limsup\) defining $\psi$ is a true limit, then  the \(\limsup\) in Part~(ii) can also be replaced by a \(\liminf\), that is, \(\cL(r,1)\) occurs with probability at least $1-\exp(-r^{d\zeta-\varepsilon})$ for \emph{all} large values of $r$. Similarly, under this condition, the \(\limsup\) in the statements of Part~(i) are true limits, or in other words, the probabilities are of order $r^{d\zeta + o(1)}.$
		\item 
  			To apply our results effectively, we assume \(\zeta\neq 0\). The case where \(\zeta=0\) is more intricate and may be described as \emph{weakly scale invariant}, akin to the scale-invariant regime in long-range percolation. In this regime, results can depend heavily on specific details of the model, so universal results may not hold. Consider, for example, scale-invariant long-range percolation, where each pair of vertices at distance \(r\) is independently connected by an edge with probability \(r^{-2d}\). It is straightforward to see that \(\zeta=0\) (also see Proposition~\ref{prop:deffForWDRCM} below). By independence of edges, the number of long edges connecting the sets \(\mathscr B_1\) and \(\mathscr B_2\) (as defined above) is binomially distributed with parameters of order \(r^{2d}\) and \(r^{-2d}\). Hence, there exists a long edge with positive probability, implying \(\wlc=0\). However, if we replace the connection probability by \((r \log(r))^{-2d}\), then still \(\zeta=0\), but the expected number of long edges tends to zero, implying \(\wlc>0\).
  			While for many models \(\zeta=0\) aligns with boundary regimes, the \emph{age-dependent random connection model}~\cite{GGLM2019} is an important example that has \(\zeta=0\) for a whole non-empty parameter regime, independently of the profile-function, cf.~Figure~\ref{fig:Deff} below. For a more detailed discussion on the weakly scale-invariant case, we refer the reader to~\cite{GraLuMo2022}.			
 	\end{enumerate}
 \end{remark}

 The theorem highlights the importance of determining the value of $\zeta$ and, in particular, whether $\zeta$ is positive or negative. Let us define
 \[
    \deff^+:=-\psi(0+)\qquad\text{ and }\qquad  \deff^-:=-\psi(0-),
 \]
 the limit from the right and left respectively of the absolut value of $\psi$ at zero. Then $\deff^->2$ implies $\zeta<0$, while $\deff^+<2$ implies $\zeta>0$. If $\psi$ is continuous at zero, we can simply define
 \[
 \deff:=\deff^+=\deff^-=-\psi(0)=- \limsup_{r\to\infty} \frac{\log \Big(\int_{r^{-d}}^1 \d u \int^1_{r^{-d}} \d v \  \varphi(u, v, r^d)\Big)}{d\log r}.
 \]
 The parameter $\deff$ was first introduced in~\cite{GraLuMo2022} as the \emph{effective decay exponent} in order to derive the existence or absence of supercritical percolation phases in one-dimensional WDRCMs. It was further used in~\cite{Moench2024} to prove continuity of the percolation function in the \(\deff<2\) regime. The motivation for introducing $\deff$ comes from the observation that the smallest vertex marks among the vertices in \(\mathscr B_1\) and \(\mathscr B_2\) (defined as above as ball and annulus, respectively) are typically of order $r^{-d}$, suggesting that the probability of two randomly picked vertices in \(\mathscr B_1\) and \(\mathscr B_2\) being connected by an edge scales like \(r^{-d\deff}\). 
 
 This leads to a natural analogy with a classical long-range percolation model, where each pair of vertices \(\x,\y\) is independently connected by an edge with probability proportional to \(|x-y|^{-d\deff}\), which justifies the term \emph{effective decay exponent} for \(\deff\). Our results demonstrates that this simple exponent $\deff$ largely dictates the principal behaviour of the model and distinguishes the two universality classes, analogous to classical long-range percolation~\cite{Schulman1983,Penrose1991}. However, while \(\deff\) captures the model's overall behaviour, it may lack precision for finer results such as explicit tail bounds. In particular, the specific manner in which vertex marks contribute can be too nuanced to be fully characterised by \(\deff\) alone leading us to the finer exponent \(\zeta\). 
 
 It is worth noting that the two universality classes according to strong (\(\zeta>0\)) or weak (\(\zeta<0\)) long-range effects are insufficient to fully describe all global properties accurately. Indeed, while \(\zeta\) effectively captures the overall occurrence of long edges, it does not capture their precise connectivity. Specifically, in some models the \(\zeta>0\) phase includes parameter regimes with \(\lc>0\) as well as those with \(\lc=0\)~\cite{GLM2021}. Detecting such a change of behaviour requires a deeper understanding and control over the inhomogeneities arising from the marks.
 
Finally, let us note that although we have exclusively assumed vertex locations as governed by a Poisson point process throughout the section, all results remain applicable to a Bernoulli site-percolated lattice. In this case, the integrals over vertex locations should be replaced by sums. 

\subsubsection{The interpolation model} \label{sec:wdrcmInterpol}
  \begin{table}
\begin{center}
	\caption{Various choices for \(\gamma\), \(\alpha\) and \(\delta\) for the weight-dependent random connection model and the models they represent in the literature together with their \(\zeta<0\) phases and the value of \(\zeta\) within. Here, to shorten notation, \(\delta=\infty\) represents models constructed with \(\rho\) being the indicator function.}
	\small
	\begin{tabular}{l l l l}
		\toprule
	 	\textbf{Parameters} & \(\boldsymbol{\zeta<0}\) \textbf{iff} & \(\boldsymbol{\zeta}=\) & \textbf{Names and references} 
	 	\tabularnewline 
	 	\midrule  
	 	\(\gamma = 0, \alpha = 0, \delta = \infty\) & always & \(-\infty\) & Gilbert graph~\cite{Gilbert61}, \\ & & & random geometric graph~\cite{Penrose2003} \vspace{4 pt}
	 	\tabularnewline 
	 	\(\gamma = 0, \alpha =0, \delta<\infty\) & \(\delta>2\) & \(2-\delta\) & random connection model~\cite{Penrose1991}, \\ & & & long-range percolation~\cite{Schulman1983} \vspace{4 pt}
	 	\tabularnewline 
	 	\(\gamma>0, \alpha=0,\delta=\infty\) & \(\gamma<1\) & \(1-\tfrac{1}{\gamma}\) & Boolean model \cite{Hall85,Gouere08}, \\ & & & scale-free Gilbert graph~\cite{Hirsch2017} \vspace{4 pt}
	 	\tabularnewline 
	 	\(\gamma>0, \alpha = 0, \delta<\infty\) & \(\delta>2\), \(\gamma<\tfrac{\delta-1}{\delta}\) & \((1-\tfrac{\delta-1}{\gamma\delta})\vee (2-\delta)\) & soft Boolean model~\cite{GGM22,jahnel_Lu_Ort_2024_cluster} \vspace{4 pt}
	 	\tabularnewline 
	 	\(\gamma = 0, \alpha>0, \delta = \infty\) & \(\alpha<1\) & \(-\infty\) & ultra-small scale-free geometric  \\ & & & network (\(\alpha>1\))~\cite{Yukich2006}, \\ & & & weak-kernel model~\cite{GHMM2022,Lue2022}  \vspace{4 pt}
	 	\tabularnewline 
	 	\(\gamma>0, \alpha=\gamma, \delta\leq \infty\) & \(\delta>2\), \(\gamma<\tfrac{1}{2}\) & \(\tfrac{\delta(2\gamma-1)}{2\delta\gamma-1}\vee (2-\delta)\); \(\delta<\infty\) &scale-free percolation \cite{DeijfenHofstadHooghiemstra2013,DeprezWuthrich2019}, \\ & & \(\tfrac{2\gamma-1}{\gamma}\); \(\delta=\infty\) & geometric inhomogeneous \\ & & & random graphs ~\cite{BringmannKeuschLengler2019} \vspace{4 pt}
	 	\tabularnewline 
	 	\(\gamma>0, \alpha=1-\gamma, \delta\leq\infty\) & never & not applicable  & age-dependent random \\ & & & connection model~\cite{GGLM2019}
	 	\tabularnewline \bottomrule 
	\end{tabular}
	\label{tab:interPol}
\end{center}
\end{table} 
	We apply the previously established results and, in particular, derive values of \(\zeta\) for a specific yet general instance of the WDRCM. More precisely, we discuss the \emph{interpolation model} first introduced in~\cite{GraLuMo2022} as a model that contains and interpolates between many important and well-established models from the literature. Recall that \(\scrV\) is given by a standard Poisson point process on \(\R^d\times(0,1)\) and that we denote vertices by \(\x=(x,u_x)\). In the interpolation model, edges are drawn independently given \(\scrV\) with probability
 \begin{equation}\label{eq:rho}
 	\mathbf{p}(\x,\y)=\varphi(u_x,u_y,|x-y|^d)=\rho\big(g(u_x,u_y)|x-y|^d\big),
 \end{equation}
 where \(\rho\colon(0,\infty)\to[0,1]\) is an integrable and non-increasing \emph{profile function} and 
 \begin{equation}\label{eq:interpolKernel}
 	 g\colon (0,1)\times (0,1)\to (1,\infty), \quad (s,t) \mapsto  (s\wedge t)^{\gamma}(s\vee t)^{\alpha},
 \end{equation}
 for \(\gamma\in[0,1), \alpha\in[0,2-\gamma)\), is the \emph{interpolation kernel}. Note that the interpolation kernel \(g\) is non-increasing and symmetric in both arguments. It is straightforward to deduce that the interpolation model is a WDRCM with all required properties. Further, regardless of the profile-function, the degree distribution of the interpolation model follows a power law if either \(\gamma>0\) or \(\alpha>1\), see~\cite{Lue2022}. 
    
Let us now examine the profile function more closely. Two types have been established in the literature: The \emph{long-range} profile function \(\rho(x):=p(1\wedge |x|^{-\delta})\) for \(\delta> 1\) or the \emph{short-range} profile function \(\rho(x):=p\mathbbm{1}\{0\le x\le 1\}\). Here, \(p\in(0,1]\) can be used to simultaneously introduce Bernoulli bond-percolation. However, we will mainly assume \(p=1\) as it does not qualitatively affect our results. As summarised in~Table~\ref{tab:interPol}, the interpolation model can be used to describe versions of many well-established models. This model has the huge advantage that it describes these models with only four real parameters and allows for easy comparisons and couplings. A similar, yet slightly different, parameterisation has also been studied, for instance, in~\cite{HofstadHoornMaitra2023, jorritsmaKomjathyMitsche2024LDP,JorritsmaKomjathyMitsche2023}; in Table~\ref{tab:translation} we present a translation between the two parameterisations.  
 \begin{table}[ht!]
	\begin{center}
		\caption{Translation of parameters in kernel-based spatial random graphs and the classical WDRCM.}
		\begin{tabular}{c c c}
			\toprule
			parameters in~\cite{JorritsmaKomjathyMitsche2023,jorritsmaKomjathyMitsche2024LDP,HofstadHoornMaitra2023} & & parameters here and in~\cite{GHMM2022,GLM2021,Moench2024}
			\tabularnewline \midrule
			\(\tau - 1\) & \phantom{\(\hat{=}\)} & \(1/\gamma\) 
			\tabularnewline 
			\(\alpha\) & \phantom{\(\hat{=}\)} & \(\delta\) 
			\tabularnewline
			\(\sigma\) & \phantom{\(\hat{=}\)} & \(\alpha/\gamma\)
			\tabularnewline
			\bottomrule 
		\end{tabular}
		\label{tab:translation}
	\end{center}
\end{table} 

The following proposition summarises the values \(\zeta\) for the interpolation model with a long-range profile function. However, the according results for a short-range profile can simply be derived by sending \(\delta\to\infty\). Its proof is given in Section~\ref{sec:proofsExamples}.

\begin{prop}\label{prop:deffForWDRCM} 
	Consider the interpolation model with a profile function with \(\delta\in(1,\infty)\).
    \begin{enumerate}[(i)]
        \item 
            If \(\delta>2\), \(\gamma<1-1/\delta\), and \(\alpha<1-\gamma\), then
            \[
                \zeta = \max\big\{2-\delta, 1-\tfrac{\delta-1}{\gamma\delta}, \tfrac{\alpha+\gamma-1}{\gamma},\tfrac{2(\alpha+\gamma-1)}{\alpha+\gamma}\big\} <0.
            \]
        \item  
            If \(\delta=2\), \(\gamma<1-1/2\), and \(\alpha<1-\gamma\), or if \(\delta\geq 2\) and either \(\gamma<1-1/\delta\) and \(\alpha=1-\gamma\), or \(\gamma=1-1/\delta\) and \(\alpha<1-\gamma\), then \(\zeta=0\).
        \item   
            If either \(\delta<2\), or \(\gamma>1-1/\delta\), or \(\alpha>1-\gamma\), then
            \[
                \zeta= \max\big\{2-\delta, 1-\tfrac{\delta-1}{\gamma\delta}, \tfrac{\delta(\alpha+\gamma-1)}{\delta(\alpha+\gamma)-1}\big\}>0.
            \]
    \end{enumerate}
\end{prop}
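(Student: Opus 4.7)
My plan is to compute $\psi(\mu)$ directly from \eqref{eq:psi} by evaluating the defining double integral asymptotically, and then to solve $\zeta = 2 + \psi(\zeta)$ by a case analysis on the parameters. The change of variables $u = r^{d(x-1)}$, $v = r^{d(y-1)}$ transforms the integral over $[r^{d(\mu-1)}, 1]^2$ into one over $[\mu, 1]^2$, and after restricting to $u \leq v$ (equivalently $x \leq y$) by symmetry I obtain
\[
\int_{r^{d(\mu-1)}}^1\!\!\int_{r^{d(\mu-1)}}^1 \rho(g(u,v)r^d)\,\d u\,\d v = 2(d\log r)^2\int_\mu^1\!\!\int_\mu^y r^{d\, F(x,y)}\,\d x\,\d y,
\]
where $F(x,y) = x+y-2 - \delta\,(\phi(x,y))_+$ and $\phi(x,y) := 1 + \gamma(x-1) + \alpha(y-1)$. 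The function $F$ is continuous and piecewise linear on the compact triangle $R(\mu) := \{\mu \leq x \leq y \leq 1\}$: it equals $F_A(x,y) := x+y-2$ on $A := \{\phi \leq 0\}$ (where $\rho = 1$) and $F_B(x,y) := (1-\delta\gamma)x+(1-\delta\alpha)y-2+\delta(\alpha+\gamma-1)$ on $B := \{\phi > 0\}$. Laplace-type asymptotics then yield $\psi(\mu) = \max_{(x,y) \in R(\mu)} F(x,y)$, as the polynomial-in-$\log r$ prefactors vanish after division by $d\log r$.

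Being piecewise linear on a compact polygon, $F$ attains its maximum at a vertex of $R(\mu)$ or on the segment $\{\phi = 0\} \cap R(\mu)$. Enumerating these and solving $\mu = 2 + F(x,y)$ at the maximizer produces five candidate values: $\zeta_1 = 2-\delta$ from the vertex $(1,1)\in B$, $\zeta_2 = 1-(\delta-1)/(\gamma\delta)$ from $(\mu,1)\in B$, $\zeta_3 = \delta(\alpha+\gamma-1)/(\delta(\alpha+\gamma)-1)$ from $(\mu,\mu)\in B$, $\zeta_4 = 2(\alpha+\gamma-1)/(\alpha+\gamma)$ from the diagonal endpoint $(1-1/(\alpha+\gamma),\, 1-1/(\alpha+\gamma))$ of the $A$--$B$ boundary, and $\zeta_5 = (\alpha+\gamma-1)/\gamma$ from the boundary endpoint $((\alpha+\gamma-1)/\gamma,\,0)$. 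A direct computation shows that $\zeta_4$ and $\zeta_5$ belong to $R(\zeta)$ exactly when $\alpha+\gamma \leq 1$, while $\zeta_3$ satisfies the condition $(\zeta_3,\zeta_3) \in B$ exactly when $\alpha+\gamma \geq 1$ (given $\delta(\alpha+\gamma) > 1$).

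It then remains to match candidates to regimes. Under the hypotheses of Part~(i), $\alpha+\gamma < 1$ renders $\zeta_3$ infeasible while $\zeta_4, \zeta_5$ are feasible, and all of $\zeta_1,\zeta_2,\zeta_4,\zeta_5$ are non-positive; the maximum of these four gives the announced formula, with $\zeta_4$ dominating $\zeta_5$ when $\alpha \geq \gamma$ and vice versa, and $\zeta_2$ dominating $\zeta_1$ when $\delta\gamma > 1$. Under Part~(iii), either $\alpha+\gamma > 1$ (rendering $\zeta_4, \zeta_5$ infeasible) or $\delta < 2$ or $\gamma > 1-1/\delta$, and at least one of $\zeta_1, \zeta_2, \zeta_3$ is strictly positive, producing the announced formula $\max\{\zeta_1, \zeta_2, \zeta_3\}$. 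Part~(ii) then collects the boundary situations where exactly one candidate equals $0$ and dominates, giving $\zeta = 0$ by continuity. The main obstacle is the combinatorial case analysis identifying the actual maximizer of $F$ in each sub-regime of the signs of $1-\delta\gamma$, $1-\delta\alpha$ and $2-\delta(\alpha+\gamma)$, and verifying the feasibility conditions above; the Laplace asymptotic $\psi(\mu) = \max F$ itself is standard for piecewise-linear $F$ on a compact polygon, and Remark~\ref{rem:thmWDRCM}(a) lets us replace the $\limsup$ in \eqref{eq:psi} by a genuine limit for the deterministic profile $\rho$.
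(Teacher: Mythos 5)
Your proposal is correct and follows essentially the same route as the paper's proof: after the same exponential change of variables (yours is just the affine reparametrisation $x=1-t_x$, $y=1-t_y$ of the paper's $(t_x,t_y)$), both arguments reduce $\psi(\mu)$ to the maximum of the piecewise-linear exponent over the triangle (the paper's identity~\eqref{formula_psi_chi}) and then solve the fixed-point equation $\zeta=2+\psi(\zeta)$ by a parameter case analysis, and your candidate values $\zeta_1,\dots,\zeta_5$ with their feasibility conditions are exactly the maximisers appearing in the paper's four-case computation. The only difference is organisational --- you enumerate candidate maximisers with feasibility checks where the paper writes out $\psi(\mu)$ explicitly in each regime --- so this is essentially the same proof.
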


\begin{figure}
\centering
\begin{subfigure}{0.45\textwidth}
    \resizebox{\textwidth}{!}{
    \begin{tikzpicture}[every node/.style={scale=1.2}]
        \draw[->] (0,0) to (13,0) node[right] {$\gamma$};
        \draw	
    		  (10,0) node[anchor=north] {1};

        \draw[dotted] (-2.7,10) to (10,10);
        \draw[] (10,0) to (10,10)
	           (10,10) to (0,13);

        \draw[](0,0) to (0,10.5);
        \draw [->] (0,12.5) to (0,13.3) node[above] {$\alpha$};
        \draw[decorate, decoration = {snake, segment length = 10 pt, amplitude = 1mm}] (0, 10.5)--(0,12.5);
        \draw	(0,0) node[anchor=east] {0}
        	   (0,10) node[anchor=east] {1}
        	   (0,13) node[anchor = east] {2};

        \draw (6,-0.7) node[align = left, anchor = north] {Boolean model};
        \draw (-1.7,11.5) node[align = left] {ultra-small \\ scale-free \\ geometric \\ network};
        \draw (12,2.3) node[align = left] {age-dependent \\ random connection \\ model};
        \draw[->, bend angle = 45, bend right] (11.5,3) to (6,4.4);
        \draw (12,7.5) node[align = left] {scale-free \\ percolation};
        \draw[->, bend angle = 45, bend left] (11.5,7) to (7.2,7);
        \draw (0,0) node[circle, fill = black, scale=0.4] {};
        \draw (-0.4,-0.7) node[align = left, anchor = north] {random connection model};
        \draw[->] (-0.5, -0.5) to (-0.2,-0.2);

        \draw 	(7, 0) node[anchor = north] {$\tfrac{\delta-1}{\delta}$}
			 (3, 0) node[anchor = north] {$\tfrac{1}{\delta}$} 
			 (0, 3) node[anchor = east] {$\tfrac{1}{\delta}$}
			 (0, 6) node[anchor = east] {$\tfrac{2}{\delta}$};

        \draw[pattern=north east lines, pattern color=lightgray!30!, draw=none] 
            (0,0) plot[smooth,samples=200,domain=0:7,variable=\g]({0},{10*\g/7}) -- 
            plot[smooth,samples=200,domain=7:0,variable=\g]({7},{3*\g/7});
         
   	    \draw[thick] (7,0) to (7,3); 
	   \draw[thin] (3,3) to (7,3); 
	   \draw[thin] (7,3) to (10,3); 
	   \draw[thin] (3,0) to (3,3); 
	   \draw[thick] (0,10) to (7,3); 
        \draw[dashed] (7,3) to (10,0); 
        \draw[dashed] (0,0) to (3,3); 
        \draw[thin] (3,3) to (5,5); 
        \draw[dashed] (5,5) to (10,10); 
        \draw[thin] (0, 6) to (3,3); 
 
        \draw	(6.5,8) node[scale = 1.5, thick] {$\tfrac{\delta(\alpha+\gamma-1)}{\delta(\alpha+\gamma)-1}>0$}
    		  (8.5,1.5) node[scale = 1.3, thick, rotate = 0] {$1-\tfrac{\delta-1}{\gamma\delta}>0 $}
    		  (1.5,3) node[scale = 1.3, thick] {$2-\delta<0$}
    		  (5,1.5) node[scale = 1.3, thick, rotate = 0] {$1-\tfrac{\delta-1}{\gamma\delta}<0$}
    		  (2.8,5.7) node[scale = 1.3, thick, rotate = -45] {$\tfrac{2(\alpha+\gamma-1)}{\alpha+\gamma}<0$}
    		  (5,3.5) node[scale = 1, thick, rotate = 0] {$\tfrac{\alpha+\gamma-1}{\gamma}<0$};
    \end{tikzpicture}
    }
    \caption{\(\delta>2\)} 
    \label{fig:DeffGeq2}
\end{subfigure}
\hspace{1cm}
\begin{subfigure}{0.45\textwidth}
    \resizebox{\textwidth}{!}{
    \begin{tikzpicture}[every node/.style={scale=1.2}]
        \draw[->] (0,0) to (13,0) node[right] {$\gamma$};
        \draw	
    		    (10,0) node[anchor=north] {1};

        \draw[dotted] (-2.7,10) to (10,10);
        \draw[] (10,0) to (10,10)
	            (10,10) to (0,13);
    
        \draw[](0,0) to (0,10.5);
        \draw [->] (0,12.5) to (0,13.3) node[above] {$\alpha$};
        \draw[decorate, decoration = {snake, segment length = 10 pt, amplitude = 1mm}] (0, 10.5)--(0,12.5);
        \draw	(0,0) node[anchor=east] {0}
        	   (0,10) node[anchor=east] {1}
        	   (0,13) node[anchor = east] {2};

        \draw (6,-0.7) node[align = left, anchor = north] {Boolean model};
        \draw (-1.7,11.5) node[align = left] {ultra-small \\ scale-free \\ geometric \\ network};
        \draw (12,2.3) node[align = left] {age-dependent \\ random connection \\ model};
        \draw[->, bend angle = 45, bend right] (11.5,3) to (6,4.4);
        \draw (12,7.5) node[align = left] {scale-free \\ percolation};
        \draw[->, bend angle = 45, bend left] (11.5,7) to (7.2,7);
        \draw (0,0) node[circle, fill = black, scale=0.4] {};
        \draw (-0.4,-0.7) node[align = left, anchor = north] {random connection model};
        \draw[->] (-0.5, -0.5) to (-0.2,-0.2);

        \draw 	
			    (5.55, 0) node[anchor = north] {$\tfrac{1}{\delta}$} 
			    (0, 5.55) node[anchor = east] {$\tfrac{1}{\delta}$}
			    (0, 11.1) node[anchor = east] {$\tfrac{2}{\delta}$};

         
	   \draw[dashed] (0,10) to (7,3); 
        \draw[dashed] (7,3) to (10,0); 
        \draw[dashed] (0,0) to (3,3); 
        \draw[dashed] (3,3) to (5,5); 
        \draw[dashed] (5,5) to (10,10); 
        \draw[thin] (5.55,0) to (5.55, 5.55);
        \draw[thin] (10,5.55) to (5.55,5.55);
        \draw[thin] (0, 11.1) to (5.55,5.55);

        \draw	(6,8) node[scale = 1.5, thick] {$\tfrac{\delta(\alpha+\gamma-1)}{\delta(\alpha+\gamma)-1}>0$}
    		  (8,3) node[scale = 1.5, thick, rotate = 0] {$1-\tfrac{\delta-1}{\gamma\delta}>0 $}
    		  (2.5,5) node[scale = 1.5, thick] {$2-\delta>0$};
    \end{tikzpicture}
    }
    \caption{\(\delta\in(1,2)\)} 
    \label{fig:DeffLeq2}
\end{subfigure}
\caption{Phase diagram in \(\gamma\) and \(\alpha\) for the interpolation model constructed with a long-range profile with \(\delta>2\) in~(a) and \(\delta\in(1,2)\) in~(b). The \(\zeta<0\) phase in~(a) is shaded in grey while the \(\zeta>0\) phases are not shaded. The values of \(\zeta\) in the corresponding parameter regimes are shown. The solid line in~(a) marks the phase transition \(\zeta=0\). Dashed lines represent no change of behaviour.}
\label{fig:Deff}
\end{figure}
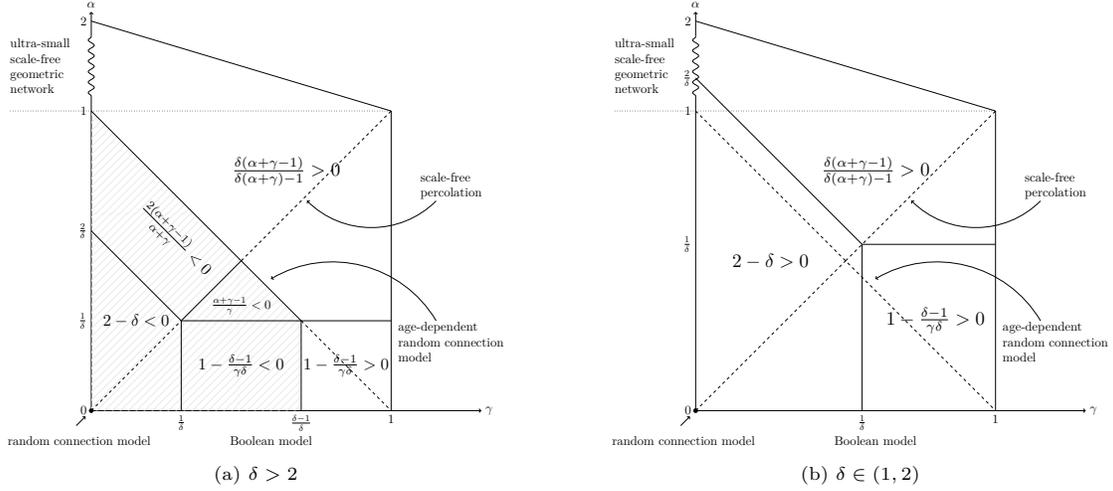

	Our findings are summarised in Figure~\ref{fig:Deff}, and we further apply Part~(i) to various models from the literature in Table~\ref{tab:interPol}.

    Let us further discuss the possible subcritical annulus-crossing phase, depending on the value of $\zeta$. First, when $\zeta>0$, by Theorem~\ref{thm:WDRCM} the long-edge probability \(\P_1(\cL(r,1))\) does not decay to zero but actually approaches one. Observing that the WDRCM is a model in the independent setting, by Theorem~\ref{thm:equiStatement} this is equivalent to saying that \(\wlc=0\), and there is no subcritical annulus-crossing phase.

     When $\zeta=0$, as outlined in Remark~\ref{rem:thmWDRCM}~(b), whether \(\wlc=0\) or \(\wlc>0\) depends on fine details of the model. In fact, an additional slowly varying correction term may influence that substantially. However, in the strict Pareto case of our parameterisation of the interpolation model, it is easily seen that we still have \(\wlc=0\), as was first observed in~\cite{jacob2025crossing} for the special instance of scale-free percolation. 
	 \begin{prop}\label{prop:zeta=0}
	 	Consider the interpolation model with \(\delta=2\) or with \(\delta>2\) and either \(\gamma=1-1/\delta\) and \(\alpha \leq 1-\gamma\) or \(\gamma<1-1/\delta\) and \(\alpha=1-\gamma\), then \(\P_1(\cL(r,1))>0\) and consequently \(\wlc=0\).
	 \end{prop}

We present the proof in Section~\ref{sec:proofsExamples}.
	Let us finally comment on the more interesting \(\zeta<0\) regime of the interpolation model. We can again use Theorem~\ref{thm:WDRCM} and Theorem~\ref{thm:equiStatement} to deduce that we have a non-trivial subcritical annulus-crossing phase, namely \(\wlc>0\), in that case. Moreover, it is easy to see that the \(\limsup\) defining $\psi$ is a true limit, so \(\P_1(\cL(r,1))= r^{d\zeta +o(1)}\to 0\) by Remark~\ref{rem:thmWDRCM}~(a). In the whole subcritical annulus-crossing phase \(\lambda\in(0,\wlc)\), the annulus-crossing probabilities also decay at the same speed, and one might thus expect that the subcritical behaviour is driven by \(\P_\lambda(\cL(r,1))\) and explained by \(\zeta\) alone. 

    As already mentioned below Theorem~\ref{thm:Diamter}, the probability of the annulus-crossing event also determines the largest Euclidean diameter of the clusters in \(B(r)\). In the classical WDRCM this further applies to the tail of the Euclidean diameter of the typical cluster of the origin when the origin is added with a uniformly distributed vertex mark to the graph. Hence,
		\[
			\P^o_\lambda(\exists \x\colon |x|>r, \o\xleftrightarrow[]{}\x) \leq \P_\lambda \big(B(r)\xleftrightarrow[]{} B(2r)^\textsf{c}\big)  = r^{d\zeta+o(1)}.
		\]  
		Furthermore, the tail of the Euclidean diameter can provide bounds on the cardinality of the cluster of the origin as the Poisson point process is well-concentrated. This relation becomes particularly strong for the standard Boolean model (\(\alpha=0, \delta\to \infty\)), for which it is shown in~\cite{Gouere08} that for all \(\lambda<\wlc\),   
			\[
				\P_\lambda\big(\sharp\{\x: \o\xleftrightarrow[]{}\x\}>r^d\big)\asymp \P_\lambda\big(\exists \x: |x|>r, \o\xleftrightarrow[]{}\x\big) \asymp \P_\lambda(\cL(r,1)). 
			\]
	 	Further, it was recently shown that \(\lc=\wlc\) in the standard Boolean model for all but at most countably many values of \(\gamma\)~\cite{DembinTassion2022}. The potential exclusion of countably many values of \(\gamma\) results from the techniques applied in the proof, though it is widely believed that both phase transitions are always the same. These findings combined appear to strengthen the above idea and suggest that even the whole subcritical behaviour of the Boolean model can be described solely by the tail of the long-edges event. However, this is not the case in general. For instance, consider long-range percolation (\(\gamma=\alpha=0\) and \(\delta<\infty\)). Thus, \(\wlc = 0\) whenever \(\delta<2\) according to Proposition~\ref{prop:deffForWDRCM}. However, it is well-known that \(\lc>0\) for all \(\delta>1\), see~\cite{MeesterRoy1996}, implying \(0=\wlc <\lc\). For \(\delta>2\) and \(\lambda<\wlc\) our results yield the polynomial decay with exponent \(2-\delta\) as a bound for the Euclidean diameter and the cardinality of the origin's component. However, the actual tail exponent of the Euclidean diameter is \(1-\delta\) due to~\cite{jahnel_Lu_Ort_2024_cluster} while the cardinality is light-tailed~\cite{MeesterRoy1996}. Thus, describing the subcritical phase solely via annulus crossings is no longer accurate due to the long-range effects inherent in the model. The same can further be observed in the \emph{soft Boolean model}, a model that interpolates between the classical Boolean model and long-range percolation. For a detailed discussion on this model, we refer the reader to~\cite{jahnel_Lu_Ort_2024_cluster}. This of course does not give insights into the already mentioned question as to whether \(0<\wlc<\lc\) is possible. It is also of interest to determine whether the subcritical tail distribution of diameter and cardinality as mentioned above is generally governed by \(\P_1(\cL(r,1))\) for any kernel combined with a short-range profile or whether this is specific to the Boolean model, as well as the correct type of connection for an accurate description of the subcritical phase if this is not the case.

\subsection{Generalisations} \label{sec:genWDRCM}
The previous results of this section and particularly Theorem~\ref{thm:WDRCM} apply our main results to the classical WDRCM. The results get particularly strong as the classical model takes place in the independent setting of Definition~\ref{def:IndS}. Our main results however allow for generalisations towards more correlated models. There are two natural ways to implement correlations into the framework of the WDRCM: either by replacing the underlying Poisson point process with a correlated point process or by extending the connection mechanism in a way that the presence of an edge additionally depends on vertices surrounding its end vertices. Here, we focus on the latter approach and stick with the Poisson process for vertex placement, but we also provide examples of correlated vertex sets in the Sections~\ref{sec:Cox} and~\ref{sec:FRI}. Both effects could, of course, be combined, though we have chosen not to, focusing on presenting the essential concepts in a concise way. 

The setting of the generalised WDRCM is as follows: The vertex set \(\scrV\) remains a standard Poisson point process on \(\R^d\times(0,1)\). Given \(\scrV\), a pair \(\x=(x,u_x),\y=(y,u_y)\in\scrV\) of vertices is connected by an edge with probability 
	\[
		\mathbf{p}(\x,\y,\scrV\setminus\{\x,\y\})=\mathbf{p}(\y,\x,\scrV\setminus\{\x,\y\}),
	\] 
	where we assume that \(\mathbf{p}\) is invariant under translations. We further impose the following homogeneity condition when \(\mathbf{p}\) is integrated with respect to the underlying Poisson point process: For two deterministically given vertices \(\x\) and \(\y\), we have  
	\begin{equation} \label{eq:varphi}
    	\E_\lambda[\mathbf{p}(\x,\y,\scrV)]= \varphi_\lambda(u_x, u_y, |x-y|^d),
	\end{equation}
	where \(\varphi_\lambda\colon (0,1)\times(0,1)\times(0,\infty)\to[0,1]\) is an integrable function that is symmetric in the vertex marks arguments. Note that these deterministically given vertices are no elements of the Poisson point process almost surely. Put differently, the occurrence of an edge between any two vertices does additionally depend on (potentially) the entire vertex set. However, the connection probability between two given vertices \(\x\) and \(\y\) resembles, on average, the connection probability of the same fixed vertices in a classical WDRCM. Nevertheless, the annealed connection function \(\varphi_\lambda\) now depends on \(\lambda\) and we no longer assume monotonicity in the vertex marks arguments. It is thus clear that our basic Assumptions \ref{G:Point_process}--\ref{G:locallyFinite} are always satisfied but one further has to verify one kind of mixing condition and monotonicity (if needed) separately for each connection rule \(\mathbf{p}\). 
	
	In this paragraph, we focus on identifying instances of the models, where long edges are absent. To this end, we demonstrate that the results previously derived can be adapted in a straightforward manner. We define, in the same way as in~\eqref{eq:psi} and~\eqref{eq:zeta},
	\begin{equation*}
		\psi_\lambda(\mu):= \limsup_{r\to\infty}\frac{\log \Big(\int_{r^{-d+d\mu}}^1 \d u \int_{r^{-d+d\mu}}^1 \d v \  \varphi_\lambda(u, v, r^d)\Big)}{d\log r},
	\end{equation*} 
	as well as
	\begin{equation*}
		\zeta_\lambda := \sup\big\{\mu\in(-\infty,1)\colon\mu<2+\psi_\lambda(\mu)\big\},
	\end{equation*}
	noting that both quantities now depend on \(\lambda\). The following results shows that our heuristic explained above remains valid when long edges are rare.
	
	\pagebreak
	
	\begin{theorem} \label{thm:gWDRCM}
		Let \(\scrG\) be a generalised weight-dependent random connection model
		\begin{enumerate}[(i)]
			\item 
				If \(\zeta_\lambda<0\), then 
				\[
					\limsup_{r\to\infty}\frac{\P_\lambda(\cL(r,1))}{\log r}\leq d\zeta_\lambda. 				
				\]
			\item 
				If there exists \(\lambda'>0\) such that \(\overline{\zeta}_{\lambda'}:=\sup\{\zeta_\lambda\colon \lambda<\lambda'\}<0\) and \(\scrG\) has the uniform mixing Property~\ref{G:mixing}\(_{\lambda'}\), then 
				\[
					\limsup_{r\to\infty}\sup_{\lambda<\lambda'}\frac{\P_\lambda(\cL(r,1))}{\log r}\leq d\overline{\zeta}_{\lambda'}.		
				\]
				In particular, we have \(\wlc>0\).
			\item	
				Assume \(\wlc>0\), and for some \(\lambda<\wlc\) the polynomially mixing Property~\ref{G:P_mixing}\(_\lambda^\xi\) for some \(\xi_\lambda\) as well as \(\zeta_\lambda<0\). Then,
				\[
					\limsup_{r\to\infty}\frac{\log \P_\lambda\big(B(r)\xleftrightarrow[]{}B(2r)^\textsf{c}\big)}{\log r}\leq d(\zeta_\lambda \vee \xi_\lambda).
				\]
		\end{enumerate}
	\end{theorem}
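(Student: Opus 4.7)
The three parts are proved in sequence with Part~(i) being the central estimate that the others bootstrap from. For Part~(i) my plan is to mirror the heuristic preceding Theorem~\ref{thm:WDRCM} and adapt the expected-number-of-edges argument to the averaged kernel \(\varphi_\lambda\) via the Slivnyak--Mecke formula. Fix any \(\mu\in(\zeta_\lambda,0)\), so that by the definition of \(\zeta_\lambda\) we have \(2+\psi_\lambda(\mu)\leq\mu\). Writing \(S_r\) for the event that some vertex of \(\scrV\cap B(3r)\) carries a mark at most \(r^{-d+d\mu}\), decompose
\[
\P_\lambda(\cL(r,1))\;\leq\;\P_\lambda(S_r)\;+\;\P_\lambda(\cL(r,1)\cap S_r^{\textsf{c}}).
\]
A Markov bound on the Poisson count yields \(\P_\lambda(S_r)=O(r^{d\mu})\). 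For the second summand, Slivnyak--Mecke together with the hypothesis~\eqref{eq:varphi} bound it by
\[
\lambda^2\int_{B(r)}\!\d x\int_{|x-y|>r}\!\d y\int_{r^{-d+d\mu}}^{1}\!\d u\int_{0}^{1}\!\d v\ \varphi_\lambda(u,v,|x-y|^d),
\]
which I then split at \(v=r^{-d+d\mu}\).

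The ``both weak'' part uses monotonicity of \(\varphi_\lambda\) in the third argument (Assumption~\ref{G:locallyFinite}) and a dyadic shell decomposition over \(|x-y|\in(2^k r,2^{k+1}r]\) to yield a geometrically convergent sum dominated by \(Cr^{2d}\int_{r^{-d+d\mu}}^1\int_{r^{-d+d\mu}}^1\varphi_\lambda(u,v,r^d)\,\d u\,\d v=O(r^{d(2+\psi_\lambda(\mu))+\eps})=O(r^{d\mu+\eps})\) for any \(\eps>0\). The ``strong far-field'' part, where \(v\leq r^{-d+d\mu}\), exploits that on \(S_r^{\textsf{c}}\) any strong \(\y\) must lie outside \(B(3r)\), so \(|x-y|\asymp|y|\) for \(x\in B(r)\); by symmetry of \(\varphi_\lambda\) in its mark arguments and the analogous dyadic decomposition over \(|y|\), this term obeys the same \(O(r^{d\mu+\eps})\) bound. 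Letting \(\eps\downarrow 0\) and \(\mu\downarrow\zeta_\lambda\) gives Part~(i). Part~(ii) is obtained by observing that all these estimates are uniform in \(\lambda<\lambda'\) once \(\mu\) is fixed in \((\overline{\zeta}_{\lambda'},0)\), which establishes Property~\ref{G:noLongEdgeUnif}\(_{\lambda'}\); Theorem~\ref{thm:main}(i) then concludes \(\wlc\geq\lambda'>0\). Part~(iii) is an immediate application of Theorem~\ref{thm:Diamter}, combining the long-edge exponent from Part~(i) with the assumed polynomial mixing rate.

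I expect the main obstacle to be the strong far-field contribution in Part~(i). In the independent setting of the classical WDRCM one can simply localise to \(B(3r)\) and invoke independence with the exterior, but in the generalised setting one must genuinely control long edges emanating from strong vertices arbitrarily far from the origin. This relies on the integrability of \(\varphi_\lambda\) and on monotonicity in distance, through a dyadic sum whose convergence ultimately traces back to the inequality \(2+\psi_\lambda(\mu)\leq\mu<0\) selected at the very start.
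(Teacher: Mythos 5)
Your architecture for Part~(i) (Mecke expectation bound, a mark-threshold decomposition, dyadic distance shells, and delegating Parts~(ii) and~(iii) to Theorem~\ref{thm:main} and Theorem~\ref{thm:Diamter}) matches the paper's strategy, and Parts~(ii) and~(iii) are handled correctly modulo Part~(i). But your decomposition in Part~(i) contains a genuine gap precisely at the point you yourself flag as the ``strong far-field'' term, and the dyadic argument you gesture at does not close it.

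The problem is the choice of a single, distance-independent mark threshold \(r^{-d+d\mu}\) imposed only on vertices in \(B(3r)\). After Mecke, the contribution you denote ``strong far-field'' is, up to symmetry,
\[
\lambda^2\int_{B(r)}\!\d x\int_{B(3r)^{\mathsf c}}\!\d y\int_{r^{-d+d\mu}}^{1}\!\d u\int_{0}^{r^{-d+d\mu}}\!\d v\ \varphi_\lambda\big(u,v,|x-y|^d\big).
\]
The definition of \(\psi_\lambda\) in~\eqref{eq:psi} only controls integrals of \(\varphi_\lambda(\cdot,\cdot,R^d)\) over the square \([R^{-d+d\mu},1]^2\). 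On a shell \(|y|\asymp R=2^k r\), your domain \([r^{-d+d\mu},1]\times(0,r^{-d+d\mu}]\) contains the region \(v<R^{-d+d\mu}\), which lies \emph{outside} \([R^{-d+d\mu},1]^2\), and the generalised model carries no monotonicity in the mark arguments that would let you bound \(\varphi_\lambda(u,v,\cdot)\) for small \(v\) by its value at the threshold. Bounding \(\varphi_\lambda\le 1\) instead gives a shell contribution of order \(r^d\cdot(2^kr)^d\cdot (2^kr)^{-d+d\mu}=r^{d(1+\mu)}2^{kd\mu}\), summing to \(O(r^{d(1+\mu)})\), which is \emph{not} \(O(r^{d\mu})\). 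And the bare integrability of \(\varphi_\lambda\) in Condition~\eqref{eq:varphi} gives no polynomial rate on that tail. So neither route you suggest closes the gap.

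The paper's fix, which your proposal is missing, is to make the threshold \emph{depend on the distance}: discard (with a union/Markov bound of order \(r^{d\mu}\)) any vertex at location \(y\) whose mark is below \(|y|^{-d+d\mu}\), rather than below the fixed \(r^{-d+d\mu}\). After removing those vertices, the mark lower bound in the Mecke integral at distance \(|y|\) becomes \(|y|^{-d+d\mu}\), which \emph{matches} the domain in the definition of \(\psi_\lambda\), so every shell is controlled by \(|y|^{d(\psi_\lambda(\mu)+\eps)}\) and the integral over \(|y|>2r\) yields \(O(r^{d(2+\psi_\lambda(\mu)+\eps)})\le O(r^{d(\mu+\eps)})\). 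With this modification your dyadic argument does become unnecessary (the paper simply integrates directly) but would also work; without it, the far-field strong vertices are not accounted for. Two smaller points: Theorem~\ref{thm:main}(i) yields \(\wlc>0\) but not \(\wlc\ge\lambda'\) as you claim, since the proof there takes \(\lambda\) below an additional constant depending on \(r_0\); and in your ``both weak'' estimate the application of \(\psi_\lambda\) on each shell needs the inclusion \([r^{-d+d\mu},1]\subset[(2^kr)^{-d+d\mu},1]\) to invoke the definition, which you should state rather than assume.
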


	Let us mention that the upper bounds in Theorem~\ref{thm:WDRCM} are special cases of those in Theorem~\ref{thm:gWDRCM}. Specifically, with the previously notation of \(\zeta\) in~\eqref{eq:zeta}, we have \(\overline{\zeta}_\lambda\equiv\zeta\). One of the reasons why we only derive upper but not lower bounds is due to not assuming any monotonicity in the vertex marks arguments of \(\varphi_\lambda\). Nonetheless, the results imply some kind of monotonicity. Indeed, \(\zeta_\lambda<0\) suggest that either vertices with smaller marks are more powerful or the marks play no influential role on the connection probability. 
				
	Let us finally emphasise that  throughout Sections~\ref{sec:wdrcm}, \ref{sec:deff} and~\ref{sec:genWDRCM} we have explicitly used the Euclidean distance in the connection functions and in the derivation of the tail exponent \(\zeta\). This essentially makes the model rotationally invariant. However, the rotation invariance is never used in our proofs and we could easily replace the Euclidean norm by any other norm and adapt our results. In fact, to ensure translation invariance it suffices to assume connection probabilities of the type \(\varphi(u_x,u_y,x-y)\) and our results still remain true as long as there is a way to translate \(x-y\) into some form of distance that allows for meaningful definitions of \(\psi\) and \(\zeta\).


\subsubsection{The soft Boolean model with local interferences} \label{sec:BoolInterferences} 

	We provide an example of a graph that is a generalised WDRCM but no classical WDRCM, which satisfies polynomial mixing and \(\overline{\zeta}_\lambda<0\). The idea is to combine the soft Boolean model~\cite{GGM22,jahnel_Lu_Ort_2024_cluster} with local interference and noise in the spirit of SINR percolation~\cite{DousseEtAl2006, Tobias2020,jahnel2022sinr}. The model has three parameters: \(\gamma\in(0,1), \delta>1\) and \(\beta\in[0,1)\), and the construction is as follows: Each vertex \(\x=(x,u_x)\) has a sphere of influence with radius \(u_x^{-\gamma/d}\). The larger the sphere of influence, the more attractive the vertex is to other vertices. Further, the sphere of influence may be enlarged by additional long-range effects with exponent \(\delta>1\). This setup so far is precisely the soft Boolean model contained in the classical WDRCM discussed above, cf.~Table~\ref{tab:interPol} and~\cite{GGM22,jahnel_Lu_Ort_2024_cluster}. Additionally, each vertex now has a sphere of interference with radius \(u_x^{-\beta/d}\). The vertices within this sphere distract the vertex \(\x\) and complicate its edge creation, cf.~Figure~\ref{fig:softVsInter}. 
 \begin{figure}[t!]
    \centering
    \begin{subfigure}{0.4\textwidth}
        \centering
        \frame{\includegraphics[scale = 0.449]{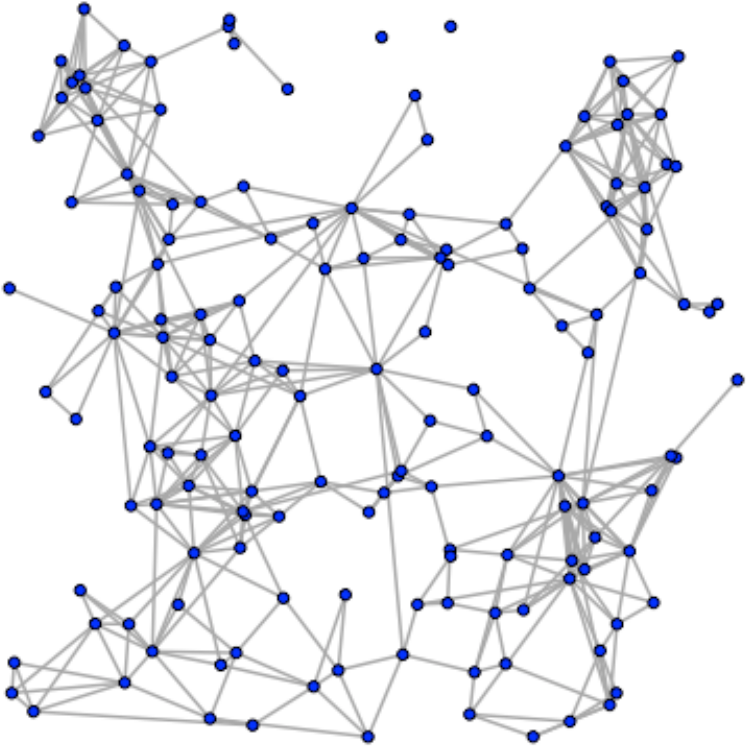}}
        \caption{Soft Boolean model} 
        \label{fig:softVsInterSoft}
    \end{subfigure}
    \hspace{2 cm}
    \begin{subfigure}{0.4\textwidth}
        \centering
        \frame{\includegraphics[scale = 0.449]{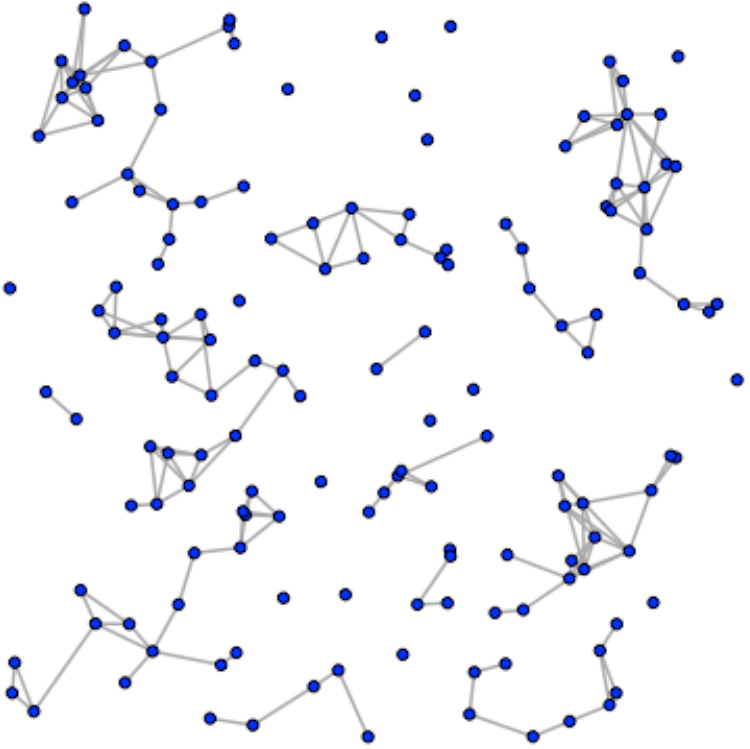}}
        \caption{Soft Boolean model with local interference}
        \label{fig:softVsInterInter}
    \end{subfigure}
    \caption{Examples for the soft Boolean model, Figure~\ref{fig:softVsInterSoft}, and the soft Boolean model with local interference, Figure~\ref{fig:softVsInterInter}, on the same \(150\) vertices sampled from a Poisson point process of intensity \(\lambda=0.04\). For the edge probabilities, the parameters \(\gamma=0.65, \delta=2.7\), and \(\beta=0.3\) are used. Hence, \(\zeta>0\) for the soft Boolean model but \(\zeta<0\) for the model with local interference.}
    \label{fig:softVsInter}
\end{figure}
 More precisely, for a given vertex \(\x=(x,u_x)\), let us introduce the random variable
\[
   \mathscr{N}^\beta(\x,\scrV):= \sharp \big\{\y\in\scrV \colon |x-y|^d\leq u_x^{-\beta}\big\}. 
\]
The graph \(\G\) is then generated, given \(\scrV\), by connecting \(\x=(x,u_x)\) and \(\y=(y,u_y)\) with probability
\begin{align}\label{eq:BoolInferences}
   \mathbf{p}(\x,\y,\scrV\setminus\{\x,\y\}) = \1_{\{u_x<u_y\}}\frac{1\wedge u_x^{-\gamma\delta}|x-y|^{-d\delta}}{1+\mathscr{N}^{\beta}(\x,\scrV\setminus \{\x,\y\})}+\1_{\{u_x\geq u_y\}}\frac{1\wedge u_y^{-\gamma\delta}|x-y|^{-d\delta}}{1+\mathscr{N}^{\beta}(\y,\scrV\setminus \{\x,\y\})}.
\end{align} 

\begin{corollary}[Soft Boolean model with local interference] \label{corol:BoolInterferences}
	Let \(\gamma<1\), \(\delta>1\) and \(\beta\in[0,1)\). The soft Boolean model with local interference~\eqref{eq:BoolInferences} has Property~\ref{G:P_mixing}\(_\lambda^{\xi}\) with parameter \(\xi=d(1-1/\beta)\) for all \(\lambda\). Further,
	\[
		\overline{\zeta}_\lambda \equiv \zeta = (2-\delta)\vee \tfrac{1-\delta(1-\gamma)-\beta}{\gamma\delta-\beta}. 	
	\] 
	In particular, \(\overline{\zeta}_\lambda<0\) if \(\delta>2\) and \(\gamma<(\delta+\beta-1)/\delta\). Thus, we have \(\wlc>0\) in this case, and for all \(\lambda<\wlc\), 
	\[
		\limsup_{r\to \infty}\frac{\log \P_\lambda\big(B(r)\xleftrightarrow[]{}B(2r)^\textsf{c}\big)}{\log r} \leq d\big((\delta-2)\vee \tfrac{1-\delta(1-\gamma)-\beta}{\gamma\delta-\beta} \vee (1-1/\beta)\big).
	\]
\end{corollary}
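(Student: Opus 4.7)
The plan is to verify the three inputs required by Theorem~\ref{thm:gWDRCM}: polynomial mixing with exponent \(\xi=d(1-1/\beta)\), the scaling of the annealed connection kernel \(\varphi_\lambda\), and the explicit value of \(\zeta\). The only model-specific complication compared with the classical WDRCM is the random factor \(1/(1+\mathscr{N}^\beta(\cdot,\scrV))\) coupling an edge to the surrounding vertex set, and every step reduces to controlling it.

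For the polynomial mixing, fix \(|x|\ge 10\) and introduce the good event
\[
G_r:=\{\text{no vertex located in } B(3r)\cup B(xr,3r) \text{ has mark below } r^{-d/\beta}\}.
\]
On \(G_r\) every vertex \(\x\in B(3r)\) has interference radius \(u_x^{-\beta/d}<r\), so the count \(\mathscr{N}^\beta(\x,\scrV)\) depends only on \(\scrV\cap B(4r)\); the analogous statement holds for vertices in \(B(xr,3r)\). Since \(|x|\ge 10>8\) the enlarged balls \(B(4r)\) and \(B(xr,4r)\) are disjoint, and thus the restrictions of \(\scrV\) to the two regions are independent Poisson processes. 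Consequently the restricted events \(\cG(r)\cap G_r^{(1)}\) and \(\cG(xr,r)\cap G_r^{(2)}\) factorise, and splitting each indicator into its good and bad parts yields \(|\mathrm{Cov}_\lambda(\1_{\cG(r)},\1_{\cG(xr,r)})|\le C\,\P_\lambda(G_r^\textsf{c})\). Campbell's formula applied to the expected number of low-mark vertices gives \(\P_\lambda(G_r^\textsf{c})\asymp \lambda\, r^{d(1-1/\beta)}\), which decays polynomially since \(\beta<1\), uniformly in \(\lambda\) on bounded ranges. This establishes \(\scrP\scrM_\lambda^{\xi}\) with \(\xi=d(1-1/\beta)\).

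For the annealed kernel, pick deterministic \(\x=(x,u),\y=(y,v)\) with \(|x-y|^d=r^d\) and set \(s:=u\wedge v\). The interference count at the stronger of the two vertices is Poisson distributed with mean \(\lambda c_d s^{-\beta}\), and the identity \(\mathbb{E}[1/(1+N)]=(1-\mathrm{e}^{-\mu})/\mu\asymp\min(1,1/\mu)\) yields
\[
\varphi_\lambda(u,v,r^d)\asymp (1\wedge s^{-\gamma\delta}r^{-d\delta})\, s^\beta,
\]
with constants depending on \(\lambda\) but not on \(r\) or on \(u,v\). In particular the scaling exponents \(\psi_\lambda\) and \(\zeta_\lambda\) are independent of \(\lambda\), so \(\overline{\zeta}_\lambda\equiv\zeta\).

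The computation of \(\zeta\) is the main technical step. The integrand \(s^\beta\wedge s^{\beta-\gamma\delta}r^{-d\delta}\) exhibits a crossover at \(s=r^{-d/\gamma}\), while the lower limit of integration in the double integral defining \(\psi_\lambda(\mu)\) sits at \(r^{-d(1-\mu)}\). Splitting into the subcases according to whether \(\mu\) lies below or above \(1-1/\gamma\) and whether \(\gamma\delta\) lies below or above \(\beta+1\), one verifies that \(\psi_\lambda\) is either constant (with value \(-\min(\delta,(\beta+1)/\gamma)\)) or affine in \(\mu\) with slope \(\beta+1-\gamma\delta\). Solving \(\mu=2+\psi_\lambda(\mu)\) in the affine regime yields the fixed point
\[
\mu^*=\frac{1-\delta(1-\gamma)-\beta}{\gamma\delta-\beta},
\]
while the constant regime contributes the candidate \(2-\delta\); a comparison across cases gives \(\zeta=(2-\delta)\vee\mu^*\). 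This is negative precisely when \(\delta>2\) and \(\gamma<(\delta+\beta-1)/\delta\). Under these assumptions Theorem~\ref{thm:gWDRCM}(ii) gives \(\wlc>0\), and Theorem~\ref{thm:gWDRCM}(iii) applied with the above \(\zeta\) and \(\xi\) delivers the announced polynomial decay of the annulus-crossing probability. The main obstacle is the bookkeeping in the case analysis for \(\zeta\); once the correct good event is identified, both the mixing and the kernel estimates are routine.
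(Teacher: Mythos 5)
Your proposal is correct and follows essentially the same route as the paper: verify the annealed kernel via \(\E[1/(1+\mathrm{Poisson}(\mu))]=(1-\mathrm{e}^{-\mu})/\mu\asymp 1\wedge\mu^{-1}\) to get \(\varphi_\lambda(u,v,r^d)\asymp (u\wedge v)^\beta(1\wedge (u\wedge v)^{-\gamma\delta}r^{-d\delta})\), bound the covariance of the two local crossing events by the probability of a low-mark vertex whose interference sphere bridges the two regions (giving \(\xi=d(1-1/\beta)\)), compute \(\psi_\lambda\) and solve \(\mu=2+\psi_\lambda(\mu)\), and conclude with Theorem~\ref{thm:gWDRCM}(ii)--(iii). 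Your good-event/locality decomposition for the mixing bound is only a slightly more explicit phrasing of the paper's direct Markov-inequality argument, so the two proofs are essentially identical.
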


If we choose \(\beta=0\), each sphere of interference has radius one. Hence, the model reduces to a version of the soft Boolean model with additional yet ultimately insignificant fluctuations on large scales. Indeed, for \(\beta=0\) the identified \(\zeta<0\) phase aligns with the results found for the soft Boolean model. For a phase diagram illustrating the \(\zeta<0\) phase of the model with local interference see Figure~\ref{fig:BoolInterferences}.
	
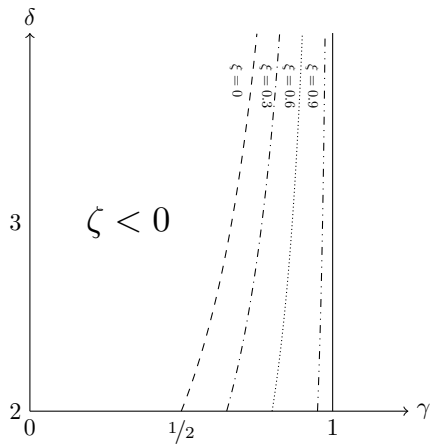
\begin{figure}[t!]
    \begin{center}
    \resizebox{0.4\textwidth}{!}{
    \begin{tikzpicture}[every node/.style={scale=0.9}]
        \draw[->] (0,0) -- (5,0) node[right] {$\gamma$};
        \draw	(0,0) node[anchor=north] {0}
        (2,0) node[anchor=north] {\nicefrac{1}{2}}
        (4,0) node[anchor=north] {1};

        \draw[->] (0,0) -- (0,5) node[above] {$\delta$};
        \draw (0,0) node[anchor=east] {2}
            (0,2.5) node[anchor=east] {3};

        \draw[] (4,0) -- (4,5);

        \draw (1.3,2.5) node[scale = 1.5, thick]{$\zeta<0$};
        
        \draw[dashed] (0,0) plot[domain=0.5:3/4,variable=\g]({4*\g},{-5+2.5*1/(1-\g)});
        \draw (90/33, 4.3) node[scale = 0.6, rotate = 270] {$\xi =0\phantom{.0}$};

        \draw[dash dot] (0,0) plot[domain=13/20:33/40,variable=\g]({4*\g},{-5+2.5*((1-0.3)/(1-\g)});
        \draw (103/33, 4.3) node[scale = 0.6, rotate = 270] {$\xi =0.3$};

        \draw[densely dotted] (0,0) plot[domain=16/20:36/40,variable=\g]({4*\g},{-5+2.5*((1-0.6)/(1-\g)});
        \draw (113/33, 4.3) node[scale = 0.6, rotate = 270] {$\xi =0.6$};

        \draw[dash dot dot] (0,0) plot[domain=19/20:39/40,variable=\g]({4*\g},{-5+2.5*((1-0.9)/(1-\g)});
        \draw (123/33, 4.3) node[scale = 0.6, rotate = 270] {$\xi =0.9$};
        
    \end{tikzpicture}
    }
    \end{center}
    \caption{Phase diagram for \(\gamma\) and \(\delta\) for the soft Boolean model with local interference. Represented from left to right the phase transition for \(\zeta<0\) for \(\xi=0,0.3,0.6,0.9\).} 
    \label{fig:BoolInterferences}
\end{figure}


\section{Further examples}\label{sec:correlatedExamples}
We briefly discuss additional examples and generalisations of the previous section. We do not examine these models in as much detail as the WDRCMs but illustrate, by example, how the derived results can be applied to models that involve more than one vertex weight or are constructed on more correlated point sets.

\subsection{Ellipses percolation} \label{sec:ellipsis}
Ellipses percolation is a percolation model first introduced in 2017~\cite{Teixeira_Ungaretti_2017} as a generalisation of the classical Boolean model with random radii. The vertices are given through a standard Poisson point process of intensity \(\lambda>0\) in the plane. Each vertex is centre of an ellipse with a uniform direction, minor axis equal to one and a heavy-tailed major axis. More precisely, each vertex \(\x\) is assigned an independent direction \(\pi_x\), distributed uniformly on \([-\pi/2,\pi/2)\), and another independent vertex mark \(R_x\), distributed according to a Pareto distribution with parameter \(2/\gamma>0\). Here, the parametrisation is chosen to be consistent with that above. Each vertex \(\x\) it thus associated with the ellipse, centred in \(x\in\R^2\) with minor axis of length \(1\), major axis of length \(R_x\), and orientation \(\pi_x\). We connect any pair of vertices by an edge if their associated ellipses intersect. This model can be seen as a generalisation of the standard Boolean model where the circles around each vertex are replaced by the described ellipses. However, it is worth noting that using ellipses introduces additional correlations into the model. These additional correlations have interesting effects, some of which we comment on below. Note that the model is translation invariant and takes place in the independent setting of Definition~\ref{def:IndS}. The model is locally finite for all \(\gamma<2\)~\cite{Teixeira_Ungaretti_2017}, and monotone in the sense of~\eqref{G:monotone}.

\begin{corollary}[Ellipses percolation]\label{corol:ellipsis} 
	Consider ellipses percolation described above for \(\gamma>0\) and \(\lambda>0\).
	\begin{enumerate}[(i)]
		\item If \(\gamma<1\), then \(\wlc>0\) and for all \(\lambda<\wlc\), we have for an appropriate constant \(C>0\),
			\[
				\P_\lambda(\cL(r,1))\leq \lambda C r^{2(1-1/\gamma)}
			\]
			as well as
			\[
			 \lim_{r\to\infty}\frac{\log \P_\lambda\big(B(r)\xleftrightarrow[]{}B(2r)^\textsf{c}\big)}{\log \P_\lambda(\cL(r,1))}=1.
			\]
		\item If \(\gamma>1\), then \(\wlc=0\).
	\end{enumerate} 
\end{corollary}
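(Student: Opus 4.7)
Ellipses percolation fits the independent setting of Definition~\ref{def:IndS}: the Poisson vertex set has independent increments, and the edge indicator $\1\{E_x\cap E_y\ne\emptyset\}$ depends only on the two vertices' marks $(R,\pi)$. Hence the model is automatically monotone and satisfies both~\ref{G:mixing}$_\lambda$ and~\ref{G:P_mixing}$_\lambda^{-\infty}$ for every $\lambda>0$, so by Theorems~\ref{thm:main}, \ref{thm:equiStatement} and~\ref{thm:Diamter} it is enough to control $\P_\lambda(\cL(r,1))$, with a matching lower bound on the annulus crossing handled by an explicit single-edge witness. The geometric backbone is the deterministic implication that whenever $\x\sim\y$ with $|x-y|=s$ then $R_\x\vee R_\y\ge cs$ for some absolute $c>0$, since $E_x\cup E_y$ is a connected set of diameter $\ge s$ while each individual ellipse has diameter $R+O(1)$.

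For the \emph{upper bound in Part (i)}, assume $\gamma<1$ and decompose $\cL(r,1)\subseteq A\cup B$ with
\begin{align*}
A &:=\{\exists\, \x\in\scrV\cap B(r):R_\x\ge cr\},\\
B &:=\{\exists\, \y\in\scrV\setminus B(r): R_\y\ge cr,\ E(y,R_\y,\pi_\y)\cap B(r)\ne\emptyset\}.
\end{align*}
Markov combined with the Pareto tail $\P(R\ge t)\asymp t^{-2/\gamma}$ gives $\P_\lambda(A)\le\lambda\pi r^2\P(R\ge cr)\asymp\lambda r^{2(1-1/\gamma)}$. For $B$, Mecke yields
\[\P_\lambda(B)\le \lambda\!\int_{cr}^\infty\!\! f(R)\,\d R\int_{\R^2}\!\!\P_\pi(E(y,R,\pi)\cap B(r)\ne\emptyset)\,\d y,\]
and in polar coordinates the inner integral equals $\pi r^2+\Theta(rR+R^2)$ after noting that for $|y|=d\in(r,r+R/2)$ the alignment probability of a uniform orientation is $(2/\pi)\arccos(2(d-r)/R)$; the outer $R$-integration converges \emph{exactly because $\gamma<1$}, producing $\P_\lambda(B)\le C\lambda r^{2(1-1/\gamma)}$. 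Summing, $\P_\lambda(\cL(r,1))\le C\lambda r^{2(1-1/\gamma)}\to 0$, so~\ref{G:noLongEdge}$_\lambda$ holds, and via the independent setting so does~\ref{G:noLongEdgeUnif}$_\lambda$; Theorems~\ref{thm:main}(i) and~\ref{thm:Diamter} (with $\xi=-\infty$) deliver $\wlc>0$ together with the upper half of the log-ratio identity.

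For the \emph{matching lower bound in Part (i)}, pick a Poisson vertex $\x\in B(r/2)$ with $R_\x\ge 10r$: the ellipse $E_\x$ then extends well beyond $B(2r)$ regardless of orientation, and $E_\x\cap B(2r)^{\textsf{c}}$ has area bounded below by a positive constant times $r$. The expected count of such $\x$ is of order $\lambda r^{2(1-1/\gamma)}\to 0$, hence $\P(\text{at least one})\ge c\lambda r^{2(1-1/\gamma)}$ by the Poisson estimate $\P(N\ge 1)\ge\E N-(\E N)^2/2$; conditionally on such an $\x$, the region $E_\x\cap B(2r)^{\textsf{c}}$ contains a further Poisson vertex $\y$ with probability $\ge 1-e^{-c\lambda r}\to 1$, and the centre-in-ellipse argument $y\in E_\x\cap E_\y$ forces $\x\sim\y$, crossing the annulus. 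This closes the log-ratio identity. For \emph{Part (ii)}, fix $\alpha\in(1,\gamma)$ and $M=r^\alpha$: the mean number of $\x\in B(r)$ with $R_\x\ge M$ is $\asymp\lambda r^{2-2\alpha/\gamma}\to\infty$, so such $\x$ exists with probability $\to 1$, $E_\x\setminus B(x,r)$ has area $\asymp M\to\infty$ and hosts a further Poisson vertex $\y$, yielding $\x\sim\y$ with $|x-y|>r$. Hence $\P_\lambda(\cL(r,1))\to 1$ for every $\lambda>0$, \ref{G:noLongEdge}$_\lambda$ fails throughout, and Theorem~\ref{thm:main}(ii) gives $\wlc=0$.

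\textbf{Main obstacle.} The delicate step is the bound on event $B$: the naive Mecke estimate on \emph{ordered pairs} of vertices yields an expected number of long edges of order $\lambda^2 r^{4-2/\gamma}$, which is useless once $\gamma>1/2$. The correct object to count is the \emph{number of long ellipses reaching $B(r)$}, which is linear in $\lambda$; exploiting that a single long axis already certifies the event, independently of its partner vertex, is what makes the argument match the asserted polynomial $r^{2(1-1/\gamma)}$.
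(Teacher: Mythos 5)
Your proposal is correct in substance, but it follows a genuinely different, self-contained route. The paper settles both parts by citation: for Part~(ii) it invokes the result of Teixeira--Ungaretti that \(\lc=0\) for \(\gamma>1\) and uses \(\wlc\le\lc\); for Part~(i) it imports Lemma~2 of Hilario--Ungaretti, which is precisely the bound \(\P_\lambda\big(\exists\, x\sim y\colon x\in B(r),\ y\in B(2r)^{\textsf{c}}\big)\le 1-\exp\big(-\lambda c r^{2(1-1/\gamma)}\big)\), and then concludes via Theorems~\ref{thm:equiStatement} and~\ref{thm:Diamter}, exactly as you do. You instead re-derive both external inputs: your Mecke/first-moment computation counting \emph{single} long ellipses (your ``main obstacle'' remark is precisely the point of the cited lemma, which avoids the useless pair count of order \(\lambda^2 r^{4-2/\gamma}\)) recovers the upper bound on \(\P_\lambda(\cL(r,1))\); your single-big-ellipse witness supplies the matching lower bound for both \(\cL(r,1)\) and the crossing event, which --- combined with Theorem~\ref{thm:Diamter} and the containment \(\cL(r,3)\subset\{B(r)\xleftrightarrow{}B(2r)^{\textsf{c}}\}\) via Lemma~\ref{lem:PropL} --- is what pins the log-ratio limit down to \(1\), a step the paper's two-line proof leaves implicit; and your Part~(ii) argument proves the stronger statement \(\P_\lambda(\cL(r,1))\to1\) for every \(\lambda\), feeding directly into Theorem~\ref{thm:main}(ii) without any reference to \(\lc\) (and without needing local finiteness, so it covers all \(\gamma>1\)). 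What your route buys is independence from the two external references and explicit matching bounds; what the paper's route buys is brevity.

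Two small repairs to your estimate of the event \(B\). First, when the large axis belongs to the far endpoint \(\y\notin B(r)\), you can only guarantee that \(E_\y\) meets a slightly enlarged ball (say \(B(2r)\)), since it must touch \(E_\x\subset B\big(x,\tfrac{R_\x}{2}+1\big)\) rather than the point \(x\) itself; this changes nothing in the order of the bound. Second, the asserted alignment probability \((2/\pi)\arccos\big(2(d-r)/R\big)\) is not exact; the clean statement is that, averaging over the uniform orientation, the measure of admissible centres equals the mean area of the Minkowski sum \(B(r)\oplus E(0,R,\pi)\), which is \(\pi r^2+\Theta(rR+R)\) --- the ellipse's own area is of order \(R\), not \(R^2\). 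Your expression overestimates, so the conclusion \(\P_\lambda(B)\le C\lambda r^{2(1-1/\gamma)}\) still stands, but attributing the convergence of the \(R\)-integral to \(\gamma<1\) is an artifact: with the correct area the integral converges for all \(\gamma<2\), and the role of \(\gamma<1\) is simply that the resulting \(r^{2(1-1/\gamma)}\) tends to zero, in line with the cited literature.
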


The proof can be found in Section~\ref{sec:proofsExamples}. We briefly compare our result with the findings in~\cite{Teixeira_Ungaretti_2017}. It is shown in the Theorems~1.1.2 and~1.2.2 therein that the model is locally finite but contains no subcritical percolation phase for \(1<\gamma<2\). This aligns with our result, as \(\lc=0\) implies \(\wlc=0\). For the standard Boolean model it is impossible to be locally finite while having \(\lc=0\) at the same time. Only if additional long-range edges are introduced, such a behaviour can be achieved~\cite{GLM2021}. This implies that some sort of long-range behaviour is entailed in ellipses percolation. Indeed, in~\cite{Hilario-Ungaretti-2021} a comparison between ellipses percolation and long-range percolation is used to derive chemical distances in the \(1<\gamma<2\) regime. Conversely, our result states \(\wlc>0\) and thus implies \(\lc>0\) for \(\gamma<1\), which also aligns with the findings in~\cite{Teixeira_Ungaretti_2017}. Hence, except potentially for the boundary regime \(\gamma=1\), we find that the two critical intensities are either both positive or both zero. This is unlike the other long-range models studied in the previous section, all of which contained a parameter regime with \(\wlc=0<\lc\). The question of whether a Boolean model with arbitrary convex grains can have \(\lc=0\) while still being locally finite is explored in detail in the preprints~\cite{GKM2024_robustness,gracar2025chemical}. The results therein include but are not limited to generalisations of ellipses percolation to higher dimensions together with additional correlations between the axes. Whether \(\lc>0\) implies \(\wlc>0\) in all these models remains an interesting open question that we leave for future work.

\subsection{Models based on Cox processes}\label{sec:Cox}
So far we have exclusively presented applications of our results towards models where the underlying point cloud is given by Poisson point processes. However, our mixing Condition~\ref{G:mixing}\(_\lambda\) not only allows us to cover models where the edge-drawing mechanism is not strictly local, as seen in Section~\ref{sec:BoolInterferences}, but models based on correlated point processes as well. For simplicity, we focus here on correlations introduced by the vertices, assuming independent uniform markings as well as translation-invariant connection probabilities of the form $\mathbf{p}(\x,\y)$ with 
\begin{equation} \label{eq:CoxLocFinite}
	\int \d u_o\int\d x\int \d u_x\ \mathbf{p}((o,u_o),(x,u_x))<\infty.
\end{equation}
That is, only the end vertices of a potential edge are relevant in a translation invariant and integrable way. Put differently, we consider here the WDRCM based on more general point clouds. Let us however note that of course also mixed versions can be considered where correlations are present in both the vertex positions and the edge drawing mechanism. 

To enable explicit computations, we further narrow our attention here to Cox point processes, i.e.\ Poisson point processes with a random intensity. Let us mention however that also other classes of point processes such as many Gibbs point processes satisfy Condition~\ref{G:mixing}\(_\lambda\), as well. Their analysis in this context is left for future work and we refer the reader to the associated literature, respectively. In this section, \(\scrV\) is a (independent and uniformly marked) Cox point process, meaning a Poisson point process on $\R^d$ with intensity measure $\lambda\Lambda$, where $\lambda>0$ and $\Lambda$ is a random element of the space of Borel measures on $\R^d$, equipped with the evaluation \(\sigma\)-algebra. We assume that the distribution of $\Lambda$ is invariant under spatial shifts and that $\E[\Lambda([0,1]^d)]=1$. The annealed process (with respect to the random environment $\Lambda$) is then stationary with intensity $\lambda$ and simple if $\Lambda$ contains no atoms almost surely. 

An interesting class of examples can be constructed for $d=2$, where the directing measure $\Lambda$ is given by the edges of a random line process. More precisely, let $S\subset \R^2$ be a random segment process, then $\Lambda(A)=\nu_1(S\cap A)$, for all Borel sets $A\subset\R^d$, where $\nu_1$ denotes the one-dimensional Hausdorff measure. The segment process can be given, for example, by the edges of an independent Poisson--Voronoi tessellation or the rectangular line process, where lines are drawn perpendicular to two independent Poisson processes on the $x$- and $y$-axis in $\R^2$. Let us highlight, that, due to the local nature of the construction of the Poisson--Voronoi tessellation, the associated Cox point process features (exponential) decorrelations, whereas the Cox point process based on the rectangular line process does not possess this property (at least not along the axes). Alternatively, a large class of examples can be constructed, where the directing measure is given by a random density with respect to the Lebesgue measure, i.e.\ $\Lambda(\d x)=\ell_x\d x$. For instance, the density $(\ell_x)_{x\in \R^d}$ can be defined as $\ell_x=\phi_1\1\{x\in \Xi\}+\phi_2\1\{x\not\in \Xi\}$, where $\Xi\subset\R^d$ is a random closed set and $\phi_1,\phi_2\ge 0$. In turn, $\Xi$ could be, for example, given by an independent Poisson--Boolean model, with or without random radii (viewed as a subset of \(\R^d\), not as a graph). 

Let us mention that the percolation analysis for Boolean models based on Cox point processes has seen some interest in recent years~\cite{hirsch2022sharp,jahnel2022phase,hirsch2019continuum,jahnel2023continuum}, with results that essentially recover the nontrivial percolation behaviour seen in the classical Poisson case. In this work, we extend this analysis by considering not only Gilbert graphs based on Cox point processes, but also more general weight-dependent graph structures, at least with respect to the subcritical annulus-crossing phase. We do not consider concrete connection mechanisms or deal with the Property~\ref{G:noLongEdge}\(_\lambda\) here but solely focus on the mixing condition as many possibilities for \(\mathbf{p}\) have already been discussed in Section~\ref{sec:wdrcm}.

We now present an example of a non-trivial Cox point process featuring polynomial mixing in $\R^2$. For this, consider $\Lambda(\d x)=\ell_x\d x$ with 
\begin{align}\label{eq:Cox}
\ell_x=\phi\sum_{i\in I}\1\{|Y_i-x|<\rho_i\},
\end{align}
where $\phi>0$ and $\Y=((Y_i,\rho_i):i\in\N)$ is an independent homogeneous Poisson point process with intensity one and i.i.d.~marks $\rho_i\ge 0$, distributed according to a Pareto distribution with tail index $\beta> 2$.  
In words, $\ell_x$ is the number of Poisson--Boolean discs that overlap a location $x$ and, since $\beta>2$, the parameter $\phi=\phi(\beta)$ can be tuned in order for $\Lambda$ to be normalised in the sense $\E[\ell_o]=1$. Then, we have the following result, whose proof can be found in Section~\ref{sec:proofsExamples}:

\begin{prop}[Cox percolation]\label{prop:Cox} 
The model defined via~\eqref{eq:Cox} with the assumptions~\eqref{eq:CoxLocFinite} on the edge-drawing mechanism satisfies the Conditions~\ref{G:Point_process} -- \ref{G:locallyFinite} and is \emph{monotone} in the sense of~\eqref{G:monotone}. Moreover, the model has the \emph{polynomially mixing Property}~\ref{G:P_mixing}\(_\lambda^\xi\) for $\xi=2-\beta$ and all \(\lambda>0\). 
\end{prop}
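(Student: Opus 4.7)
The plan is to verify Conditions~\ref{G:Point_process}--\ref{G:locallyFinite} and~\eqref{G:monotone} by routine arguments, and to concentrate the work on the polynomial mixing bound, which is the only quantitative statement. For~\ref{G:Point_process} the Cox construction gives stationarity directly from that of $\mathbf{Y}$, the normalisation $\E[\ell_o]=1$ forces annealed intensity $\lambda$, and the absolute continuity of $\Lambda$ makes $\scrV$ almost surely simple. Condition~\ref{G:Translation} follows from stationarity of $\mathbf{Y}$ and translation invariance of $\mathbf{p}$. For~\ref{G:locallyFinite} a Campbell--Mecke calculation for the annealed mean degree reduces to bounding $\E[\ell_o\ell_y]$ uniformly in $y$; this quantity is finite because the Pareto exponent $\beta>2$ makes $\E[\ell_o^2]<\infty$, and combined with~\eqref{eq:CoxLocFinite} yields finite mean degree and hence almost sure local finiteness. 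Monotonicity~\eqref{G:monotone} is the standard Cox--Poisson thinning coupling applied conditionally on $\Lambda$, exactly as in the independent setting.

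The core of the proof is the bound on $|\operatorname{Cov}_\lambda(\1_{\cG(r)},\1_{\cG(xr,r)})|$ for $|x|\geq 10$. Since the two regions $B(3r)$ and $B(xr,3r)$ are separated by distance at least $4r$, the restrictions $\Lambda|_{B(3r)}$ and $\Lambda|_{B(xr,3r)}$ are independent as soon as no ball of $\mathbf{Y}$ intersects both of them. Accordingly, I split $\mathbf{Y}=\mathbf{Y}_{\mathrm{good}}\sqcup\mathbf{Y}_{\mathrm{bad}}$ where $\mathbf{Y}_{\mathrm{bad}}$ collects exactly those points whose ball meets both target regions; by Poisson colouring these pieces are independent marked Poisson processes. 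Let $\Lambda^{\mathrm{good}}$ denote the directing measure built from $\mathbf{Y}_{\mathrm{good}}$ alone, let $\scrG^{\mathrm{good}}$ be the corresponding Cox graph, and couple it to $\scrG$ via the natural graphical representation so that $\scrG^{\mathrm{good}}\subseteq\scrG$ almost surely. On the event $E^{\mathsf c}=\{\mathbf{Y}_{\mathrm{bad}}=\emptyset\}$ the two graphs coincide inside $B(3r)\cup B(xr,3r)$, and therefore their local annulus-crossing events agree on $E^{\mathsf c}$; on the other hand, the two local events for $\scrG^{\mathrm{good}}$ depend on disjoint and hence independent pieces of the underlying marked Poisson process (together with independent uniform marks and edge coins), so they are exactly independent.

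Three applications of the elementary inequality $|\P(\mathcal{E})-\P(\mathcal{E}')|\leq\P(E)$ for events agreeing outside $E$ then yield
\[
\bigl|\operatorname{Cov}_\lambda(\1_{\cG(r)},\1_{\cG(xr,r)})\bigr|\leq 3\,\P(E).
\]
Markov's inequality together with the Campbell formula for $\mathbf{Y}$ bound $\P(E)$ by
\[
\int_{\R^2}\d y\int_0^\infty \beta\rho^{-\beta-1}\1\{B(y,\rho)\cap B(3r)\neq\emptyset\}\,\1\{B(y,\rho)\cap B(xr,3r)\neq\emptyset\}\,\d\rho.
\]
The triangle inequality forces $\rho\geq(|x|-6)r/2\geq 2r$, while the area of admissible $y$ is at most $\pi(3r+\rho)^2=O(\rho^2)$, so the whole integral is $O(r^{2-\beta})$ thanks to $\beta>2$. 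Since the bound is uniform in $\lambda$, this establishes~\ref{G:P_mixing}\(_\lambda^{2-\beta}\).

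The only delicate point is the coupling between $\scrG$ and $\scrG^{\mathrm{good}}$: both must live on a common probability space so that the inclusion $\scrG^{\mathrm{good}}\subseteq\scrG$ and the coincidence on $E^{\mathsf c}$ hold almost surely, and the graphical representation must be compatible with the Cox construction, the uniform marks, and whatever edge randomisation is induced by $\mathbf{p}$. Once that is cleanly in place, the remaining Campbell bookkeeping is transparent and robust, since $\mathbf{p}$ enters neither the coupling nor the counting of bad balls, so the same mixing exponent $\xi=2-\beta$ is obtained for every edge-drawing mechanism satisfying~\eqref{eq:CoxLocFinite}.
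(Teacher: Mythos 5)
Your proof is correct, and the routine parts (stationarity, simplicity, translation invariance, the Mecke/Cauchy--Schwarz bound on the annealed degree via \(\E[\ell_o^2]<\infty\) for \(\beta>2\), and the conditional thinning coupling for~\eqref{G:monotone}) match the paper's treatment in substance. For the mixing estimate the underlying idea is the same --- the covariance is controlled by the probability that some ball of the environment \(\Y\) is large enough to influence both \(B(3r)\) and \(B(xr,3r)\), and the Pareto tail with a quadratic area factor gives \(r^{2-\beta}\) --- but your implementation differs from the paper's. The paper conditions on \(\Lambda\), writes the joint probability as \(\E[f_r(\Lambda_{B(r)})g_r(\Lambda_{B(xr,r)})]\), and introduces three events \(\mathcal{A}_r,\mathcal{B}_r,\mathcal{C}_r\) (small radii near each region, and radii of far points too small to reach either), which are independent because they concern disjoint spatial zones; on their intersection the two conditional probabilities factorise, and Mecke plus Markov bounds the complements by \(c\,r^{2-\beta}\). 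You instead thin \(\Y\) into good and bad points according to the minimal criterion (a ball meeting \emph{both} target regions), build an auxiliary graph \(\scrG^{\mathrm{good}}\) whose two local crossing events are exactly independent by the restriction theorem, and transfer to \(\scrG\) via a coupling at cost \(3\,\P(\Y_{\mathrm{bad}}\neq\emptyset)\), bounded by a single Campbell integral. Your route buys a cleaner formalisation of the independence step (exact independence of auxiliary events rather than a factorisation asserted on a good event) and a single, minimal bad event, at the price of having to set up the graphical coupling carefully --- which you correctly identify and resolve; the paper's route avoids any auxiliary graph but spreads the error over three events with the same resulting exponent. Both arguments are uniform in \(\lambda\) since only the intensity-one environment enters the error term, so either way~\ref{G:P_mixing}\(_\lambda^{2-\beta}\) follows for all \(\lambda>0\). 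The only blemishes in your write-up are cosmetic: the Pareto density should be supported away from zero (irrelevant, since the constraint forces \(\rho\geq 2r\)), and the local crossing events depend on \(\Lambda_{B(3r)}\) and \(\Lambda_{B(xr,3r)}\), as you indeed use.
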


\subsection{Models based on worm percolation}\label{sec:FRI}    
After primarily focusing on continuum models, we now apply our results to an example of a lattice-based model. While the case of independent Bernoulli site-percolated lattices is similar to the Poisson process (cf.\ Remark~\ref{rem:proofMain}), the objective here is to discuss a correlated lattice model based on \emph{percolation of worms}~\cite{rath_rokob_2022_worm}. 

Before diving deeper into this model and its mixing properties, let us comment on another important model for a correlated lattice, namely \emph{random interlacements} (RI)~\cite{Sznitman_2010_RI}; we refer the reader to~\cite{DrewitzEtAll_2014_RI} for an introduction to RI. In a nutshell, this model is a Poisson soup of doubly infinite random walk trajectories in \(\Z^d\), \(d\geq 3\), with a parameter \(u>0\) determining the intensity of these trajectories. The vertex set \(\scrV\) consists of all sites visited by at least one trajectory. When equipped with nearest-neighbour edges, it is obvious that a single trajectory already forms an infinite component and crosses all annuli by transience of the simple random walk in dimensions three and higher. Moreover, it can even be shown that the corresponding graph is connected for all intensities~\cite{Sznitman_2010_RI, CernyPopov2012}. Consequently, our results cannot apply for RI. Indeed, \emph{our} mixing assumption is never fulfilled as any random-walk trajectory that crosses an annulus with radii \(r\) and \(2r\) has a significant chance to also cross another such annulus at distance \(r\). Note, that this is a result of the two annuli and their distance growing at the same rate. In a more standard notion, however, RI is mixing as, for two given vertices, their covariance is determined by the Green function which decays polynomially with exponent \(2-d\) in dimensions \(d\geq 3 \). This shows the importance of our stronger mixing assumption in our results.

However, the RI model can be made mixing in our sense if the length of the random walks is limited, which brings us to percolation of worms. The model is defined as follows. Fix a \emph{step-length distribution} \(\ell\) and an intensity parameter \(\nu>0\). We independently assign each site of \(\Z^d\) a Poisson number of worms with mean \(\nu\). Each worm placed at \(x\in\Z^d\) draws an independent copy of \(\ell\) and performs the corresponding number of simple random walk steps, starting from \(x\). The vertex set \(\scrV\) then consists of all sites of \(\Z^d\) visited by at least one worm. Classically, the graph \(\scrG\) is then constructed by adding all nearest-neighbour edges between vertices of \(\scrV\). Just like in the previous Section~\ref{sec:Cox}, we stick with this choice and focus primarily on the mixing properties but, in principle, other connection mechanisms, e.g.\ those of Section~\ref{sec:wdrcm}, could be considered. Observe that \(\scrV\) is stationary and the associated nearest-neighbour graph is translation invariant and locally finite.

An important special case for the step-length distribution \(\ell\) is the geometric distribution with mean \(T\geq 1\)~\cite{Lawler_2013_IRW}. If we further restrict to \(d\geq 3\), we obtain \emph{finitary random interlacements} (FRI) with intensity \(u:=(T+1)/(2d)\) and walk length \(T\), as introduced in~\cite{Bowen2019}. Notably, FRI converge in distribution to RI with intensity \(2du\) as \(T\to\infty\), giving the model its name. While RI always contain a unique infinite component for all \(u>0\), this holds no longer true for FRI. Intriguingly, there are parameter regimes where there are infinitely many infinite components~\cite{Bowen2019, Procaccia_etAl_2021}.

In our setting, we are not restricted to geometric step-length distributions but may consider general distributions \(\ell\) and study the positivity of \(\wlc\). Fix such a distribution \(\ell\) and recall from Remark~\ref{rem:Assumptions} that the intensity parameter our results apply for is defined by \(\lambda=\P(o\in\scrV)\). That is, \(\lambda\) is given by the probability that the origin is visited by at least one worm. Note that the \emph{expected} number of walks that traverse \(o\) is given by \(\nu\E[\ell] \P_o^{\ell}(\tilde{H}_o=\infty)\), where \(\P_o^{\ell}(\tilde{H}_o=\infty)\) denotes the probability that a worm with step-length distribution \(\ell\) starting in \(o\) never returns, see~\cite{DrewitzEtAll_2014_RI,Bowen2019}. Further, \(\lambda\) is bounded from below by \(1-e^{-\nu}\), the probability that at least one worm has been assigned to \(o\) in the beginning. Therefore, \(\nu\sim 1-e^{-\nu}\leq \lambda\leq C\nu\) and \(\lambda\) can be appropriately varied by adjusting \(\nu\) whenever \(\ell\) has finite expectation, which we assume in the following.

It is shown in~\cite{rath_rokob_2022_worm} that \(\lc>0\) if \(\ell\) has finite variance in all dimensions and \(\lc=0\) in dimensions \(\d\geq 5\) if \(\ell\) has infinite second moment and a slightly stronger tail assumption on \(\ell\) is satisfied. For annulus-crossings, however, the situation is quite different. While the results of~\cite{rath_rokob_2022_worm} are independent of the dimension (provided \(d\geq 5\)), our findings depend on the dimension; a characteristic that has not previously be shown in our analysis.

\begin{corollary}[Percolation of worms] \label{corol:FRI}
	Consider percolation of worms with integrable step-length distribution \(\ell\) and nearest-neighbour connection rule.
	\begin{enumerate}[(i)]
		\item If \(\ell\) has finite \(d\)-th moment, i.e., \(\sum_{n\in\N} n^d \ell\{n\}<\infty\), then \(\wlc>0\), and
		\item if \(\ell\) has infinite \(d\)-th moment, i.e., \(\sum_{n\in\N} n^d \ell\{n\}=\infty\), then \(\wlc=0\).
	\end{enumerate}
\end{corollary}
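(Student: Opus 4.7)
Since $\scrG$ is the nearest-neighbour graph on $\scrV$, the event $\cL(r,c)$ is empty whenever $cr>1$, so Property~$\overline{\scrL}_\lambda$ is automatic for all $\lambda$. My plan for part~(i) is to apply Theorem~\ref{thm:main}(i), which reduces the task to verifying the uniform mixing Property~$\overline{\scrM}_\lambda$. For part~(ii), Theorem~\ref{thm:main}(ii) cannot be applied since $\cL(r,1)=\emptyset$ for $r>1$, and I will argue directly by constructing annulus-crossing paths out of the worm traces themselves.

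For the uniform mixing in part~(i), fix $x\in\R^d$ with $|x|\geq 10$, so that $B(o,3r)$ and $B(xr,3r)$ lie at mutual distance at least $4r$. I will split the independent Poisson family of worms into three disjoint subfamilies according to whether the associated trace meets only $B(o,3r)$, only $B(xr,3r)$, or both balls, and denote the third class by $W_3$. Conditionally on $\{W_3=\emptyset\}$, the local events $\cG(r)$ and $\cG(xr,r)$ depend on disjoint independent Poisson subfamilies and are thus independent, which gives
\[|\operatorname{Cov}_\lambda(\1_{\cG(r)},\1_{\cG(xr,r)})|\leq 2\,\P_\lambda(W_3\neq\emptyset)\leq 2\,\E_\lambda[|W_3|].\]
A worm of length $n$ starting at $y$ contributes to $W_3$ only if the walk has range of diameter $D_n\geq 4r$, and for each fixed walk the set of valid starting points $y$ has cardinality at most $C(r+D_n)^d$. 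Combined with the deterministic bound $D_n\leq n$ and the translation invariance of the walk distribution, this yields
\[\E_\lambda[|W_3|]\,\leq\, C'\,\lambda\,\E\!\bigl[\ell^d\,\1\{\ell\geq 4r\}\bigr],\]
which, under $\E[\ell^d]<\infty$, tends to zero as $r\to\infty$ by dominated convergence, uniformly for $\lambda$ in any bounded interval. Theorem~\ref{thm:main}(i) then yields $\wlc>0$.

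For part~(ii), the plan is to construct annulus-crossing paths from a single sufficiently long worm. Since every worm trace is a connected subgraph of $\scrV$, a worm starting in $B(r/2)$ whose walk attains maximum displacement at least $5r/2$ provides an annulus-crossing path inside $B(3r)$. The expected number of such worms is at least $c\lambda r^d\,\P(D_\ell\geq 5r/2)$, and a second-moment argument for the underlying Poisson count converts a positive lower bound on this expectation into a positive lower bound on the crossing probability. The main obstacle is showing that $\E[\ell^d]=\infty$ prevents $r^d\,\P(D_\ell\geq 5r/2)$ from vanishing along every subsequence $r\to\infty$. Since the simple-random-walk bound $\P(D_n\geq r)\geq c$ only holds for $n\geq Cr^2$, a single-scale analysis would require the tail condition $\P(\ell\geq r^2)\gtrsim r^{-d}$ along some subsequence, which is strictly stronger than $\E[\ell^d]=\infty$. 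Bridging this gap calls for a dyadic decomposition of the worm length together with a multi-scale combinatorial argument on overlapping worm traces that exploits the full non-summability $\sum_n n^{d-1}\P(\ell\geq n)=\infty$; this multi-scale step is the hardest part of the corollary.
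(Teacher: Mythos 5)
Your proof of part~(i) matches the paper's argument: both bound the covariance in the mixing condition by the probability that a single worm bridges the two local regions, and both use the deterministic range bound $D_n\le n$ to arrive at a quantity of order $\lambda\,\E[\ell^d\1\{\ell\ge cr\}]$, which vanishes by dominated convergence once $\E[\ell^d]<\infty$. Your accounting of admissible starting locations is slightly more careful than the paper's phrasing (which only counts worms starting in $B(r)$), but the conclusion is the same.

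For part~(ii), your setup---a single worm of large displacement supplies the crossing, and a second-moment / Poisson argument upgrades a divergent mean to a crossing probability bounded away from zero---is also exactly the paper's, which implements it via a thinning scheme keeping one independent worm per surviving site and bounding the number of crossings from below by a Binomial. You are right to pause at the random-walk scaling issue, and in fact the paper's written proof contains the same gap you flag. The paper asserts
\[
\P_o^\ell\big(o\xleftrightarrow[]{}B(2r)^{\textsf{c}}\big)\asymp\sum_{n\ge 2r}\P_o^{\ell=n}\big(\sup_{k\le n}|S_k|\ge 2r\big)\ell\{n\}\asymp \ell[2r,\infty),
\]
justifying the lower bound of the second $\asymp$ by noting that $\P_o^{\ell=n}(\sup_{k\le n}|S_k|\ge 2r)$ is bounded away from zero for each fixed $r$. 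But that infimum over $n\ge 2r$ is attained essentially at $n=2r$, where the walk must be ballistic and the probability is of order $(1/2d)^{2r}$, so the implied constant degenerates exponentially in $r$; the honest lower bound is the one you write, $\tfrac12\,\ell[Cr^2,\infty)$, leading to an expected crossing count of order $r^d\ell[Cr^2,\infty)$ rather than $r^d\ell[2r,\infty)$. Moreover, even granting the paper's claimed rate, the assertion that $r^d\ell[2r,\infty)\to\infty$ does not follow from $\sum_n n^d\ell\{n\}=\infty$: the example $\ell[n,\infty)=1/(n^d\log n)$ has infinite $d$-th moment yet $r^d\ell[r,\infty)=1/\log r\to 0$. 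The paper's proof is therefore single-scale and single-worm, and as written it does not close the gap you identify; the dyadic multi-worm construction you sketch would be new material rather than something the paper supplies.
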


\subsection[\textit{k}-nearest-neighbour percolation and other intensity-independent models]{\textit{k}-nearest-neighbour percolation and other $\boldsymbol{\lambda}$-independent models} \label{sec:kNN}
One may question whether the assumption~\ref{G:noLongEdgeUnif}\(_\lambda\) (resp.~\ref{G:mixing}\(_\lambda\)) could be relaxed and replaced by the assumption \(\{\forall \lambda'\leq \lambda,\) \ref{G:noLongEdge}\(_\lambda\}\) (resp.~\ref{G:lambda-mix}\(_\lambda\)) in the statement of Theorem~\ref{thm:main}. We show that this is not possible by discussing a simple class of models where the $\lambda$-dependence of the model is only superficial. Consider any graph $\scrG_1=(\scrV_1,\scrE_1)$ on $\R^d$ obeying our settings, but defined only for $\lambda=1$. Then, one may (re)define the model for $\lambda\ne 1$ by keeping the exact same graph structure, but scaling all vertex locations by a factor $\lambda^{-1/d}$. That is,
\[
\scrV_\lambda:=\left\{\lambda^{-1/d} x\colon x\in \scrV_1\right\}, \qquad \scrE_\lambda:=\left\{\{\lambda^{-1/d} x, \lambda^{-1/d}y\}\colon \{x,y\}\in \scrE_1\right\}.
\]
This produces a model \((\scrG_\lambda:\lambda>0)\) that is perfectly legitimate in our general settings. However, a phase-transition in \(\lambda\) is clearly impossible. For example, let $\scrG_1$ be a classical WDRCM with no long edges (i.e., \ref{G:noLongEdge}\(_1\) is satisfied) but for which the annuli are crossed (still for $\lambda=1$). With our definition of $\G_\lambda$, we immediately infer the Properties~\ref{G:noLongEdge}\(_\lambda\) and \ref{G:mixing}\(_\lambda\) for all values of $\lambda$ (for mixing, note the model satisfies the independent settings condition and thus has no relevant correlations), but still $\wlc=0$. It is clear that this model does neither satisfy Monotonicity~\eqref{G:monotone} nor~\ref{G:noLongEdgeUnif}\(_\lambda\). Another interesting example in this class of models is \emph{\(k\)-nearest-neighbour percolation}~\cite{BalisterBollobas_2013}, which contains an infinite outgoing component if \(k\) is large enough, independently of \(\lambda\). This model also satisfies the  Properties~\ref{G:noLongEdge}\(_\lambda\) and~\ref{G:lambda-mix}\(_\lambda\) for every $\lambda>0$, yet it neither satisfies ~\ref{G:noLongEdgeUnif}\(_\lambda\) nor~\ref{G:mixing}\(_\lambda\) for any $\lambda>0$.

	
\section{Proofs}\label{sec:proofs}
	\subsection{Proofs of the results in Section~\ref{sec:mainResults}} \label{sec:proofsMainResults}
	We give the proofs of our main Theorems~\ref{thm:main},~\ref{thm:equiStatement}, and~\ref{thm:Diamter} in this section. We start with the proof of Theorem~\ref{thm:main} as the other results rest on either its proof or its statement. To this end, we adapt a renormalisation technique developed by Gou\'{e}r\'{e} for the Poisson--Boolean model~\cite{Gouere08}.
		
	\begin{proof}[Proof of Theorem~\ref{thm:main}.]
		Let us start with the proof of Part~(ii). Fix any \(\lambda>0\). Then, as \ref{G:noLongEdge}\(_\lambda\) is not satisfied, we have \(\P(\cL(r,3))>c\) for all \(r\geq 1\). The event \(\cL(r,3)\) implies the existence of an edge connecting a vertex in \(B(r)\) to some vertex at distance at least \(3r\). The latter vertex is located outside \(B(2r)\) by necessity. Therefore 
		\[
			\P_{\lambda}\big(B(r)\xleftrightarrow[]{}B(2r)^\textsf{c}\big)\geq \P_\lambda(\cL(r,3))>c
		\] 
		and thus \(\lambda>\wlc\). As \(\lambda\) can be chosen arbitrarily small, this yields \(\wlc=0\), proving Part~(ii).
		
		In order to prove Part~(i) of the theorem, we have to show that \(\wlc>0\) whenever \(\scrG\) fulfils the Properties~\ref{G:mixing}\(_{\bar\lambda}\) and~\ref{G:noLongEdgeUnif}\(_{\bar\lambda}\) for some \(\bar\lambda>0\). Let us denote the annulus-crossing event by
		\begin{equation}\label{eq:eventAnnulusCross}
			\cE(x,r) = \big\{B(x,r)\xleftrightarrow[]{} B(x,2r)^{\textsf{c}}\big\} 
		\end{equation}
		and abbreviate \(\cE(r)=\cE(o,r)\). Then the claim is equivalent to showing \(\P_\lambda(\cE(r))\to 0\), as \(r\to\infty\), for \(\lambda\) sufficiently small. To this end, we adapt the renormalisation scheme of~\cite{Gouere08}. Let us introduce two additional events. First, we let
		\begin{equation*}
			\cF(r) := \{\exists \x\sim \y\colon |x|<2r, |x-y|>r/10\} = \cL(2r,1/20),
		\end{equation*}
		be the event that there exists an edge longer than \(r/10\) incident to some vertex located in \(B(2r)\). Secondly, we recall the local annulus-crossing event \(\cG(x,r)\) with \(\cG(r)=\cG(o,r)\), introduced in~\eqref{eq:local_annulus_cross}. In words, the event \(\cE(r)\) defines the annulus crossing subject to our main result. The event \(\cF(r)\) describes the presence of long edges in the considered annulus. In particular, if \(\cF(r)\) is not fulfilled, any annulus crossing cannot use far apart vertices. The event \(\cG(r)\) can then be seen as a localised variant of \(\cE(r)\) when no such far apart vertex is used and any path leaving \(B(2r)\) thus ends in \(B(3r)\setminus B(2r)\). Naturally, \(\cG(r)\subset\cE(r)\). The main step in the renormalisation is then to argue that either long edges in the sense of \(\cF(r)\) are present or the localised annulus crossing event occurs twice almost independently on a smaller scale due to the mixing property. To make this precise, let us denote by \(\partial B \) the boundary of a set \(B\subset\R^d\). Let further \(I,J\subset\R^d\) be two finite sets satisfying \(I\subset\partial B(10)\) and \(J\subset\partial B(20)\) such that 
		\[
			\partial B(10)\subset\bigcup_{i\in I}B(i,1/2), \quad \text{ and } \quad \partial B(20)\subset\bigcup_{j\in J} B(j,1/2).
		\]
        Put differently, \(I\) and \(J\) induce a covering of the spheres \(\partial B(10)\) and \(\partial B(20)\), respectively, with balls of radius $1/2$. Note that this particularly implies that the annulus \(B(10.5)\setminus B(9.5)\) is completely covered by \(\bigcup_{i\in I} B(i,1)\), that is, using radius-\(1\) balls instead of radius-\(1/2\) balls. This covering property particularly implies that any vertex located in said annulus is at distance at most \(1\) to some point in \(I\). The same is true for \(B(20.5)\setminus B(19.5)\) and the union \(\bigcup_{j\in J}B(j,1)\). The outlined key observation is then 
        \begin{equation}\label{eq:FactoriseP(G)}
	    	\cE({10 r})\setminus \cF(10r)\subset \Big(\bigcup_{i\in I}\cG(r\cdot i,r )\Big)\cap\Big(\bigcup_{j\in J}\cG(r\cdot j,r)\Big),
		\end{equation}
		where we use the notation \(r\cdot i\) for the product of \(r\) and \(i\) to put emphasis on the fact that \(i\) is an element of \(\R^d\) while \(r\in\R\). Equation~\eqref{eq:FactoriseP(G)} follows from the fact that on \(\cE({10 r})\setminus \cF(10r)\), there exists a path from a vertex located in \(B(10r)\) to some vertex located outside of \(B(20r)\) using no edges longer than \(r\). 
        Therefore, at least one vertex of said path must lie within distance \(r/2\) to the sphere \(\partial B(10 r)\), and thus within distance \(r\) to some point in $r\cdot I$. Put differently, this vertex is located in \(B(r\cdot i, r)\) for some $i\in I$. Restarting the path from this vertex, it realises the localised annulus-crossing event \(\cG(r\cdot i, r)\), as the original path must ultimately reach a vertex located in \(B(20 r)^\textsf{c}\) using only edges shorter than \(r\), and, in particular, it must first reach some vertex located in \(B(r\cdot i, 3r)\setminus B(r\cdot i, 2r)\) passing only vertices located in \(B(r\cdot i, 3r)\). Again, we have used that all edges are shorter than \(r\). By a similar argumentation also \(\cG(r\cdot j,r)\) occurs for some \(j\in J\), justifying~\eqref{eq:FactoriseP(G)}. The above argument is sketched in Figure~\ref{fig:sketch}. 
		\begin{figure}[t!]
    		\begin{center}
    			\resizebox{1\textwidth}{!}{
   				\begin{minipage}{\textwidth}
    				\centering
    				\begin{tikzpicture}[scale = 0.5, every node/.style={scale=0.4}]
        				\clip (-2,1.5) rectangle ++(18,-15);
        				\draw[] (0,0) circle(4);
        				\draw[] (0,0) circle(4.5);
        				\draw[] (0,0) circle(14);
        				\draw[] (0,0) circle(3.5);
        				\node (S) at (2.6,-2.65)[scale = 0.5, circle, fill = red, color =red,  label = {},draw]{};
        				\node (T1) at (8, -7.5)[draw, scale = 0.1]{};
        				\node (T2) at (11.8,-6)[draw, circle, scale = 0.5, color =blue, fill = blue]{};
        				\node (T3) at (14,-7)[draw, circle, scale = 0.3, color =black, fill = black]{};
        				\draw[decorate, decoration = {random steps, segment length = 1 mm}] (S)--(T1);
        				\draw[decorate, decoration = {random steps, segment length = 1 mm}] (T1)--(T2);
        				\draw[decorate, decoration = {random steps, segment length = 1 mm}] (T2)--(T3);
        				\draw (3.53,-1.9) circle(1);
        				\draw (2.19,-3.348) circle(1);
        				\draw (3.881,-0.968) circle(1);
        				\draw (4,0.025) circle(1);
        				\draw (1.3,-3.786) circle(1);
        				\draw (0.31,-3.99) circle(1);
        				\draw[red] (2.95,-2.7) circle(1);
        				\draw[red] (2.95,-2.7) circle(2);
        				\draw[red] (2.95,-2.7) circle (3);
       		 			\draw (12.874,-5.5) circle(1);
        				\draw (11.96,-7.277) circle(1);
        				\draw (11.41,-8.11) circle(1);
        				\draw (10.804,-8.904) circle(1);
        				\draw (13.23,-4.567) circle(1);
        				\draw (13.526,-3.611) circle(1);
        				\draw[blue] (12.445,-6.405) circle(1);
        				\draw[blue] (12.445,-6.405) circle(2);
        				\draw[blue] (12.445,-6.405) circle(3);
        				\draw (-1.3,-3.25) node[anchor = north, scale = 1.1, rotate = 350]{$\partial B(9.5 r)$};
        				\draw (-1.3,-3.8) node[anchor = north, scale = 1.1, rotate = 350]{$\partial B(10 r)$};
        				\draw (-1.3,-4.4) node[anchor = north, scale = 1.1, rotate = 350]{$\partial B(10.5 r)$};
        				\draw (13,1.5) node[anchor = north, scale = 1.1]{$\partial  B(20r)$};
    				\end{tikzpicture}
    			\end{minipage}
    			}
    		\end{center}
    		\caption{Sketch of~\eqref{eq:FactoriseP(G)} in \(d=2\). A path starting inside \(B(10r)\) and leaving \(B(20r)\), where no edge longer than \(r\) is used. The red vertex on the path is close to the center of the red ball of the covering of \(\partial B(10r)\) and it is the starting point for the event \(\cG(\)red balls\()\). Further, the covering of the annulus \(B(10.5 r)\setminus B(9.5r)\) is indicated. The same applies to the blue vertex on the path, which lies close to the center of the blue ball of the covering of \(\partial B(20r)\).}
    		\label{fig:sketch}
		\end{figure}
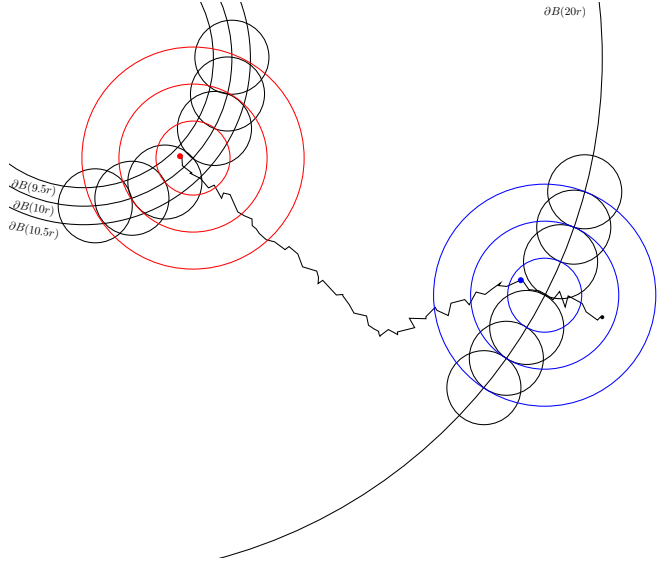
       	As a result, we obtain
		\[
			\P_\lambda\big(\cE(10 r)\setminus \cF(10 r)\big)\leq \sum_{i\in I, \, j\in J}\P_\lambda\big(\cG(r\cdot i,r)\cap \cG(r\cdot j,r)\big).
		\]
		Using translation invariance~\ref{G:Translation}, we infer, writing \(C_1=(\sharp I)(\sharp J)\), 
		\begin{equation*}
			\begin{aligned}
				\sum_{i\in I, \, j\in J} \P_\lambda\big(\cG(r\cdot i,r)\cap \cG(r\cdot j,r)\big) 
				& 
				\leq C_1 \max_{i\in I, j\in J}\Big(\P_\lambda\big(\cG(r\cdot i,r)\cap \cG(r\cdot j,r)\big)\Big) 
				\\ & \leq C_1\max_{i\in I, j\in J}\big(\operatorname{Cov}_\lambda(\1_{\cG(o,r)}, \1_{\cG(r\cdot (j-i), r)})\big) + C_1 \P_\lambda(\cG(r))^2.
			\end{aligned}
		\end{equation*}	
		Now, as \(I\) and \(J\) are finite sets, we can use the mixing Property~\ref{G:mixing}\(_{\bar\lambda}\) and write
		\begin{equation}\label{eq:mixingError}
			\operatorname{cov}(r) := C_1\sup_{\lambda<\bar\lambda}\max_{i\in I, j\in J}\big(\operatorname{Cov}(\1_{\cG(o,r)},\1_{\cG(r\cdot (j-i), r)})\big) = o(1).
		\end{equation}
		Using \(\cG(r)\subset\cE(r)\), we hence obtain
		\begin{equation} \label{eq:renorm}
			\begin{aligned}
				\P_\lambda(\cE(10r)) 
				& 
				\leq \P_{\lambda}(\cF(10r)) +\operatorname{cov}(r) + C_1 \P_\lambda(\cE(r))^2,
			\end{aligned}
		\end{equation}
		which is key in showing \(\P_{\lambda}(\cE(r))\to 0\) for sufficiently small \(\lambda\). Recall that we work under the Assumption~\ref{G:noLongEdgeUnif}\(_{\bar\lambda}\) for some \(\bar\lambda>0\). Hence, \(\P_\lambda(\cF(r))\to 0\) uniformly for all \(\lambda<\bar\lambda\). Therefore, there exists \(r_0> 1\) such that 
		\[
			C_1\big(\sup_{\lambda<\bar\lambda}\P_\lambda(\cF(r))+\operatorname{cov}(r)\big) \leq 1/4, \qquad \text{ for all }r\geq r_0.
		\]
		Let us define the following two functions on \([r_0,\infty)\):
		\begin{equation}\label{eq:def_f_g}
			g(r):= C_1\big(\sup_{\lambda<\bar\lambda}\P_\lambda(\cF(r))+\operatorname{cov}(r)\big) \quad \text{ and } \quad f_\lambda(r) = C_1 \P_{\lambda}(\cE(r)),
		\end{equation}
		and choose
		\begin{equation}\label{eq:lambdaCondition}
			\lambda < \bar\lambda \wedge \tfrac{1}{2\, C_1 \, (10 r_0)^d \omega_d},
		\end{equation}
		where \(\omega_d\) denotes the volume of the \(d\)-dimensional unit ball. Since \(\lambda < \bar\lambda\), we have \(g(r)\leq 1/4\). Further, since \(\lambda < 1/(2C_1(10 r_0)^d \omega_d)\), we have with Markov's inequality
		\begin{equation}\label{eq:useOfIntensity}
			f_\lambda(r) = C_1\P_\lambda(\cE(r))\leq C_1\E_\lambda[\sharp(\scrV \cap B(r))]=C_1 \, \lambda \; \omega_d \,  r^d \leq 1/2,   
		\end{equation}
		for all \(r\in[r_0,10 r_0]\), where the second to last step follows from the fact that the intensity measure of the underlying point process is the Lebesgue measure multiplied by \(\lambda\). Additionally,~\eqref{eq:renorm} implies
		\begin{equation}\label{eq:renorm_f_g}
			f_\lambda(r) \leq g(r) + f_\lambda(r/10)^2 \quad \text{ for all }r\geq 10 r_0. 
		\end{equation}
		Together with the established bounds on \(f_\lambda\) and \(g\), we infer \(f_\lambda(r)\leq 1/2\) for all \(r\geq r_0\). This can for instance easily be seen by proving inductively \(f_\lambda(r)\leq 1/2\) for all \(r\in[10^{n-1}r_0,10^n r_0]\). Applying the bound \(f_\lambda\leq 1/2\) as well as~\eqref{eq:renorm_f_g} iteratively yields for all \(n\in\N\) and \(r\in[r_0,10 r_0]\),
		\begin{equation*}
			\begin{aligned}
				f_\lambda(10^{n} r) & \leq g(10^{n}r)+ \frac{f_\lambda(10^{n-1}r)}{2} \leq \dots \leq \sum_{j=1}^n \frac{g(10^{j}r)}{2^{n-j}} + 2^{-n-1}.		
			\end{aligned}
		\end{equation*}
		Finally, for \(\varepsilon>0\), we can choose \(n\) large enough such that
		\begin{equation*}
			\begin{aligned}
				\sum_{j=1}^n \frac{g(10^{j}r)}{2^{n-j}} & \leq \frac{1}{4}\sum_{j=1}^{n/2}  2^{-n+j} + \big(\max_{n/2\leq k\leq n }g(10^k r)\big)\sum_{j=n/2}^n 2^{-n+j} \leq 2^{-n/2} + 2 \max_{n/2\leq k\leq n}g(10^k r)	\leq \varepsilon,		
				\end{aligned}
		\end{equation*}
		uniformly for \(r\in[r_0, 10r_0]\). Note here, that the \(\max g(10^k r)\) term on the right-hand side is small as the function \(g\) captures exactly the two assumptions of the theorem as it is defined via the corresponding events, cf.~\eqref{eq:def_f_g}. In conclusion, \(\P_\lambda(\cE(r))=f_\lambda(r)/C_1\to 0\) as \(r\to \infty\), for all \(\lambda\) satisfying~\eqref{eq:lambdaCondition}, proving Part~(i). This finishes the proof of Theorem~\ref{thm:main}.
		\end{proof} 
		
		\begin{remark}\label{rem:proofMain}
			We performed the above proof for the continuum point process case. If the vertex locations are given by a lattice subset that fulfils the assumptions of Remark~\ref{rem:Assumptions}, the proof can be performed in the same way. Firstly, the change of norm to \(||\cdot ||_\infty\) guaranties that the scaling properties of the sets \(I\) and \(J\) remain true. More precisely, points that induce a covering of the sphere \(\partial B(c)\) with radius \(1/2\) balls, also induce a covering of the sphere \(\partial B(c r)\) with radius \(r/2\) balls. Secondly, we use the given intensity measure to bound the function \(f_\lambda\) in~\eqref{eq:useOfIntensity}. However, as balls in \(\R^d\) and \(\Z^d\) have volume of the same order and only differ by a constant factor, the bound on \(f_\lambda\) remains true once the involved constants are adjusted by the definition of \(\lambda\) in Remark~\ref{rem:Assumptions}. 		
		\end{remark}
		
		We continue with the proof of Theorem~\ref{thm:Diamter} as it builds on the arguments derived in the previous proof and postpone the proof of Theorem~\ref{thm:equiStatement} to the end of the section.
		
		\begin{proof}[Proof of Theorem~\ref{thm:Diamter}.]
			Recall that we work under the assumption \ref{G:P_mixing}\(_\lambda^\xi\) for some \(\xi_\lambda<0\) and recall the decay exponent \(\zeta_\lambda<0\) describing the polynomial decay of \(\P_\lambda(\cL(r,1))\). As \(\P_{\lambda}(\cE(r))\) is a decreasing function in \(r\), the claim of Theorem~\ref{thm:Diamter} is equivalent to showing that \(r^{s-1}\P_{\lambda}(\cE(r))\) is integrable for all \(s<|\zeta_\lambda \vee \xi_\lambda|\), where \(\cE(r)\) denotes the annulus-crossing event as introduced in~\eqref{eq:eventAnnulusCross}. Note that the polynomial mixing condition can equivalently be written as 
			\[
				\big|\operatorname{Cov}_\lambda\big(\1_{\cG(r)},\1_{\cG(xr,r)}\big)\big| \leq \ell_\lambda(x,r) r^{\xi_\lambda}, 
			\]  
				where \(r\mapsto\ell_\lambda(x,r)\) is a slowly varying function~\cite{Bingham1987}. Hence, the mixing term in~\eqref{eq:mixingError} now reads
			\[
				\operatorname{cov}_\lambda(r) \leq \ell_\lambda(r) r^{\xi_\lambda}, \quad \text{ where } \quad \ell_\lambda(r)= \sum_{i\in I, j\in J} \ell(j-i, r).			
			\]
			Note that \(\ell(r)\) again is slowly varying. As above there exists \(r_0^{(\lambda)}\) such that the, now \(\lambda\)-dependent, function \(g_\lambda\) satisfies
			\[
				g_\lambda(r)= C_1\big(\P_\lambda(\cF(r))+\operatorname{cov}_\lambda(r)\big)<1/4
			\]
			on \([r_0^{(\lambda)},\infty)\). Recall further the function \(f_{\lambda}\) from the previous proof, cf.~\eqref{eq:def_f_g}. Essentially, we have reestablished the same setting as above but now for the fixed value of \(\lambda\) instead of taking the supremum over \(\lambda<\bar\lambda\). By definition of \(g_\lambda(r)\) and as \(\P_\lambda(\cF(r))\asymp\P_\lambda(\cL(r,1))\), we have that \(r^{s-1}g_\lambda(r)\) is integrable for all \(s<|\zeta_\lambda \vee  \xi_\lambda|\). To prove the claimed integrability of \(\P_\lambda(\cE(r))=f_\lambda(r)/C_1\), observe first that \(f_\lambda(r)=o(1)\) for \(\lambda<\wlc\). In particular, \(f_\lambda(r)\leq 10^{-s-1}/2\) for all \(r\geq R_\lambda/10\) for some sufficiently large \(R_\lambda\geq r_0^{(\lambda)}\). Following the arguments in~\cite{Gouere08}, we find for \(S>10 R_\lambda\), using~\eqref{eq:renorm_f_g},
			\begin{equation*}
				\begin{aligned}
					\int_{R_\lambda}^{S} r^{s-1} f_\lambda(r)\d r 
					& 
					\leq \int_{R_\lambda}^S r^{s-1}f_\lambda(r/10)^2\d r +\int_{R_\lambda}^{S} r^{s-1} g_\lambda(r) \d r
					\\ &
					\leq \tfrac{1}{2} \int_{R_\lambda/10}^{S/10} r^{s-1} f_\lambda(r) \d r + \int_{R_\lambda}^{S} r^{s-1} g_\lambda(r) \d r 
					\\ &
					\leq \tfrac{1}{2}\int_{R_\lambda/10}^{R_\lambda} r^{s-1} f_\lambda(r) \d r + \tfrac{1}{2}\int_{R_\lambda}^{S} r^{s-1}f_\lambda(r) \d r + \int_{R_\lambda}^\infty r^{s-1} g_\lambda(r) \d r,
				\end{aligned}
			\end{equation*}
			where we used the change of variables \(r\mapsto 10r\) together with the bound \(f_\lambda(r)\leq 10^{-s-1}/2\) in the second step. Sending \(S\to\infty\), the established integrability of \(r^{s-1}g_\lambda(r)\) then yields
			\begin{equation*}
				\begin{aligned}
					\tfrac{1}{2}\int_{R_\lambda}^\infty r^{s-1}f_\lambda(r) \leq \tfrac{1}{2}\int_{R_\lambda/10}^R r^{s-1} f_\lambda(r) \d r + \int_{R_\lambda}^\infty r^{s-1} g_\lambda(r) \d r <\infty,
				\end{aligned}
			\end{equation*}
			for all \(s<|\zeta_\lambda \vee \xi_\lambda|\). This concludes the proof of Theorem~\ref{thm:Diamter}.
		\end{proof}
		
		Next, we give the proof of Theorem~\ref{thm:equiStatement} stating that the rareness of long edges and the positivity of \(\wlc\) are equivalent in the independent setting.

		\begin{proof}[Proof of Theorem~\ref{thm:equiStatement}.]
			Recall that in the independent setting, and for \(\lambda<\lambda'\), the graph \(\scrG_\lambda\) can be constructed from \(\scrG_{\lambda'}\) by performing site percolation with retention parameter \(\lambda/\lambda'\), as explained below Definition~\ref{def:IndS}. Hence, we have
			\begin{equation*}
				\P_{\lambda}(\cL(r,1)) \le \P_{\lambda'}(\cL(r,1)).
			\end{equation*}
			as well as
			\begin{equation*}
                \begin{aligned}
                    \P_{\lambda}(\cL(r,1)) 
                    &= 
                        \P_{\lambda'}\big(\exists \x\sim \y\colon |x|<r, |x-y|>r, \x,\y \text{ survive percolation}\big) 
                    \\ &
                        \geq \P_{\lambda'}\big(\exists \x\sim \y\colon |x|<r, |x-y|>r, \ \substack{\text{of these edges the one with endpoint} \\ \text{closest to the origin survives percolation}}\big) 
                    \\ &
                        =\big(\tfrac{\lambda}{\lambda'}\big)^2\P_{\lambda'}(\cL(r,1)),   
                \end{aligned}
			\end{equation*}
			where an edge survives percolation if both of its endpoints do. Both inequalities combined prove the independence of~\ref{G:noLongEdge}\(_\lambda\) and~\ref{G:noLongEdgeUnif}\(_{\lambda}\) from \(\lambda\) and show that both probabilities have the same tail. The equivalence of~\ref{G:noLongEdgeUnif}\(_{1}\) and \(\wlc>0\) is then an immediate consequence of Theorem~\ref{thm:main}.
		\end{proof}
		
		
		\subsection{Proofs of the results in Sections~\ref{sec:wdrcm} and~\ref{sec:correlatedExamples}}\label{sec:proofsExamples}
		In this section we prove the results established for the WDRCM and all the other examples.

		\subsubsection*{Proofs for the WDRCM of Section~\ref{sec:deff}}
		Recall first the framework of the  WDRCM. In particular, recall that vertices \(\x=(x,u_x)\) are given via a homogeneous Poisson point process on \(\R^d\times(0,1)\) and recall the connection mechanism given in~\eqref{eq:classicPhi}. Key in proving the results for WDRCMs and, particularly, Theorem~\ref{thm:WDRCM} is to properly relate the tail behaviour of \(\P_{\lambda}(\cL(r,1))\) with the exponent \(\zeta\), derived in~\eqref{eq:zeta}. To this end, we need some auxiliary result about the involved vertex marks. Indeed, when we introduced \(\zeta\) heuristically, we claimed that the empirical distribution of the `weak' vertices in a set consisting of \(r^d\) vertices should roughly be \(r^d \1_{[r^{-d+d\zeta},1]}\) to argue that \(\zeta\) describes the occurrence of long edges appropriately. Before giving the proof of Theorem~\ref{thm:WDRCM}, we shall make this claim precise. 
        
        Recall that, given the vertices' locations, the associated vertex marks form an independent collection of uniformly distributed random variables. We are thus interested in some kind of regular behaviour of independent uniforms guaranteeing that the marks are evenly distributed over the interval.  To this end, consider a collection of independent random variables \(U=(U_1,\dots,U_n)\) distributed uniformly on \((0,1)\) and denote their joint distribution by \(P\). We follow the arguments of~\cite{GraLuMo2022} and fix a parameter \(0<a<1\) and define for all \(i=1,\dots,\lfloor n^{1-a}\rfloor\) 
		\[
			N^a_n(i) = \sum_{j=1}^n \1\Big\{U_j\leq\frac{i}{\lfloor n^{1-a}\rfloor}\Big\}.
		\]
		We say that \(U\) is \emph{\(a\)-regular} if
		\[
			N^a_n(i)\geq \frac{i n}{2\lfloor n^{1-a}\rfloor}, \quad \forall \, i=1,\dots,\lfloor n^{1-a}\rfloor.
		\]
        The following lemma shows that \(a\)-regularity provides the desired regularity of the vertex marks. 
        
        \begin{lemma} \label{lem:a-regular} ~\
            \begin{enumerate}[(i)]
                \item 
                    For the collection \(U=(U_1,\dots,U_n)\) of independent uniform distributions with joint distribution \(P\) and any \(a\in(0,1)\), we have
                    \[
                        P(U \text{ is } a\text{-regular}) \leq e^{-c n^a},
                    \]
                    for some constant \(c>0\) and \(n\in\N\) large enough.
                \item Consider an \(a\)-regular realisation \(u=(u_1,\dots,u_n)\) with \(n> 2^{1/(1-a)}\). Then, for any non-increasing function \(\varphi\colon [0,1]\to[0,\infty)\), we have 
                    \[
                        \sum_{j=1}^n \varphi(u_j)\geq \frac{n}{2}\int_{2n^{a-1}}^1 \varphi(t) \, \d t.
                    \]
            \end{enumerate}
            
        \end{lemma}
\begin{proof}
	A simple calculation yields
		\[
			E [N^a_n(i)] = \frac{in}{\lfloor n^{1-a}\rfloor},
		\]
	so that the standard Chernoff bound for independent Bernoulli random variables implies
	\[
		P\big(N^a_n(i)<\tfrac{1}{2}E[N^a_n(i)]\big)\leq {\rm e}^{-c' i n^a}
	\]
	for some constant \(c'>0\). Therefore, a union bound yields
	\begin{equation*}
		\begin{aligned}
			P(U \text{ is not } a\text{-regular})\leq n^{1-a}{\rm e}^{-c' n^a} \leq {\rm e}^{-c n^a},
		\end{aligned}
	\end{equation*}
	for some \(c<c'\) and \(n\) large enough as \(a<1\); thus proving~(i).
    
	To prove~(ii), we first derive bounds on the empirical distribution function \(F\) of an \(a\)-regular collection \(u=(u_1,\dots,u_n)\). Namely, we have on the event of \(a\)-regularity, for any \(t\in(0,1)\) and \(n\geq 2\),
	\begin{equation*}\label{eq:empiricalDistr}
		\begin{aligned}
			n F(t) 
			& 
				= \sum_{i=1}^n \1\{u_i\leq t\} \geq \sum_{i=1}^{\lfloor n^{1-a}\rfloor}N^a_n(i-1)\1\big\{i-1<\lfloor n^{1-a}\rfloor t\leq i\big\} = N^a_n(\lfloor t \lfloor n^{1-a}\rfloor\rfloor) 
			\\ &
				\geq \frac{n \lfloor t\lfloor n^{1-a}\rfloor\rfloor}{2\lfloor n^{1-a}\rfloor}\geq \frac{n}{2}\Big(t-\frac{1}{\lfloor n^{1-a}\rfloor}\Big)\geq \frac{n}{2}(t-2n^{a-1}).	
		\end{aligned}
	\end{equation*}
	Now observe, that the function \(G(t)=(t-2n^{a-1})/2\) induces the finite measure \(\d t /2\) on \((2n^{a-1},1)\), which is non-empty as \(n>2^{1/(1-a)}\). This particularly implies for all non-increasing functions \(\varphi(t)\) and an \(a\)-regular realisation \(u=(u_1,\dots,u_n)\)
	\begin{equation*}
		\begin{aligned}
			\sum_{j=1}^n \varphi(u_j) 
			& 
				\geq n \int_{0}^{\varphi(2n^{a-1})} \d x \, F(\varphi^{-1}(x)) \geq \frac{n}{2}\int_{2n^{a-1}}^1 \d t \, \varphi(t),
		\end{aligned}
	\end{equation*}
	where \(\varphi^{-1}\) denotes the generalised inverse of \(\varphi\). This concludes the proof. 
\end{proof}

	We have now everything in place in order to prove Theorem~\ref{thm:WDRCM}.
		
	\begin{proof}[Proof of Theorem~\ref{thm:WDRCM}, subject to Theorem~\ref{thm:gWDRCM}.] 
	Let us start with the proof of Part~(i) and assume \(\zeta<0\).
 Note that Theorem~\ref{thm:gWDRCM} Part~(i) (that is independently proved below) implies 
	\[
		\limsup_{r\to\infty}\frac{\log \P_1(\cL(r,1))}{\log r}\leq {d\zeta}. 
	\]
	As the WDRCM is monotone and takes place in the independent setting, we have \(\wlc>0\) by Theorem~\ref{thm:equiStatement}. Let us assume that we had already proved the corresponding lower bound 
	\begin{equation}\label{eq:lowerBoundPL}
		\limsup_{r\to\infty}\frac{\log\P_1(\cL(r,1))}{\log r} \ge d\zeta.
	\end{equation}
	Then, for all \(\lambda<\wlc\) by Theorem~\ref{thm:gWDRCM} and Lemma~\ref{lem:PropL}
	\[
		\limsup_{r\to\infty}\frac{\log\P_\lambda(\cE(r))}{\log\P_1(\cL(r,1))}\leq 1,
	\]
	where \(\cE(r)\) again denotes the annulus-crossing event as introduced in~\eqref{eq:eventAnnulusCross}. As \(\P_1(\cL(r,1))\asymp\P_\lambda(\cL(r,3))\leq \P_\lambda(\cE(r))\) by Lemma~\ref{lem:PropL}, we find the corresponding lower bound for the \(\limsup\) of the long edge-probability and have thus established the claimed polynomial decay of the annulus-crossing probability. It therefore remains to verify the lower bound~\eqref{eq:lowerBoundPL}. To this end, we fix the intensity \(\lambda=1\) and consider the event 
	\[
		\bar{\cL}(r) = \big\{\exists \x\sim\y\colon x\in B(r), y\in B(3r)\setminus B(2r)\big\} \subset \cL(r,1).
	\] 
	We hence aim to show that for all \(\mu<\zeta\) arbitrarily close to \(\zeta\) and sufficiently large \(r\), we have
    \[
        \log\P_1(\bar{\cL}(r)) \geq  d\mu\log r.
    \]
    Fix such \(\mu<\zeta\). We provide the claimed lower bound for the probability of $\bar{\cL}(r)$ by only considering connections between two vertices with marks greater than $r^{-d+d\mu}$. Recall the integral
    \[
        I(\mu, r)= r^{2d}\int_{r^{-d+d\mu}}^1 \d u  \int_{r^{-d+d\mu}}^1  \d v \, \varphi(u,v,r^d).
     \] 
   By our definition of $\psi$ in~\eqref{eq:psi} and our assumption on \(\mu\), we have 
   \[
       \limsup_{r\to\infty} \frac {\log I(\mu,r)}{d\log r}= 2+\psi(\mu)>\mu.
   \]
   We let $\eps:= (2+\psi(\mu)-\mu)/3>0$ and split the integral according to the domain of integration into
   \[
       I(\mu,r)=I^{(1)}(\mu,r)+ I^{(2)}(\mu,r)+ I^{(3)}(r),
   \]
   where the integral $I^{(1)}$ runs over the domain $\{r^{-d+d\mu}<u<r^{-d+d\eps}, u<v<1\}$, $I^{(2)}$ over the domain $\{r^{-d+d\mu}<v<r^{-d+d\eps}, v<u<1\}$, and $I^{(3)}$ over the domain $\{u,v\in [r^{-d+d\eps},1]\}$. Let us recall here that \(\mu<0\) while \(\varepsilon>0\) to avoid confusion. Hence, by the Laplace principle, for at least one $j\in\{1,2,3\}$, we must have       
   \begin{equation}\label{eq:SubintPsiBound}
   		\limsup_{r\to\infty} \frac {\log I^{(j)}(\mu,r)}{d\log r}= 2+\psi(\mu).
   \end{equation}
   We have to consider the three cases associated to which \(j\) satisfies equation~\eqref{eq:SubintPsiBound}. Let us first assume that it holds true for \(j=1\). We can then fix a sequence \(r_n\) diverging to infinity such that \(I^{(1)}(\mu,r_n)\geq r_n^{d\mu+2d\varepsilon}\). We define
   \begin{equation*}
   		\Phi(u) := r_n^d\int_{r_n^{-d+d\mu}}^1  \d v \,  \varphi(u,v,r_n^d)
   \end{equation*}
   and infer
   \begin{equation} \label{eq:lowerBoundContradict}
   		r_n^{d} \int_{r_n^{-d+d\mu}}^{r_n^{-d+d\varepsilon}}\d u \,\Phi(u) \geq I^{(1)}(\mu,r_n)\geq r_n^{d\mu+2d\varepsilon}.
   \end{equation}
    Hence, there must exists \(u_0\in[r_n^{-d+d\mu},r_n^{-d}]\) such that \(u_0 \Phi(u_0)\geq r_n^{-d+d\mu}\). Indeed, if this was not the case, we would infer
    \begin{equation*}
    	\begin{aligned}
    		I^{(1)}(\mu,r_n) \leq r_n^{d\mu} \int_{r_n^{-d+d\mu}}^{r_n^{-d}} \frac{\d u}{u} + r_n^{d\varepsilon}\Phi(r_n^{-d}) \leq r_n^{d\mu}\log r_n^{-d\mu} + r_n^{d\mu+d\varepsilon},
    	\end{aligned}
    \end{equation*} 
    where we used monotonicity of \(\varphi\) in the vertex mark arguments and the assumed bound on \(u\Phi(u)\). This however contradicts~\eqref{eq:lowerBoundContradict} and thus yields the existence of \(u_0\). If there is more than one such \(u_0\), fix any and observe that the probability of finding a vertex located in \(B(r_n)\) with mark at most \(u_0\) is, by standard Poisson process properties together with \(\lambda=1\),
    \[
    	1- \exp\big(-\omega_d r_n^d u_0\big) \geq \tfrac{\omega_d }{2} r_n^d u_0,
    \]	
    for large enough \(n\) (and thus \(r_n\)). Conditioned on the existence of such a vertex, the number of neighbours it has located in \(B(3r_n)\setminus B(2r_n)\) is a Poisson random variable with parameter at least \(\Phi(u_0)\).
  This is immediate from the Poisson structure of neighbourhoods in the WDRCM~\cite{Lue2022} and monotonicity of \(\varphi\) in the vertex marks arguments. Hence, the probability of forming at least one connection from the given vertex to \(B(3r_n)\setminus B(2r_n)\) is lower bounded by
    \[
    	1-\exp\big(-\Phi(u_0)\big) = \Omega(\Phi(u_0)\wedge 1).
    \]
    Hence, we find a constant \(c>0\), depending on model parameters and the dimension only, such that, for all \(\mu<\zeta\) and sufficiently large \(r_n\), we have 
    \[
    	\P_1(\bar{\cL}(r_n))\geq c r_n^d u_0(\Phi(u_0)\wedge 1) = c\big(r_n^d u_0 \Phi(u_0)\big)\wedge \big(r_n^d u_0\big)\geq c r_n^{d\mu},
    \]
    as desired. Note that the case~\eqref{eq:SubintPsiBound} for \(j=2\) follows from the exact same arguments by reversing the roles of \(u\) and \(v\).  
    
   It thus remains to consider the case where~\eqref{eq:SubintPsiBound} is fulfilled for \(j=3\). To this end, recall \(\varepsilon=(2+\psi(\mu)-\mu)/3 \in(0,1)\) introduced to divide the integration domain, where \(\varepsilon<1\) is a result of only considering \(\mu<\zeta\) that are sufficiently close to \(\zeta\). Again, we want to establish the existence of an edge between a vertex in \(B(r)\) and one in \(B(3r)\setminus B(2r)\). For this, let us denote by \(\scrV_1=\{\x_1=(x_1,u_1),\dots,\x_{N_1}=(x_{N_1},u_{N_1})\}\) the vertices of \(\scrV\) restricted to locations in \(B(r)\) and by \(\scrV_2=\{\y_1=(y_1,v_1),\dots,\y_{N_2}=(y_{N_2},v_{N_2})\}\) those vertices with locations in \(B(3r)\setminus B(2r)\). Here, \(N_1\) and \(N_2\) are Poisson random variables with mean of order \(r^d\). Applying the Chernoff bound for Poisson random variables, there are constants \(c',c>0\), such that
	\begin{equation}\label{eq:chernoffPois}
		\P_{1}(N_1< c' r^d) + \P_1(N_2<c' r^d) \leq {\rm e}^{-c r^d}.
	\end{equation} 
	To easy notation, we treat \(c'=1\) and additionally \(r^d\in\N\) in the following and denote by \(c>0\) and \(C>0\) constants that may change from line to line. Further, we may assume without loss of generality that the vertices in \(\scrV_1\) and \(\scrV_2\) may be indexed according to their lexicographic order. We now make use of the previously introduced regularity of uniform random variables applied to the vertex marks in \(\scrV_1\) and \(\scrV_2\). For the parameter \(a=\varepsilon\), we say that \(\scrV_1\) is \(\varepsilon\)-regular, if \(N_1\geq r^d\) and if the collection of vertex marks \(u_1,\dots,u_{r^d}\) is \(\varepsilon\)-regular. Put differently, \(\scrV_1\) is \(\varepsilon\)-regular, if it contains sufficiently many vertices such the the first \(r^d\) indexed vertex marks form a \(\varepsilon\)-regular collection in the above sense. The same definition applies to \(\scrV_2\) by replacing \(N_1\) with \(N_2\) and the marks \(u_i\) with the marks \(v_i\). Applying Lemma~\ref{lem:a-regular} and~\eqref{eq:chernoffPois} yields
	\begin{equation}\label{eq:ConditionEpsReg}
		\begin{aligned}
			\P_1(\neg\bar{\cL}(r)) 
			& 
			\leq \P_{1}\big(\neg\bar{\cL}(r)\cap \{\scrV_1,\scrV_2 \text{ are }\varepsilon\text{-regular}\}\big)+ {\rm e}^{-c r^d} +2 {\rm e}^{-c r^{d\varepsilon}}			\\ &
			\leq \P_{1}\big(\neg\bar{\cL}(r)\cap \{\scrV_1,\scrV_2 \text{ are }\varepsilon\text{-regular}\}\big)+ {\rm e}^{-c r^{d\varepsilon}}, 
		\end{aligned}
	\end{equation}
	for all large enough \(r\), where the last step follows from \(\varepsilon<1\). It remains to bound the probability on the right-hand side. To this end, let us write \(\mathcal{A}=\{\scrV_1,\scrV_2 \text{ are }\varepsilon\text{-regular}\}\). Note that the maximal distance between any two vertices in \(\scrV_1\) and \(\scrV_2\) is bounded by \(6r\). As \(\varphi\) is non-increasing in the third argument, we infer	\begin{equation}\label{eq:longEdgeProofCalc}
		\begin{aligned}
			\P_1(\neg \bar{\cL}(r)\cap\mathcal{A}) 
			& 
			\leq \P_1\Big(\bigcap_{i,j=1}^{r^d}\{\x_i\not\sim\y_j\}\cap \mathcal{A}\Big) 
			\leq \E_1\Big[\1_{\mathcal{A}} \prod_{\substack{i,j=1}}^{r^d} {\rm e}^{-\varphi\big(u_i,v_j,(6r)^d\big)}\Big] 
			\\ & 
			 =\E_1\Big[\1_{\mathcal{A}} \ \exp\Big(-\sum_{i,j=1}^{r^d} \varphi\big(u_i,v_j,(6r)^d\big)\Big)\Big] 
			\\ &
			\leq \P_1(\mathcal{A})\exp\Big(-C r^{2d}\int\limits_{2r^{-d+d\varepsilon}}^{1} \d u \int\limits_{2r^{-d+d\varepsilon}}^{1} \d v \ \varphi\big(u,v,(6r)^d\big) \Big)
			\\ &
			\leq \P_1(\mathcal{A})\exp\big(-C I^{(3)}(r)\big),
		\end{aligned}
	\end{equation}
	where we used Lemma~\ref{lem:a-regular} in the second to last step. To see the last step, recall that \(I^{(3)}(r)\) is of polynomial order with exponent \(2+\psi(\mu)\) by assumption. Therefore, \(I^{(3)}(r)\asymp I^{(3)}(c r)\) for all constants \(c>0\). We ultimately infer, 
	\[
		\begin{aligned}
			\P_1(\bar{\cL}(r,1)) \geq \P_1(\bar{\cL}(r)\cap \mathcal{A}) \ge \P_1(\mathcal{A})\big( 1- \exp(-C I^{(3)}(r))\big)
		\end{aligned} 
    \]
    for sufficiently large \(r\). By taking the logarithms on both sides, using our assumption on \(I^{(3)}\), as well as \(\P_1(\mathcal{A})>1/2\) for large \(r\), we obtain the desired result. This finishes the proof of Part~(i). 

	In order to prove Part~(ii), we only have to show the claimed tail for \(\P_1(\cL(r,1))\) as the other statements are then immediate consequences of Theorem~\ref{thm:equiStatement}. Recall that we now have \(\zeta>0\). The proof works with similar argumentation as the one used in the \(j=3\) case above. Indeed, fix \(0<\mu<\zeta\leq 1\) but arbitrarily close to \(\zeta\) and replace the \(\varepsilon\)-regularity in the event \(\mathcal{A}\) by \(\mu\)-regularity. Then, for any \(\varepsilon>0\) small enough such that \(d(2+\psi(\mu))-\varepsilon>0\), we have for large enough \(r\) that \(I(\mu,r)\geq r^{d(2+\psi(\mu))-\varepsilon}\). We infer with the same calculations performed in~\eqref{eq:longEdgeProofCalc} for large enough \(r\)
	\begin{equation*}
		\begin{aligned}
			\P_1(\neg \bar{\cL}(r)\cap\mathcal{A}) 
			& 
			\leq \P_1(\mathcal{A})\exp\Big(-C r^{2d} \int\limits_{2r^{d\mu-d}}^{1} \d u \int\limits_{2r^{d\mu-d}}^{1} \d v \, \varphi\big(u,v,(6r)^d\big) \Big)
			\\ & 
			\leq\P_1(\mathcal{A}) \exp\big(-C r^{d(2+\psi(\mu))-\varepsilon}\big).
		\end{aligned}
	\end{equation*}
	As \(\varepsilon\) can be chosen arbitrarily small, we can find for each \(\mu<\zeta\) some \(R\geq 1\) and a constant \(C>0\) such that for all \(r>R\), by combining the final calculation with~\eqref{eq:ConditionEpsReg} (but now with respect to \(\mu\)-regularity),
	\[
		\P_1(\cL(r,1))\geq \P_1(\bar{\cL}(r))\geq 1 - \exp(-C r^{d\mu}).
	\]
	This proves the claimed tail and thus Theorem~\ref{thm:WDRCM}. 
	\end{proof}
 
 	We continue with the proof of Proposition~\ref{prop:deffForWDRCM} providing the values of \(\zeta\) for the interpolation model, defined in~\eqref{eq:rho} and~\eqref{eq:interpolKernel}.
	
	\begin{proof}[Proof of Proposition~\ref{prop:deffForWDRCM}.]
		Recall that the connection probability for two given vertices \(\x=(x,u_x)\) and \(\y=(y,u_y)\) is given by \(1\wedge (g(u_x,u_y) |x-y|^d)^{-\delta}\), with \(g(u_x,u_y)=(u_x\wedge u_y)^\gamma (u_x\vee u_y)^\alpha \), in the interpolation model. We aim to derive the values of \(\zeta\). To this end, we calculate the defining integral of \(\psi(\mu)\) in~\eqref{eq:psi}, namely $I(\mu,r)/r^{2d}$. By the symmetry of $g$, this integral is twice the value obtained on the restricted domain in which $u_x\le u_y$. After a change of variables with $t_x=-\log(u_x)/d\log r$ and $t_y=-\log(u_y)/d\log r$, we obtain
		\[
        	\begin{aligned}
        		\frac {I(\mu,r)}{2r^{2d}}
        		&= 
        			(d\log r)^2 \int_R \big(r^{d(-\delta+(\delta \gamma-1) t_x + (\delta \alpha-1)t_y)}\wedge r^{d (-t_x-t_y)}\big) \, \d t_x \d t_y
        		\\ &
        			= (d\log r)^2 \int_R r^{d \chi(t_x,t_y)} \, \d t_x \d t_y,
        	\end{aligned}
        \]
        where the integral runs over the domain $R:=\{(t_x,t_y)\in \R^2, 0\le t_y\le t_x\le 1-\mu\},$ and 
        \[
        	\chi(t_x,t_y):= \min\big\{-\delta+(\delta \gamma-1) t_x + (\delta \alpha-1)t_y, -t_x-t_y\big\}.
        \]
        Note that $\chi(t_x,t_y)=-t_x-t_y$ on the domain $\gamma t_x+\alpha t_y\ge 1$, corresponding to the connection probability between the vertices $\x$ and $\y$ being equal to 1.

        We first consider the case $\mu\ge 1-1/(\alpha+\gamma)$, which yields $\chi(t_x,t_y)=-\delta+(\delta \gamma-1) t_x + (\delta \alpha-1)t_y$ in the whole domain $R$. If we further suppose that the terms $\delta \alpha-1, \delta(\alpha+\gamma)-2$ and $\delta \gamma-1$ are all nonzero, then the integral is easily computed to be equal to:
        \[
        	\frac {I(\mu,r)}{2r^{2d}} =\frac {r^{-d\delta}}{\delta \alpha-1}\left( \frac {r^{d(\delta(\alpha+\gamma)-2)(1-\mu)}-1}{\delta(\alpha+\gamma)-2} - \frac {r^{d(\delta\gamma-1)(1-\mu)}-1}{\delta\gamma-1}\right).
        \]
        The largest exponent in $r$ yields the leading term, so we find
        \[
        	\frac {I(\mu,r)}{r^{2d}}\asymp r^{d\psi(\mu)},
        \]
        where
        \[
            \psi(\mu) =\max\big\{-\delta, -\delta+ ((\alpha+\gamma)\delta-2)(1-\mu),-\delta+(\delta \gamma-1)(1-\mu)\big\}.
        \]
        This formula for $\psi(\mu)$ remains true when either of the three denominators $\delta \alpha-1, \delta(\alpha+\gamma)-2$ or $\delta \gamma-1$ 
        is zero. Indeed, in those cases the asymptotic for $I(\mu,r)$ also possibly contains a logarithmic correction, but this does not change the exponent $\psi(\mu)$. It can also be observed that we have
        \begin{equation}\label{formula_psi_chi}
            \psi(\mu)= \sup\big\{\chi(t_x,t_y)\colon (t_x,t_y)\in R\big\},
        \end{equation}
        and our computations above should also provide a good explanation of this equality.

        The case $\mu<1-1/(\alpha+\gamma)$ requires slightly more involved computations but~\eqref{formula_psi_chi} remains true and provides a quick way to compute the exponent $\psi$. If $1-1/\gamma<\mu< 1-1/(\alpha+\gamma)$, we obtain
        \[
        	\begin{aligned}
            	\psi(\mu)
            	&=
            		\max\left\{\chi(0,0),\chi\big(\tfrac{1}{\alpha+\gamma},\tfrac{1}{\alpha+\gamma}\big),\chi(1-\mu,0),\chi\big(1-\mu,\tfrac {1-\gamma(1-\mu)}{\alpha}\big)\right\}
           		\\&
           			=\max\left\{-\delta, -\tfrac{2}{\alpha+\gamma},-\delta+(\delta \gamma-1)(1-\mu), -\tfrac{1}{\alpha} + (\tfrac{\gamma}{\alpha}-1)(1-\mu)\right\}.
        	\end{aligned}
        \] 
        Finally, if $\mu<1-1/\gamma$, we obtain
        \[
        	\begin{aligned}
            	\psi(\mu)
            	&=
            		\max\left\{\chi(0,0),\chi\big(\tfrac{1}{\alpha+\gamma},\tfrac{1}{\alpha+\gamma}\big),\chi\big(\tfrac{1}{\gamma},0\big)\right\}=\max\left\{-\delta,-\tfrac{2}{\alpha+\gamma}, -\tfrac{1}{\gamma}\right\}.
        	\end{aligned}
        \]
        
        We conclude by summarising our results and computing the value of $\zeta$ depending on the parameters $\alpha,\gamma$ and $\delta$. In each case, $\psi$ is continuous in $\mu$ and $\zeta$ is the unique solution in $(-\infty,1]$ of the equation $\zeta=2+\psi(\zeta)$. We distinguish the various cases as shown in Figure~\ref{fig:Deff}.
        
        \begin{description}
        	\item[Case] \(\boldsymbol{\gamma<1/\delta}\) \textbf{and} \(\boldsymbol{\alpha+\gamma<2/\delta.}\)
        		In this case, we have $\psi(\mu)= -\delta$ for all $\mu$. Further solving $\zeta=2+\psi(\zeta)$ yields $\zeta=2-\delta$, and therefore $\zeta>0$ if $\delta<2$, $\zeta=0$ if $\delta=2$, and $\zeta<0$ if $\delta>2$.
        	\item[Case] \(\boldsymbol{\gamma>1/\delta}\) \textbf{and} \(\boldsymbol{\alpha<1/\delta.}\) 
        		Here, we have
        		\begin{equation*}
					\begin{aligned}
						\psi(\mu)=
						\begin{cases}
							-\frac{1}{\gamma}, & \text{ if } \mu<1-\frac 1 {\gamma}, 
							\\
							-\delta+(\delta\gamma-1)(1-\mu), & \text{ if } \mu>1-\frac{1}{\gamma}.
						\end{cases}
					\end{aligned}
				\end{equation*}
        		Solving $\zeta=2+\psi(\zeta)$, yields $\zeta=1-(\delta-1)/\gamma\delta$, which is always larger than $1-1/{\gamma}$. Further, $\zeta>0$ if $\gamma>1-1/\delta,$ $\zeta=0$ if $\gamma=1-1/\delta$, and $\zeta<0$ if $\gamma<1-1/\delta$.
			\item[Case] \(\boldsymbol{1/\delta<\alpha<\gamma}.\) 
				We infer in this case 
				\begin{equation*}
					\begin{aligned}
						\psi(\mu)=
						\begin{cases}
							-\frac{1}{\gamma}, & \text{ if } \mu<1-\frac 1 {\gamma}, 
							\\
                    		-\frac{1}{\alpha} +\left(\frac{\gamma}{\alpha}-1\right)(1-\mu),  & \text{ if } 1-\frac 1 {\gamma}<\mu <1-\frac 1 {\alpha+\gamma},
                    		\\
							-\delta+(\delta(\alpha+\gamma)-2)(1-\mu), & \text{ if } 1-\frac 1 {\alpha+\gamma}<\mu.
						\end{cases}
					\end{aligned}
				\end{equation*}
        		Note that the solution of $\zeta=2+\psi(\zeta)$ is always larger than $1-1/\gamma$. We obtain, 
        		\[
        			\zeta =
        			\begin{cases}
        				\frac{\alpha+\gamma-1}{\gamma}<0, & \text{ if } \alpha+\gamma<1,
        				\\
        				1-\frac{1}{\alpha+\gamma}=0, & \text{ if } \alpha+\gamma=1,
						\\
						\frac{\delta(\alpha+\gamma-1)}{\delta(\alpha+\gamma)-1}>0, & \text{ if } \alpha+\gamma>1.
        			\end{cases}
        		\]
			\item[Case] \(\boldsymbol{\gamma<\alpha}\) \textbf{and} \(\boldsymbol{\alpha+\gamma>2/\delta.}\)
				In this final case, we obtain
				\begin{equation*}
					\begin{aligned}
						\psi(\mu)=
						\begin{cases}
							-\frac{2}{\alpha+\gamma},  & \text{ if } \mu <1-\frac 1 {\alpha+\gamma},
						\\
							-\delta+(\delta(\alpha+\gamma)-2)(1-\mu), & \text{ if } 1-\frac 1 {\alpha+\gamma}<\mu.
						\end{cases}
					\end{aligned}
				\end{equation*}
				Again solving the usual equation, we infer 
				\[
					\zeta =
					\begin{cases}
						\frac {2(\alpha+\gamma-1)}{\alpha+\gamma}<0, & \text{ if } \alpha+\gamma<1, 
						\\
						1-\frac 1{\alpha+\gamma}=0, & \text{ if } \alpha+\gamma=1,
						\\
						\frac{\delta(\alpha+\gamma-1)}{\delta(\alpha+\gamma)-1}>0, & \text{ if } \alpha+\gamma>1.
					\end{cases}
				\]
        \end{description}
    Reorganising these results according to the sign of $\zeta$, we obtain the expression of $\zeta$ provided in the statement of Proposition~\ref{prop:deffForWDRCM}.
    \end{proof}

    Finally, we give the proof of Proposition~\ref{prop:zeta=0}, dealing with the \(\zeta=0\) phase of the interpolation model.
	
	\begin{proof}[Proof of Proposition~\ref{prop:zeta=0}.]
		Consider first \(\delta=2\). Since the interpolation model naturally dominates long-range percolation, we have \(\P_1(\cL(r,1))\asymp 1\) by the argument mentioned in Remark~\ref{rem:thmWDRCM}~(b). For the remaining cases, observe first that, with probability \(1-e^{\omega_d}>0\), there exists a vertex in \(B(r)\) with mark at most \(r^{-d}\). Given such a vertex, its neighbours in \(B(2r)\setminus B(r)\) with marks at least \(r^{-d}\) form a Poisson process. In the case \(\gamma=1-1/\delta\), we may assume \(\alpha=0\) and observe that the mean number of such neighbours is bounded from below by order \(r^{d}r^{-d\delta(1-\gamma)}\asymp 1\) and for \(\gamma<1-1/\gamma\) and \(\alpha=1-\gamma\), the same mean is bounded by
		\[
			r^d r^{-d\delta(1-\gamma)} \int_{r^{-d}}^{1} \d u \ u^{-\delta(1-\gamma)} \asymp 1.
		\]
		In both cases, with constant probability, there exists a vertex of mark at most \(r^{-d}\) that is incident to a long edge. This concludes the proof. 
	\end{proof}

	
	\subsubsection*{Proofs for the generalised WDRCM of Section~\ref{sec:genWDRCM}}
	We turn now to the proofs for the generalised version of the WDRCM introduced in Section~\ref{sec:genWDRCM}. Recall the framework and assumptions explained around~\eqref{eq:varphi} as well as the generalisations of \(\psi_\lambda\) and \(\zeta_\lambda\). 
	
	\begin{proof}[Proof of Theorem~\ref{thm:gWDRCM}.] Let us start with the proof of Part~(i). Thus we assume \(\zeta_\lambda<0\) for some \(\lambda>0\). Choose some \(\zeta_\lambda/2>\mu>\zeta_\lambda\) arbitrarily close to \(\zeta_\lambda\). We aim for an appropriate bound for the probability of \(\cL(r,3)\). To this end, define
		\[
			\widetilde{\cL}(r):=\widetilde{\cL}(r,\mu) =\big\{\exists \ \x\sim \y\colon x\in B(r)\text{ and }y\in B(2r)^\textsf{c}, u_x\wedge u_y\geq |y|^{-d(1+\mu)}\big\}
		\]
		and note that
		\[
			\cL(r,3)\subset \widetilde{\cL}(r)\cup\big\{\exists \y\in\scrV\colon y\in B(r)^\textsc{c}, u_y<|y|^{-d(1+\mu)}\big\} \cup \big\{\exists \x\in\scrV\colon x\in B(r), u_x<r^{-d(1+\mu)} \big\}.
		\]
		By standard properties of the Poisson point process, the latter two events on the right-hand side have probabilities of order
		\begin{equation*}
			\lambda \int_{|y|>r}\d y \, |y|^{-d(1+\mu)} = \tfrac{\omega_d}{|\mu|} \lambda r^{d\mu} \leq \tfrac{\omega_d}{|\mu|\wedge 1}\lambda r^{d\mu}\  \text{ and } \ \lambda\int_{|x|<r} \d x \, r^{d(1+\mu)} = \omega_d \lambda r^{d\mu}\leq \tfrac{\omega_d}{|\mu|\wedge 1}\lambda r^{d\mu}.
		\end{equation*}	
		Let us bound next the probability of \(\widetilde{\cL}(r)\). Applying Mecke's equation~\cite{LastPenrose2017} and~\eqref{eq:varphi}, we obtain 
		\begin{equation*}
			\begin{aligned}
				\P_\lambda(\widetilde{\cL}(r)) 
				& 
				\leq \E_\lambda\Big[\sum_{\x: x\in B(r)}\sum_{\y: y\in B(2r)^\textsc{c}} \1_{\{u_x\wedge u_y >|y|^{-d(1+\mu)}\}}\mathbf{p}(\x,\y,\scrV\setminus\{\x,\y\})\Big] 
				\\ &
				= \lambda^2 \int\limits_{|y|>2r}\d y \int\limits_{|x|<r}\d x \int\limits_{|y|^{-d+d\mu}}^1 \d u_x \int\limits_{|y|^{-d+d\mu}}^1 \d u_y \ \E_\lambda \mathbf{p}(\x,\y,\scrV)
				\\ &
				= \lambda^2 \int\limits_{|y|>2r}\d y \int\limits_{|x|<r}\d x \int\limits_{|y|^{-d+d\mu}}^1 \d u_x \int\limits_{|y|^{-d+d\mu}}^1 \d u_y \ \varphi_\lambda(u_x,u_y,|x-y|^d).
			\end{aligned}
		\end{equation*}
		Now observe that \(|x|\) and \(|x-y|\) are of the same order for large \(r\). Hence, we find a constant \(C>1\) such that for \(\varepsilon>0\) and all sufficiently large \(r\), by definition of \(\psi_\lambda(\mu)\),
		\begin{equation*}
			\begin{aligned}
				\P_\lambda(\widetilde{\cL}(r)) 
				& 
				\leq C \lambda^2 \omega_d r^d \int\limits_{|x|>r}\d x \int\limits_{|x|^{-d+d\mu}}^1 \d u_x \int\limits_{|x|^{-d+d\mu}}^1 \d u_y \ \varphi_\lambda(u_x,u_y,|x|^d)
				\\ &
				\leq C \lambda^2 \omega_d r^d \int\limits_{|x|>r} \d x \, |x|^{d(\psi(\mu)+\varepsilon)}
				\\ & 
				\leq C \omega_d^2 \lambda^2 r^{d(2+\psi(\mu)+\varepsilon)}.
			\end{aligned}
		\end{equation*}
		As \(\varepsilon\) can be chosen arbitrarily small, we can find for all \(\zeta_\lambda/2>\mu>\zeta_\lambda\) some \(R>1\) such that for all \(r>R\)
		\begin{equation}\label{eq:upperBoundPL}
			\P_\lambda(\cL(r,2)) \leq (\lambda\vee \lambda^2)C\big(1 +\tfrac{2}{|\zeta_\lambda|\wedge 1}\big) \, r^{d\mu}.
		\end{equation}
		This proves Part~(i). In order to prove Part~(ii), observe that in case of \(\overline{\zeta}_{\lambda'}<0\), we can take the supremum \(\sup_{\lambda<\lambda'}\) on both sides of~\eqref{eq:upperBoundPL} and note that the \(\lambda\) dependent constant remains finite. This yields the claimed decay of the long event and particularly implies Property~\ref{G:noLongEdgeUnif}\(_{\lambda'}\). As further~\ref{G:mixing}\(_{\lambda'}\) by assumption, \(\wlc>0\) by Theorem~\ref{thm:main}. This proves Part~(ii). Finally, Part~(iii) is a consequence of Part~(i) and Theorem~\ref{thm:Diamter}. 
	\end{proof}

	Before we turn to the proof of Corollary~\ref{corol:BoolInterferences}, recall the soft Boolean model with local interferences, introduced in Section~\ref{sec:BoolInterferences}.
	
	\begin{proof}[Proof of Corollary~\ref{corol:BoolInterferences}]
		We start by proving that the soft Boolean model with local interferences is a generalised WDRCM. That is, we show that the connection rule~\eqref{eq:BoolInferences} satisfies the integration condition~\eqref{eq:varphi}. Firstly, we observe by the translation invariance of the Poisson point process that \(\mathscr{N}^\beta((x,u),\scrV)\) and \(\mathscr{N}^\beta((o,u),\scrV)\) have the same  for all given coordinates \((x,u)\in\R^d\times(0,1)\). Secondly, since \(\mathscr{N}^\beta((o,u),\scrV)\) is Poisson distributed with parameter \(\lambda \omega_d u^{-\beta}\), it is straightforward to deduce
	\begin{equation} \label{eq:inversePois}
		\E_\lambda \Big[\frac{1}{1+\mathscr{N}^{\beta}((o,u),\scrV)}\Big] = \frac{1-e^{-\lambda \omega_d u^{-\beta}}}{\lambda \omega_d u^{-\beta}} \asymp u^{\beta}.
	\end{equation}    
		Therefore, for two given vertices \(\x=(x,u_x)\)	 and \(\y=(y,u_y)\) with \(|x-y|=r\), Condition~\eqref{eq:varphi} is satisfied with
	\[
		\varphi(u_x,u_y,r^d) = \1_{\{u_x<u_y\}} \frac{(1-e^{-\lambda \omega_d u_x^{-\beta}})(1\wedge u_x^{-\gamma\delta}r^{-d\delta})}{\lambda \omega_d u_x^{-\beta}}+\1_{\{u_x\geq u_y\}} \frac{(1-e^{-\lambda \omega_d u_y^{-\beta}})(1\wedge u_y^{-\gamma\delta}r^{-d\delta})}{\lambda \omega_d u_y^{-\beta}}.
	\]
		We show next, that the model has~\ref{G:P_mixing}\(_\lambda^\xi\) with \(\xi = d(1-1/\beta)\) for all \(\lambda>0\). We note initially that the realisations of the underlying Poisson point process in the two disjoint balls are independent. Moreover, a vertex located in \(B(3r)\) can only interfere an internal edge in \(B(xr,3r)\) subject to \(\cG(xr,r)\) if the sphere of interference of said vertex intersects \(B(xr,3r)\). As the two balls are at distance \(cr\), this requires a mark smaller than \((cr)^{-d/\beta}\) (by definition of \(\mathscr{N}^\beta\)). Thus, by symmetry and Markov's inequality, 
	\begin{equation*}
		\begin{aligned}
			\operatorname{Cov}_\lambda(\1_{\cG(r)}, \1_{\cG(xr,r)}) \leq 2\P_\lambda(\exists \x\in B(r)\cap\scrV\colon u_x<(cr)^{-1/\beta}) \leq 2\omega_d \lambda (cr)^{d(1-1/\beta)}.
		\end{aligned}
	\end{equation*}
		Note that \(1-1/\beta-1<0\) as \(\beta<1\). To calculate \(\zeta\) and identify the phase it is negative in, we calculate by making use of~\eqref{eq:inversePois}
	\begin{equation*}
		\begin{aligned}
			n^{-d\delta}\int\limits_{n^{-d+d\mu}}^1 \d u \, u^{-\gamma\delta+\beta} \asymp n^{-d\delta} \vee n^{d(-\delta-(1-\mu)(1-\gamma\delta+\beta))}.
		\end{aligned}
	\end{equation*}  
		Hence, \(\zeta=2-\delta\) and \(\zeta<0\) for \(\delta>2\) if \(\gamma<(1+\beta)/\delta\), as well as 
		\[
			\zeta = \frac{1-\beta-\delta(1-\gamma)}{\gamma\delta-\beta}
		\] 
		otherwise. In the latter case, we have \(\zeta<0\) if \(\delta>2\) and \(\gamma<(\delta+\beta-1)/\delta\). The proof concludes by applying Theorem~\ref{thm:gWDRCM}.	
	\end{proof}

\subsubsection*{Proofs of Section~\ref{sec:correlatedExamples}}
	Next, we prove the results about the examples in Section~\ref{sec:correlatedExamples}. We start with Corollary~\ref{corol:ellipsis} stated in Section~\ref{sec:ellipsis} about ellipses percolation. We invite the readers to familiarise themselves with the model again.
	
	\begin{proof}[Proof of Corollary~\ref{corol:ellipsis}]
		As \(\lc=0\) implies \(\wlc=0\), the second part of the statement is a direct consequence of~\cite[Theorem~1.2]{Teixeira_Ungaretti_2017} where \(\lc=0\) is shown for \(\gamma>1\). For the first part, we can make use of~\cite[Lemma 2]{Hilario-Ungaretti-2021}, which	states
		\[
			\P_\lambda\big(\exists x\sim y\colon x\in B(r), y\in B(2r)^\textsc{c}\big)\leq 1-\exp\big(-\lambda c r^{2(1-1/\gamma)}\big)
		\]
		for some constant \(c>0\) and \(r\geq 1\). As \(\cL(r,2)\) implies the event on the left-hand side and the right-hand side converges to zero for \(\gamma<1\), we have \(\wlc>0\) by Theorem~\ref{thm:equiStatement}, and the claimed decay of the annulus-crossing probability by Theorem~\ref{thm:Diamter}. 
	\end{proof}
	
	We now turn to the proof of Proposition~\ref{prop:Cox} showing that Cox processes introduced in Section~\ref{sec:Cox} can be suitable candidates for correlated underlying point clouds. 
	
	\begin{proof}[Proof of Proposition~\ref{prop:Cox}]
		The fact that the Conditions~\ref{G:Point_process} and~\ref{G:Translation} are satisfied is clear from the construction. First, we verify mixing or more precisely \ref{G:P_mixing}\(_\lambda^\xi\) for all \(\lambda>0\) for \(\xi=2-\beta\). To this end, note that we can write
		\begin{equation*}
			\P_\lambda\big(\mathcal{G}(r)\cap \mathcal{G}(xr,r)\big)=\E[f_r(\Lambda_{B(r)})g_r(\Lambda_{B(xr,r)})],
		\end{equation*}
		where $f_r(\Lambda_{B(r)})=\P_\lambda(\mathcal{G}(r)|\Lambda)$ and $g_r(\Lambda_{B(xr,r)})=\P_\lambda(\mathcal{G}(xr,r)|\Lambda)$, and where we used the independence of the marked Poisson points as well as the fact that the edge-drawing is assumed to be local by~\eqref{eq:CoxLocFinite}. Here, the notation $\Lambda_{B(x,r)}$ represents the fact that $\Lambda$ is evaluated only in the area $B(x,r)$. Now, in order to control the correlation coming from the environment, consider the three events
		\begin{equation*}
			\begin{aligned}
				\mathcal{A}_r &=\{\forall Y_i\in B((1+\varepsilon)r)\colon \rho_i<\varepsilon r\}
				\\
				\mathcal{B}_r&=\{\forall Y_i\in B(xr,(1+\varepsilon)r)\colon \rho_i<\varepsilon r\}
				\\
				\mathcal{C}_r&=\big\{\forall Y_i\in \big(B((1+\varepsilon)r)\cup B((1+\varepsilon)r)\big)^{\textsf{c}} \colon \rho_i< (|Y_i|\wedge |Y_i-yr|)-r\big\},
			\end{aligned}	
		\end{equation*}
		with $\varepsilon=(|x|-2)/2$. Note that, since $\Y=((Y_i,\rho_i):i\in\N)$ is an i.i.d.~marked Poisson point process, the three events are independent as they refer to points in disjoint regions of $\R^2$. Moreover, under the event $\mathcal{A}_r\cap \mathcal{B}_r\cap \mathcal{C}_r$, the random variables $f_r(\Lambda_{B(r)})$ and $g_r(\Lambda_{B(xr,r)})$ are independent. Hence, by translation invariance, Markov's inequality, and Mecke's formula, 
		\begin{equation*}
			\big|\operatorname{Cov}_\lambda\big(\1_{\mathcal{G}(r)}, \1_{\mathcal{G}(xr,r)}\big)\big|\leq c_1(\P_\lambda(\neg\mathcal{A}_r)+\P_\lambda(\neg\mathcal{B}_r)+\P_\lambda(\neg\mathcal{C}_r))\leq c_2r^{2-\beta}, 
		\end{equation*}
		as desired. For the Condition~\ref{G:locallyFinite}, we use Mecke's formula and Markov's inequality to infer
	\begin{equation*}
    	\begin{aligned}
        	\P\lambda(\exists \x\colon \deg(\x)=\infty)
        	&
        		=\lim_{n\uparrow\infty}\P_\lambda(\exists \x\in B(n)\colon \deg(\x)=\infty)
			\\&
				\leq \lim_{n\uparrow\infty}\E_\lambda\Big[\sum_{ X_i\in \X\cap B(n)}\1\{\deg(\x)=\infty\}\Big]
			\\&
			 	= \lambda\lim_{n\uparrow\infty}\E \Big[\int_{B(n)}\Lambda(\d x)\int_0^1 \d u_x \, \P_\lambda^{x,u_x}(\deg(\x)=\infty|\Lambda)\Big]
			\\&
				\leq \lambda\lim_{n\uparrow\infty}\lim_{m\uparrow\infty}\frac{1}{m}\E\Big[\int_{B(n)}\Lambda(\d x)\int \d u_x \, \E_\lambda^{x,r_x}\Big[\sum_{\X \in \scrV}\1\{\X\sim \x\}\Big|\Lambda\Big]\Big]
			\\&
				= \lambda^2\lim_{n\uparrow\infty}\lim_{m\uparrow\infty}\frac{1}{m}\E \Big[\int_{B(n)}\Lambda(\d x)\int \d u_x \int\Lambda(\d y)\int \d u_y \, \p(\x,\y)\Big]
			\\&
				= \lambda^2\lim_{n\uparrow\infty}\lim_{m\uparrow\infty}\frac{1}{m}\int_{B(n)}\d x\int \d u_x \,\int \d y\int \d u_y \, \p(\x,\y)\E[\ell_x\ell_y].
    	\end{aligned}
	\end{equation*}
	Another application of Mecke's formula as well as the normalisation assumption shows $\E[\ell_x\ell_y]\leq 2$ and thus translation-invariance of $\p$ yields 
	\begin{equation*}
    	\begin{aligned}
			\int_{B(n)}&\d x\int \d u_x \int\d y\int \d u_y \, \p(\x,\y)\E[\ell_x\ell_y]\\
			&\le 2\omega_2 n^2\int \d u_o \int\d x\int \d u_x \, \p((o,r_o),(x,r_x)),
    	\end{aligned}
	\end{equation*}
	which is finite by assumption. Hence, $\P(\exists \x\colon \deg(\x)=\infty)=0$.  Finally, the Condition~\eqref{G:monotone} directly follows from the standard monotone coupling for Poisson point processes once we have conditioned on the environment. 
	\end{proof}
	
	Finally, we prove our result about worm percolation stated in Corollary~\ref{corol:FRI}. 
	
	\begin{proof}[Proof of Corollary~\ref{corol:FRI}]
		Recall percolation of worms introduced in Section~\ref{sec:FRI}. Clearly, the model is translation invariant, locally finite, monotone, and always has Property~\ref{G:noLongEdgeUnif}\(_\lambda\) for all \(\lambda>0\) as we restricted ourselves to nearest-neighbour edges only. It therefore remains to proof mixing. In order to do so, we argue similar as for the Boolean model with local interference and infer
		\begin{equation*}
			\begin{aligned}
				\operatorname{Cov}_\lambda(\1_{\mathcal{G}(r)}, \1_{\mathcal{G}(Bxr,r)}) 
				&
				\leq 2\P_\lambda\big(\exists \x\in B(r)\colon  \text{a worm starting in }x \text{ reaches }B(xr,r)\big) 
				\leq 2\lambda r^d \ell[r,\infty), 
			\end{aligned}
		\end{equation*} 
		which tends to zero as \(r\to\infty\) uniformly in \(\lambda\) whenever \(\ell\) has finite \(d\)-th moment. Thus the model has~\ref{G:mixing}\(_\lambda\) for all \(\lambda\) in this case, and we find \(\wlc>0\) by Theorem~\ref{thm:main}. Let us now assume that the \(d\)-th  moment of \(\ell\) is infinite. We aim to show that at least one worm starting in \(B(r)\) manages to reach \(B(2r)^\textsf{c}\) and an annulus-crossing event thus occurs. To this end, consider the following thinning process. First, we remove each site of \(B(r)\) with probability \(1-\exp(-\nu)\). That is, each remaining site has assigned at least one worm. Exactly one independent worm is started in each of these sites and we keep only those sites whose worms manage to reach \(B(2r)^\textsf{c}\). As the probability of this to happen is decreasing in the distance of the starting site to the target boundary, said probability is bounded from below by \(\P_o^\ell(o\xleftrightarrow[]{} B(2r)^\textsf{c})\). By independence, the number of sites in \(B(r)\) surviving the thinning process is bounded from below by a Binomial random variable with mean of order 
		\begin{equation*}
			\begin{aligned}
				r^d(1-{\rm e}^{-\nu})\P_o^\ell(o\xleftrightarrow[]{} B(2r)^\textsf{c})
				&
				\asymp r^d \sum_{n\geq 2r} \P_o^{\ell=n}\big(\sup_{k\leq n}|S_n|\geq 2r \big)\ell\{n\}
				\asymp r^d \ell[2r,\infty).
			\end{aligned}
		\end{equation*}
		Here, \(S_n\) denotes a simple random walk and \(\P^{\ell=n}_o\) the measure of a random walk, starting in the origin and performing exactly \(n\) steps. The last step follows from the fact that for each fixed \(r\), the probability \(\P_o^{\ell=n}\big(\sup_{k\leq n}|S_n|\geq 2r \big)\) is bounded from zero and tends to one as \(n\to\infty\). As the right-hand side diverges to infinity as \(r\to\infty\), at least one vertex in \(B(r)\) is going to survive the described thinning procedure with probability approaching one. This concludes the proof.
	\end{proof}

\paragraph{Acknowledgement.} 
{BJ and LL gratefully received support by the Leibniz Association within the Leibniz Junior Research Group on \emph{Probabilistic Methods for Dynamic Communication Networks} as part of the Leibniz Competition (grant no.~J105/2020) and by the Deutsche Forschungsgemeinschaft (DFG, German Research Foundation) under Germany's Excellence Strategy -- The Berlin Mathematics Research Center MATH+ (EXC-2046/1, EXC-2046/2, project ID: 390685689) through the project \emph{EF45-3} on \emph{Data Transmission in Dynamical Random Networks}. Moreover, the authors would like to thank an anonymous reviewer whose detailed comments and suggestions on an earlier version by BJ and LL motivated them to join forces with EJ to write this more extensive and improved article that includes many examples some of which suggested in the reviewer's comments.}

\section*{References}
\phantomsection
\addcontentsline{toc}{section}{References}
\renewcommand*{\bibfont}{\footnotesize}
\printbibliography[heading = none]
\end{document}